\newenvironment{breakablealgorithm}
{
		\begin{center}
			\refstepcounter{algorithm}
			\hrule height.8pt depth0pt \kern2pt
			\renewcommand{\caption}[2][\relax]{
				{\raggedright\textbf{\ALG@name~\thealgorithm} ##2\par}%
				\ifx\relax##1\relax 
				\addcontentsline{loa}{algorithm}{\protect\numberline{\thealgorithm}##2}%
				\else 
				\addcontentsline{loa}{algorithm}{\protect\numberline{\thealgorithm}##1}%
				\fi
				\kern2pt\hrule\kern2pt
			}
		}{
		\kern2pt\hrule\relax
	\end{center}
}
\definecolor{customblue}{rgb}{0.85, 0.89, 0.98} 
\theoremstyle{plain}
\newtheorem{theorem}{Theorem}[section]
\newtheorem{lemma}[theorem]{Lemma}
\theoremstyle{definition}
\newtheorem{definition}[theorem]{Definition}
\theoremstyle{remark}
\newtheorem{remark}[theorem]{\textbf{Remark}}
\newenvironment{assumption*}{%
	\noindent\textbf{Assumption.}\itshape%
}{}
\DeclareMathOperator{\stt}{s.t.}
\DeclareMathOperator{\argmax}{argmax}
\DeclareMathOperator{\argmin}{argmin}
\newcolumntype{M}[1]{>{\centering\arraybackslash}m{#1}}
\newcommand{\best}[1]{\textbf{#1}}
\newcommand{\second}[1]{\underline{#1}}
\icmltitlerunning{Best Subset Selection: Optimal Pursuit for Feature Selection and Elimination}
\begin{document}


\twocolumn[
\icmltitle{Best Subset Selection: Optimal Pursuit for Feature Selection and Elimination}



\begin{icmlauthorlist}
\icmlauthor{Zhihan Zhu}{yyy}
\icmlauthor{Yanhao Zhang}{yyy}
\icmlauthor{Yong Xia}{yyy}
\end{icmlauthorlist}

\icmlaffiliation{yyy}{School of Mathematical Sciences, Beihang University, Beijing, China (Email: \{zhihanzhu, yanhaozhang, yxia\}@buaa.edu.cn)}

\icmlcorrespondingauthor{Yong Xia}{yxia@buaa.edu.cn}

\icmlkeywords{Best Subset Selection, Feature Selection and Elimination, Optimal Criteria, Compressed Sensing, Sparse Regression}

\vskip 0.3in
]



\printAffiliationsAndNotice{}  

\begin{abstract}
This paper introduces two novel criteria: one for feature selection and another for feature elimination in the context of best subset selection, which is a benchmark problem in statistics and machine learning. From the perspective of optimization, we revisit the classical selection and elimination criteria in traditional best subset selection algorithms, revealing that these classical criteria capture only partial variations of the objective function after the entry or exit of features. By formulating and solving optimization subproblems for feature entry and exit exactly, new selection and elimination criteria are proposed, proved as the optimal decisions for the current entry-and-exit process compared to classical criteria. Replacing the classical selection and elimination criteria with the proposed ones generates a series of enhanced best subset selection algorithms. These generated algorithms not only preserve the theoretical properties of the original algorithms but also achieve significant meta-gains without increasing computational cost across various scenarios and evaluation metrics on multiple tasks such as compressed sensing and sparse regression.
\end{abstract}

\vspace{-7mm}

\section{Introduction}
Subset selection is a classic topic in statistics and machine learning, with significant applications in feature selection \cite{kohavi1997wrappers, 10.5555/3104482.3104615}, sparse regression \cite{miller2002subset, das2018approximate}, compressed sensing \cite{chen2001atomic}, maximum coverage \cite{feige1998threshold} etc. Even in the era of large models, subset selection can effectively reduce training costs and enhance the instruction-following ability of large language models (LLMs) by selecting high-quality features \cite{wang2024survey}.

The fundamental multivariate linear regression model with coefficient vector $\boldsymbol{\beta}\in \mathbb{R}^{p \times 1}$ is expressed as follows:
\begin{equation} 
	\mathbf{y} = \mathbf{X} \boldsymbol{\beta} + \boldsymbol{\epsilon}, 
	\end{equation}
where $\mathbf{y} \in \mathbb{R}^{n \times 1}$ represents the response vector, $\mathbf{X} \in \mathbb{R}^{n \times p}$ is the design matrix, and $\boldsymbol{\epsilon} \in \mathbb{R}^{n \times 1}$ denotes the measurement noise. 
In subset selection, simpler models with fewer predictors are often favored. The goal is to select a subset of features by identifying nonzero coefficients (i.e., Active / Support Set $S$)  in $\boldsymbol{\beta}$  that achieves a balance between accuracy and model simplicity. This leads to the best-subset selection problem, which can be formulated as follows:
\begin{equation}
	\min_{\boldsymbol{\beta} \in \mathbb{R}^p} \mathcal{L}_n\left(\boldsymbol{\beta}\right)\triangleq \frac{1}{2n} \|\mathbf{y} - \mathbf{X} \boldsymbol{\beta}\|_2^2 \quad \stt \|\boldsymbol{\beta}\|_0 \leq K, \label{P}
\end{equation}
where $K$ is maximum allowed sparsity level. Since problem \eqref{P} is  NP-hard \cite{davis1997adaptive}, significant efforts have been directed toward developing polynomial-time approximation algorithms \cite{article, qian2015subset, wei2015submodularity, qian2017subset, zhu2020polynomial}.

Relaxation-based methods have been proposed to approximately solve \eqref{P} by replacing $\ell_0$ penalty with smooth approximations. Examples include  Least Absolute Shrinkage and Selection Operator (LASSO) \cite{tibshirani1996regression}, Adaptive LASSO \cite{zou2006adaptive}, Smoothly Clipped Absolute Deviation (SCAD) \cite{fan2001variable},  Minimax Concave Penalty (MCP) \cite{10.1214/09-AOS729}, etc. However, these methods could be computational burdensome \cite{hazimeh2020fast, NEEDELL2009301} and are also difficult to control the number of selected features.

Another widely used class of methods is greedy algorithms, known for their high computational efficiency and simplicity. These methods perform subset selection directly by selecting and eliminating basis based on feature importance. Notably, the criteria for feature selection and elimination in this category are generally consistent, differing only in the underlying combination strategies.


\vspace{-1mm}
\textbf{Correlation-based Selection.} Greedy algorithms typically select features based on their correlation with  residuals, calculated as follows:
\begin{equation}
	\mathbf{r}^k=\mathbf{y}-\mathbf{X}\boldsymbol{\beta}^{k-1}, ~ j^*=\mathop{\argmax}\limits_{j\in S^c}\frac{{\vert \mathbf{r}^k}^T \mathbf{X}_j \vert}{\|\mathbf{X}_j\|_2}, \label{corr-based}
\end{equation}
where $\mathbf{X}_j$ is the $j$-th column of $\mathbf{X}$,  \( S^c \) is the complement of  support \( S \), $\boldsymbol{\beta}^{k-1}$ denotes the updated coefficient on $S$, and $\mathbf{r}^k$ represents the residual at step $k$. Representative methods include Matching Pursuit (MP) \cite{mallat1993matching}, Orthogonal Matching Pursuit (OMP) \cite{pati1993orthogonal}, CoSaMP \cite{NEEDELL2009301}, Least Angle Regression (LARS) \cite{efron2004least} and Adaptive Best-Subset Selection (ABESS) \cite{zhu2020polynomial}, commonly used in compressed sensing and sparse regression. 
 
\textbf{Wald-T Test Statistics-based Elimination.} Feature elimination often relies on the absolute value of Wald-T test statistics, defined as (here we assume the columns of $\mathbf{X}$ are centralized with zero mean for convenience):
\begin{equation}
	\vert T_j\vert =\frac{\vert {\boldsymbol{\beta}}_j^{k-1}\vert}{M_{{\boldsymbol{\beta}}_j^{k-1}}}, ~ \text{where}~ M_{{\boldsymbol{\beta}}_j^{k-1}}=\frac{\|\mathbf{r}^k\|/\sqrt{df}}{\sqrt{\mathbf{X}_j^T \mathbf{X}_j}}, ~j\in S,\label{wald T}
\end{equation}
where $df$ serves as degree of freedom. Elimination is often based on minimizing the T-statistic or setting a threshold for deletion. Algorithms such as Iterative Hard Thresholding (IHT) \cite{blumensath2009iterative}, Hard Thresholding Pursuit (HTP) \cite{foucart2011hard}, CoSaMP \cite{NEEDELL2009301} and Adaptive Best-Subset Selection (ABESS) \cite{zhu2020polynomial} employ this criterion to remove features.

In general, greedy methods can be regarded as a combination of correlation-based selection and T-statistic-based elimination, with different strategies integrated to perform subset selection.
However, criteria \eqref{corr-based} and \eqref{wald T}, according to their formulas, focus solely on the individual significance of features, neglecting their interaction with other features. A feature that appears important within the current active set might become less significant when the active set changes, and conversely, a feature deemed less critical could gain importance under a different active set configuration. However, the current criteria fail to capture these dynamic properties.

In this paper, we revisit two classical criteria from an optimization perspective. By precisely modeling the selection and elimination subproblems, we reveal that the existing classical criteria \eqref{corr-based} and \eqref{wald T} merely represent partial (one-step) variation of objective function after the entry or exit of features, arising from solving the subproblems by block coordinate descent method. Under this perspective, two novel importance criteria are proposed by solving the subproblems exactly. The new criteria consider both the significance of the features and their interactions with other features in the current active set simultaneously, with which the feature entry-exit strategies are proved as optimal decisions in the current subset selection process. Substituting the traditional criteria \eqref{corr-based} and \eqref{wald T} with the proposed ones generates a series of enhanced best subset selection algorithms, which preserves their desirable theoretical properties (or even better) while achieving significant meta-gains across a variety of tasks, scenarios and evaluation metrics.


\section{Revisit Two Classical Criteria} \label{revisit}

In this chapter, we revisit two classical criteria \eqref{corr-based} \eqref{wald T} from 
an optimization perspective and develop a unified optimization model to uncover their fundamental characteristics.

The following viewpoints originate from ABESS \cite{zhu2020polynomial, zhu2022abess}, which defined two types of sacrifices to measure the variation of objective  $\mathcal{L}_n\left(\boldsymbol{\beta}\right)\triangleq \frac{1}{2n} \|\mathbf{y} - \mathbf{X} \boldsymbol{\beta}\|_2^2$:

1)  Forward Sacrifice: 
For any \( j \in S^c \), the magnitude of adding variable \( j \) is defined as:  
\begin{align}
	\eta_j &= \mathcal{L}_n(\hat{\boldsymbol{\beta}}) - \mathcal{L}_n(\hat{\boldsymbol{\beta}} + \mathbf{D}^j \hat{\mathbf{t}}) \notag \\
	&= \frac{1}{2n} \left( \|\mathbf{y} - \mathbf{X}\hat{\boldsymbol{\beta}}\|_2^2 - \|\mathbf{y} - \mathbf{X}\hat{\boldsymbol{\beta}} - \mathbf{X}_j \hat{\mathbf{t}}_j\|_2^2 \right) \notag \\
	&= \frac{\mathbf{X}_j^T \mathbf{X}_j}{2n} \left( \frac{\hat{d}_j}{\mathbf{X}_j^T \mathbf{X}_j / n} \right)^2, \label{forward sacri}
\end{align}
where  \( \mathbf{D}^j \) is defined as a diagonal matrix with $1$ at the \( (j, j) \)-th entry, while all other entries are $0$, $\hat{d} = \frac{\mathbf{X}^T (\mathbf{y} - \mathbf{X}\hat{\boldsymbol{\beta}})}{n}, \text{and} ~ \hat{\mathbf{t}} = \argmin_{{\mathbf{t}}} \mathcal{L}_n(\hat{\boldsymbol{\beta}} + \mathbf{D}^j {\mathbf{t}})$. It evaluates which features outside active set are significant.

2) Backward Sacrifice: 
For any \( j \in S \), the magnitude of removing variable \( j \) is defined as:  
\begin{align}
	\xi_j &= \mathcal{L}_n(\hat{\boldsymbol{\beta}} - \mathbf{D}^j \hat{\boldsymbol{\beta}}) - \mathcal{L}_n(\hat{\boldsymbol{\beta}}) \notag \\
	&= \frac{1}{2n} \left( \|\mathbf{y} - \mathbf{X}\hat{\boldsymbol{\beta}} + \mathbf{X}_j \hat{\boldsymbol{\beta}}_j \|_2^2 - \|\mathbf{y} - \mathbf{X}\hat{\boldsymbol{\beta}}\|_2^2 \right) \notag \\
	&= \frac{\mathbf{X}_j^T \mathbf{X}_j}{2n} (\hat{\boldsymbol{\beta}}_j)^2,\label{back sacri}
\end{align}
which quantifies the  irrelevant features in the support set.

Intuitively, for \( j \in S \) (or \( j \in S^c \)), a large value of \( \xi_j \) (or \( \eta_j \)) suggests that the \( j \)-th variable is potentially important. Formally, we demonstrate the connection between \eqref{forward sacri} \eqref{back sacri}   and \eqref{corr-based} \eqref{wald T}   in the following remark. This equivalence can be immediately verified by substituting 
\(\mathbf{r}^k \) in \( \hat{d} \).

\begin{remark} 
	The criteria in \eqref{forward sacri} and \eqref{back sacri} are strictly equivalent (up to a constant factor) to those in \eqref{corr-based} and \eqref{wald T}.
\end{remark}

From an optimization standpoint, the existing criteria can be interpreted as the variation of objective function achieved by updating the support set with fixed coefficients in the first step of block coordinate descent. Specifically:

\textbf{Step 1 (Support update with fixed coefficient):} Maximize the correlation in \eqref{corr-based} (or minimize the T-statistics in \eqref{wald T}) for support set updating is equivalent to solving \eqref{P0} (or \eqref{Q0}). The classical criteria \eqref{corr-based} and \eqref{wald T} corresponds to the variation of objective function value in this part.

\textbf{Step 2 (Coefficient update with fixed support):} It is followed by refining the coefficients on the updated support set. This step also leads to a change in the function value, which, however, is not captured by the classical criteria.
 \begin{figure*}[t]
		\vskip -0.04in
	\begin{center}
		\centerline{\includegraphics[width=1.96\columnwidth]{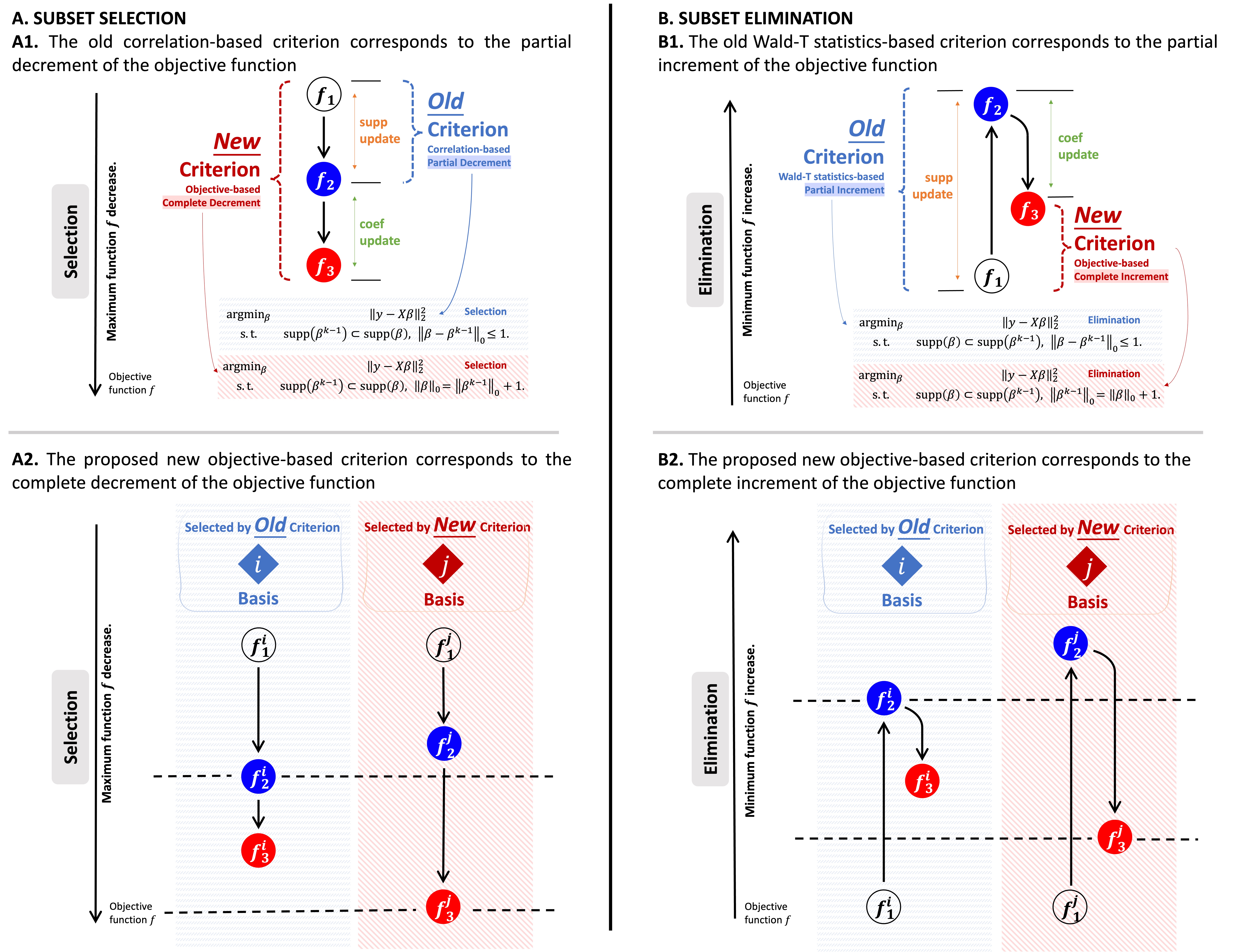}}
		\vspace{-2mm}
		\caption{
	\textbf{A} (\textbf{B}) illustrates subset selection (elimination), with \textbf{A1} (\textbf{B1}) comparing objectives and optimization problems of the old and new criteria, and \textbf{A2} (\textbf{B2}) comparing function decreases (increases). }
		\label{old_vs_new}
	\end{center}
	\vskip -0.35in
\end{figure*}
\begin{align}
	\mathop{\argmin}\limits_{\boldsymbol{\beta}} &~~~~~~~~~~~~~~~~~~~~~~~ \| \mathbf{y} - \mathbf{X}{\boldsymbol{\beta}}\|_2^2 \label{P0} \tag{P0} \\
	\stt ~~~~~ &~\|\boldsymbol{\beta}-\boldsymbol{\beta}^{k-1}\|_0\le 1, ~
	\text{supp}(\boldsymbol{\beta}^{k-1})\subset \text{supp}\left(\boldsymbol{\beta}\right).\notag
\end{align}
\begin{align}
\mathop{\argmin}\limits_{\boldsymbol{\beta}} &~~~~~~~~~~~~~~~~~~~~~~~ \| \mathbf{y} - \mathbf{X}{\boldsymbol{\beta}}\|_2^2 \label{Q0} \tag{Q0} \\
	\stt ~~~~~ &~\|\boldsymbol{\beta}-\boldsymbol{\beta}^{k-1}\|_0\le 1, ~
	\text{supp}(\boldsymbol{\beta})\subset \text{supp}(\boldsymbol{\beta}^{k-1}).\notag
\end{align}
As the model evolves to \eqref{P0} and \eqref{Q0}, the limitations of classic criteria become more apparent. Specifically, it is observed that the constraints in \eqref{P0} and \eqref{Q0} (or equivalently, criteria \eqref{corr-based} and \eqref{wald T}) only allows one change in the support set of \(\boldsymbol{\beta}\) while the coefficients on the remaining support set are fixed. In the next step when coefficients are updated on newly selected support set, the influence of newly chosen feature on the remaining coefficients is not considered. The variation of function value in the first part measures the individual significance of the features, while the change in the second part assesses the interaction between features, where classical criteria fail to capture.

Therefore, the update strategy in \eqref{P0} and \eqref{Q0} (or criteria \eqref{corr-based} and \eqref{wald T}) can be understood as the objective of performing one step of block coordinate descent, rather than an objective that takes into account the overall descent.

This issue will be  addressed in the new model we develop in Section \ref{optimal}, which ensures an optimal solution at each step.

\vspace{-2mm}

\section{Optimal Selection and Elimination Statistics} \label{optimal}
\subsection{Optimal Selection and Elimination Problem}
As analyzed in Section \ref{revisit}, to obtain the optimal solution at each step, we consider the following optimization problems:
\begin{align}
	&\mathop{\argmin}\limits_{\boldsymbol{\beta}}~~~~~~~~~~~~~~~~~~~  \| \mathbf{y} - \mathbf{X}{\boldsymbol{\beta}}\|_2^2 \label{P1} \tag{P1} \\
	&	\stt   ~
 \|\boldsymbol{\beta}\|_0=\|\boldsymbol{\beta}^{k-1}\|_0+ 1, ~ 	\operatorname{supp}(\boldsymbol{\beta}^{k-1})\subset \operatorname{supp}\left(\boldsymbol{\beta}\right),\notag
\end{align}
\begin{align}
	&\mathop{\argmin}\limits_{\boldsymbol{\beta}}~~~~~~~~~~~~~~~~~~~  \| \mathbf{y} - \mathbf{X}{\boldsymbol{\beta}}\|_2^2 \label{Q1} \tag{Q1} \\
	&	\stt   ~
	 \|\boldsymbol{\beta}\|_0=\|\boldsymbol{\beta}^{k-1}\|_0- 1, ~ \operatorname{supp}(\boldsymbol{\beta})\subset \operatorname{supp}(\boldsymbol{\beta}^{k-1}),\notag
\end{align}
where \eqref{P1} and \eqref{Q1} correspond to selection and elimination subproblems for each step exactly. Unlike the constraint in \eqref{P0}, the constraint in \eqref{P1} does not require the coefficients fixed on the remaining support set. Thus, compared to the incomplete descent obtained by one step of block coordinate descent with the classic criterion, the new optimization problem consider a complete descent (including both step 1\&2), providing the optimal feature selection criterion.  Same argument holds for the subset elimination problem in \eqref{Q1}.

Figure \ref{old_vs_new} provides an intuitive comparison between the later-derived new criteria (from \eqref{P1} \eqref{Q1}) and the classic criteria \eqref{corr-based} \eqref{wald T} (from \eqref{P0} \eqref{Q0}), highlighting their differences in terms of objectives. In the selection subproblem, the goal is to select a feature that maximizes the overall decrease in the objective function, as illustrated in \textbf{A1}. However, the classic criterion focuses solely on maximizing the immediate decrease after updating the support set, without accounting for the impact of subsequent coefficient updates. This limitation is evident in \textbf{A2}, where the feature \(i\), selected by  classic criterion, achieves the largest initial decrease \(f_2^i\), but its subsequent updates result in less favorable outcomes.  

Similarly, in the elimination subproblem, the objective is to eliminate a feature that minimizes the overall increase in the objective function. As shown in \textbf{B2}, the classic criterion selects basis \(i\) because it results in the smallest immediate increase in the first step. However, after coefficient updates, the overall increase is much larger than that achieved by basis \(j\), selected by new criterion.

\vspace{-2mm}

\subsection{Optimal Selection and Elimination Criteria} \label{OP-metrics}
In this section, we solve \eqref{P1} and \eqref{Q1} to derive the optimal criteria for subset selection and elimination. 
\vspace{-2mm}
\subsubsection{Optimal Selection Criterion}
We begin with deriving the optimal selection criterion.
\begin{lemma}\label{problemP2}
	Problem \eqref{P1}
	is equivalent to
	\begin{align}
		&\mathop{\argmax}\limits_{\boldsymbol{\beta}}~~~~~~~~~~~~~  \mathbf{y}^T \mathbf{X}_S\left(\mathbf{X}_S^T \mathbf{X}_S\right)^{-1}\mathbf{X}_S^T\mathbf{y} \label{P2} \tag{P2} \\
		&	\stt   
		\operatorname{supp}(\boldsymbol{\beta}^{k-1})\subset S= \operatorname{supp}\left(\boldsymbol{\beta}\right), ~\|\boldsymbol{\beta}\|_0=\|\boldsymbol{\beta}^{k-1}\|_0+ 1,\notag
	\end{align}
	where $S$ is the support set of $\boldsymbol{\beta}$, and \(\mathbf{X}_S\) represents the columns of \(\mathbf{X}\) corresponding to the subset \(S\).
\end{lemma}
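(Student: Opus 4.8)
The plan is to split the minimization in \eqref{P1} into an outer choice of support and an inner least-squares fit. Every $\boldsymbol{\beta}$ feasible for \eqref{P1} has a support set $S$ with $\operatorname{supp}(\boldsymbol{\beta}^{k-1})\subset S$ and $|S|=\|\boldsymbol{\beta}^{k-1}\|_0+1$, and conversely each such $S$ is the support of some feasible point; hence the optimal value of \eqref{P1} equals $\min_{S}\ \min_{\operatorname{supp}(\boldsymbol{\beta})\subseteq S}\ \|\mathbf{y}-\mathbf{X}_S\boldsymbol{\beta}_S\|_2^2$, where the inner problem is an ordinary (unconstrained) least-squares problem in the free coefficients $\boldsymbol{\beta}_S$ on $S$.

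Next I would solve the inner problem. Under the standing full-column-rank assumption on $\mathbf{X}_S$ (which holds in the regime $|S|\le K\le n$ with $\mathbf{X}$ in general position, and is what makes $(\mathbf{X}_S^T\mathbf{X}_S)^{-1}$ well defined), the minimizer is $\boldsymbol{\beta}_S=(\mathbf{X}_S^T\mathbf{X}_S)^{-1}\mathbf{X}_S^T\mathbf{y}$, and substituting it back gives optimal value $\|\mathbf{y}-P_S\mathbf{y}\|_2^2$ with $P_S:=\mathbf{X}_S(\mathbf{X}_S^T\mathbf{X}_S)^{-1}\mathbf{X}_S^T$. Since $P_S$ is the orthogonal projector onto the column space of $\mathbf{X}_S$, it satisfies $P_S^T=P_S=P_S^2$, so expanding the square collapses the cross terms and yields $\|\mathbf{y}-P_S\mathbf{y}\|_2^2=\|\mathbf{y}\|_2^2-\mathbf{y}^T\mathbf{X}_S(\mathbf{X}_S^T\mathbf{X}_S)^{-1}\mathbf{X}_S^T\mathbf{y}$.

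Finally, plugging this into the outer minimization and discarding the constant $\|\mathbf{y}\|_2^2$, minimizing over the admissible supports is equivalent to maximizing $\mathbf{y}^T\mathbf{X}_S(\mathbf{X}_S^T\mathbf{X}_S)^{-1}\mathbf{X}_S^T\mathbf{y}$ over those supports, which is exactly \eqref{P2}; the optimizer of \eqref{P1} is then recovered from the maximizing support $S$ by setting $\boldsymbol{\beta}_S=(\mathbf{X}_S^T\mathbf{X}_S)^{-1}\mathbf{X}_S^T\mathbf{y}$ and all other entries to zero. The computation is routine; the only points that need care are making explicit the full-rank hypothesis that legitimizes the inverse, and noting the mild abuse of notation in \eqref{P2}, where $\argmax_{\boldsymbol{\beta}}$ is really an optimization over $S=\operatorname{supp}(\boldsymbol{\beta})$, since the displayed objective depends on $\boldsymbol{\beta}$ only through its support.
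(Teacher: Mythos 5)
Your proposal is correct and follows essentially the same route as the paper's own proof: substitute the least-squares solution on each admissible support into \eqref{P1}, observe that the residual norm equals $\|\mathbf{y}\|_2^2$ minus the quadratic form $\mathbf{y}^T\mathbf{X}_S(\mathbf{X}_S^T\mathbf{X}_S)^{-1}\mathbf{X}_S^T\mathbf{y}$, and drop the constant to turn the minimization into the maximization \eqref{P2}. The paper states this more tersely, while you make explicit the full-column-rank hypothesis and the projection identity, which is a reasonable elaboration rather than a different argument.
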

The proof of Lemma \ref{problemP2} is detailed in Appendix \ref{App: p1=p2}. 
Thus, problem \eqref{P1} is reformulated as \eqref{P2}. A major challenge in solving \eqref{P2} is computing the inversion term \(\left(\mathbf{X}_S^T \mathbf{X}_S\right)^{-1}\). To address this, we first introduce the following lemma.

 \begin{lemma}[Forward Inverse]\label{For Inv}
	Let $t=\alpha-\mathbf{v}^T \mathbf{A}^{-1} \mathbf{u} \neq 0$,  $\mathbf{B}=\left(\begin{array}{cc}\mathbf{A}_{(n-1)\times (n-1)} & \mathbf{u} \\ \mathbf{v}^T & \alpha\end{array}\right)_{n \times n}$, where $\mathbf{A}^{-1}$ is known in advance. Then the inverse of $\mathbf{B}$ is given by:
	\begin{equation}
		\mathbf{B}^{-1}=\left(\begin{array}{cc}
			\mathbf{A}^{-1}+\mathbf{A}^{-1} \mathbf{u} t^{-1} \mathbf{v}^T \mathbf{A}^{-1} & -\mathbf{A}^{-1} \mathbf{u} t^{-1} \\
			-t^{-1} \mathbf{v}^T \mathbf{A}^{-1} & t^{-1}
		\end{array}\right).
	\end{equation}
\end{lemma}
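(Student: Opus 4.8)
The statement is the classical $2\times 2$ block (Schur-complement) inversion formula, specialized to the case where the trailing block is the scalar $\alpha$, so I would prove it by direct verification rather than by appealing to any external identity. First I would note that, since the bottom-right block is a scalar, the Schur complement of $\mathbf{A}$ in $\mathbf{B}$ is exactly the scalar $t=\alpha-\mathbf{v}^T\mathbf{A}^{-1}\mathbf{u}$; combined with the invertibility of $\mathbf{A}$ and the hypothesis $t\neq 0$, this already guarantees that $\mathbf{B}$ is invertible (for instance via $\det\mathbf{B}=\det\mathbf{A}\cdot t$), so it suffices to exhibit a right inverse and then invoke that a square matrix with a right inverse is invertible.

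One clean way to present the construction, making the closed form transparent, is the block $LDU$ factorization
\[
\mathbf{B}=\begin{pmatrix}\mathbf{I}&\mathbf{0}\\ \mathbf{v}^T\mathbf{A}^{-1}&1\end{pmatrix}\begin{pmatrix}\mathbf{A}&\mathbf{0}\\ \mathbf{0}&t\end{pmatrix}\begin{pmatrix}\mathbf{I}&\mathbf{A}^{-1}\mathbf{u}\\ \mathbf{0}&1\end{pmatrix},
\]
which one checks by multiplying the three factors and substituting $t=\alpha-\mathbf{v}^T\mathbf{A}^{-1}\mathbf{u}$ in the trailing entry. Each factor is immediately invertible: the two unit triangular factors invert by negating their off-diagonal block, and the middle factor inverts to $\operatorname{diag}(\mathbf{A}^{-1},t^{-1})$. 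Multiplying the three inverses in reverse order then collapses to the stated expression for $\mathbf{B}^{-1}$.

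For the actual write-up I would probably favour the shorter route of computing $\mathbf{B}\,\mathbf{B}^{-1}$ block by block, with $\mathbf{B}^{-1}$ taken to be the claimed matrix: the $(1,2)$ block is $-\mathbf{A}\mathbf{A}^{-1}\mathbf{u}t^{-1}+\mathbf{u}t^{-1}=\mathbf{0}$; the $(2,2)$ block is $-\mathbf{v}^T\mathbf{A}^{-1}\mathbf{u}\,t^{-1}+\alpha t^{-1}=t\cdot t^{-1}=1$; the $(2,1)$ block is $\mathbf{v}^T\mathbf{A}^{-1}+\mathbf{v}^T\mathbf{A}^{-1}\mathbf{u}\,t^{-1}\mathbf{v}^T\mathbf{A}^{-1}-\alpha t^{-1}\mathbf{v}^T\mathbf{A}^{-1}=\mathbf{v}^T\mathbf{A}^{-1}-\mathbf{v}^T\mathbf{A}^{-1}=\mathbf{0}$ after using $\alpha-\mathbf{v}^T\mathbf{A}^{-1}\mathbf{u}=t$; and the $(1,1)$ block is $\mathbf{I}+\mathbf{A}\mathbf{A}^{-1}\mathbf{u}\,t^{-1}\mathbf{v}^T\mathbf{A}^{-1}-\mathbf{u}t^{-1}\mathbf{v}^T\mathbf{A}^{-1}=\mathbf{I}$. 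Hence $\mathbf{B}\,\mathbf{B}^{-1}=\mathbf{I}_n$, and since $\mathbf{B}$ is square and invertible this right inverse is the inverse.

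There is essentially no deep obstacle here; the work is entirely careful block bookkeeping. The one point worth stating explicitly is that $t$ is the scalar Schur complement, so $t^{-1}$ is literally $1/t$ and every "inverse'' of the trailing block is an ordinary reciprocal — which is precisely what makes this formula cheap to apply recursively when the active set $S$ in \eqref{P2} is grown one index at a time, the intended downstream use.
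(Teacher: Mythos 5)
Your proof is correct: the block-by-block verification that $\mathbf{B}\,\mathbf{B}^{-1}=\mathbf{I}_n$ checks out (each off-diagonal block cancels after substituting $\mathbf{v}^T\mathbf{A}^{-1}\mathbf{u}=\alpha-t$), and this is exactly the ``simple matrix block operations'' the paper invokes without writing out, so you are taking the same route, just in full detail. The block $LDU$ factorization you sketch is a nice optional extra that also justifies invertibility via $\det\mathbf{B}=\det\mathbf{A}\cdot t\neq 0$, but it is not needed beyond the direct verification.
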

 \cref{For Inv} can be obtained through simple matrix block operations. We can derive the following optimal feature selection criterion by applying  \cref{For Inv}.

\begin{theorem}\label{M1}
	Problem \eqref{P2} is equivalent (in the sense of identifying the true subset) to:
	\begin{equation}
		\mathop{\argmax}\limits_{j\in S_{k-1}^c} \frac{\left({\mathbf{r}^k}^T \mathbf{X}_j\right)^2}{\mathbf{X}_j^T\left(\mathbf{I}-\mathbf{X}_{S_{k-1}}\left(\mathbf{X}_{S_{k-1}}^T \mathbf{X}_{S_{k-1}}\right)^{-1}\mathbf{X}_{S_{k-1}}^T\right)\mathbf{X}_j}, \label{select new}
	\end{equation}
	where $S_{k-1}=\operatorname{supp}(\boldsymbol{\beta}^{k-1})$ . 
\end{theorem}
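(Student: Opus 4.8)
The plan is to evaluate the objective of \eqref{P2} as an explicit function of the single candidate index $j\in S_{k-1}^c$ that gets appended to $S_{k-1}$, and then strip away the part that does not depend on $j$. Since the objective $\mathbf{y}^T\mathbf{X}_S(\mathbf{X}_S^T\mathbf{X}_S)^{-1}\mathbf{X}_S^T\mathbf{y}$ depends on $\boldsymbol{\beta}$ only through $S=\operatorname{supp}(\boldsymbol{\beta})=S_{k-1}\cup\{j\}$, I would first partition $\mathbf{X}_S=[\,\mathbf{X}_{S_{k-1}}\ \ \mathbf{X}_j\,]$, so that
\begin{equation*}
\mathbf{X}_S^T\mathbf{X}_S=\begin{pmatrix}\mathbf{X}_{S_{k-1}}^T\mathbf{X}_{S_{k-1}} & \mathbf{X}_{S_{k-1}}^T\mathbf{X}_j\\ \mathbf{X}_j^T\mathbf{X}_{S_{k-1}} & \mathbf{X}_j^T\mathbf{X}_j\end{pmatrix},\qquad \mathbf{X}_S^T\mathbf{y}=\begin{pmatrix}\mathbf{X}_{S_{k-1}}^T\mathbf{y}\\ \mathbf{X}_j^T\mathbf{y}\end{pmatrix}.
\end{equation*}
I would then apply \cref{For Inv} with $\mathbf{A}=\mathbf{X}_{S_{k-1}}^T\mathbf{X}_{S_{k-1}}$, $\mathbf{u}=\mathbf{v}=\mathbf{X}_{S_{k-1}}^T\mathbf{X}_j$ and $\alpha=\mathbf{X}_j^T\mathbf{X}_j$. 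The Schur complement becomes $t=\mathbf{X}_j^T\mathbf{X}_j-\mathbf{X}_j^T\mathbf{X}_{S_{k-1}}(\mathbf{X}_{S_{k-1}}^T\mathbf{X}_{S_{k-1}})^{-1}\mathbf{X}_{S_{k-1}}^T\mathbf{X}_j=\mathbf{X}_j^T(\mathbf{I}-P)\mathbf{X}_j$, where $P\triangleq\mathbf{X}_{S_{k-1}}(\mathbf{X}_{S_{k-1}}^T\mathbf{X}_{S_{k-1}})^{-1}\mathbf{X}_{S_{k-1}}^T$ is the orthogonal projector onto the column space of $\mathbf{X}_{S_{k-1}}$; this is precisely the denominator appearing in \eqref{select new}, and $t\neq 0$ exactly when $\mathbf{X}_j\notin\operatorname{range}(\mathbf{X}_{S_{k-1}})$, which I would note holds in general position (a $j$ violating it leaves the fitted values unchanged and can be discarded).

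Next I would substitute the block form of $\mathbf{B}^{-1}$ from \cref{For Inv} into the quadratic form $(\mathbf{X}_S^T\mathbf{y})^T(\mathbf{X}_S^T\mathbf{X}_S)^{-1}(\mathbf{X}_S^T\mathbf{y})$. Writing $\mathbf{a}=\mathbf{X}_{S_{k-1}}^T\mathbf{y}$ and $b=\mathbf{X}_j^T\mathbf{y}$ and expanding, the cross terms assemble into a perfect square (using $\mathbf{u}=\mathbf{v}$), and the objective collapses to
\begin{equation*}
\mathbf{y}^T\mathbf{X}_S(\mathbf{X}_S^T\mathbf{X}_S)^{-1}\mathbf{X}_S^T\mathbf{y}=\mathbf{a}^T\mathbf{A}^{-1}\mathbf{a}+\frac{\big(b-\mathbf{X}_j^T\mathbf{X}_{S_{k-1}}\mathbf{A}^{-1}\mathbf{a}\big)^2}{t}=\mathbf{y}^TP\mathbf{y}+\frac{\big(\mathbf{X}_j^T(\mathbf{I}-P)\mathbf{y}\big)^2}{\mathbf{X}_j^T(\mathbf{I}-P)\mathbf{X}_j}.
\end{equation*}
Since $\boldsymbol{\beta}^{k-1}$ is the least–squares coefficient supported on $S_{k-1}$, we have $\mathbf{X}\boldsymbol{\beta}^{k-1}=P\mathbf{y}$, hence $\mathbf{r}^k=\mathbf{y}-\mathbf{X}\boldsymbol{\beta}^{k-1}=(\mathbf{I}-P)\mathbf{y}$ and therefore $\mathbf{X}_j^T(\mathbf{I}-P)\mathbf{y}=\mathbf{X}_j^T\mathbf{r}^k$. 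The term $\mathbf{y}^TP\mathbf{y}$ does not depend on $j$, so maximizing the objective of \eqref{P2} over $j\in S_{k-1}^c$ is equivalent to maximizing $\big(\mathbf{X}_j^T\mathbf{r}^k\big)^2\big/\big(\mathbf{X}_j^T(\mathbf{I}-P)\mathbf{X}_j\big)$, which is exactly \eqref{select new}.

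I expect no genuine obstacle here beyond careful bookkeeping with the partitioned inverse and the identification $\mathbf{r}^k=(\mathbf{I}-P)\mathbf{y}$; the block expansion itself I would only sketch rather than grind through. The one point deserving explicit care is the precise meaning of ``equivalent (in the sense of identifying the true subset)'': I must argue that additively dropping the $j$-independent constant $\mathbf{y}^TP\mathbf{y}$ leaves the $\operatorname{argmax}$ over $j$ unchanged, and that the degenerate indices $j$ with $\mathbf{X}_j\in\operatorname{range}(\mathbf{X}_{S_{k-1}})$ — for which the appended column is redundant and the objective of \eqref{P2} is not strictly improved — are never selected, so they can be harmlessly excluded from the maximization in \eqref{select new}.
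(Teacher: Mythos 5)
Your proposal is correct and follows essentially the same route as the paper's own proof: the same block partition of $\mathbf{X}_S^T\mathbf{X}_S$, the same application of Lemma~\ref{For Inv} with $t=\mathbf{X}_j^T(\mathbf{I}-P)\mathbf{X}_j$, and the same final decomposition of the objective into the $j$-independent term $\mathbf{y}^TP\mathbf{y}$ plus the ratio in \eqref{select new} via $\mathbf{r}^k=(\mathbf{I}-P)\mathbf{y}$. The only cosmetic difference is that you contract the quadratic form $(\mathbf{X}_S^T\mathbf{y})^T(\mathbf{X}_S^T\mathbf{X}_S)^{-1}(\mathbf{X}_S^T\mathbf{y})$ directly into a perfect square, whereas the paper first expands the full hat matrix $\mathbf{X}_S(\mathbf{X}_S^T\mathbf{X}_S)^{-1}\mathbf{X}_S^T$; your added remarks on the nondegeneracy of $t$ and on dropping the additive constant are sound and, if anything, slightly more careful than the paper.
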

The proof of Theorem \ref{M1} is provided in Appendix \ref{App: select eq}.

\begin{definition}[\textbf{Objective-based Selection}]
	By Theorem \ref{M1}, the new criterion for feature importance outside the support set could be formulated as criterion \eqref{select new}.
\end{definition}

\begin{remark}
	When $\mathbf{X}_j$ is orthogonal to $\mathbf{X}_{S_{k-1}}$, the objective-based selection criterion \eqref{select new} degenerates into the correlation-based criterion \eqref{corr-based}.
\end{remark}

\begin{remark}[Comprehensive Combination Effect]\label{Combination effect}
	It is observed that the numerator of the objective-based selection criterion \eqref{select new} is identical to that of the classic criterion \eqref{corr-based}. However, the denominator in \eqref{select new} incorporates a projection matrix, which accounts for the interaction between the currently selected feature \(j\) and the remaining features in \(S_{k-1}\). Specifically, new criterion \eqref{select new} can be interpreted as the correlation between $\mathbf{X}_j$ and $\mathbf{r}^k$ in the noise subspace of $\mathbf{X}_{S_{k-1}}$. As a result, the new criterion captures a comprehensive `combination effect' in reducing the objective function.
\end{remark}

\begin{remark}[Computational Efficiency]\label{computation efficiency}
	Theorem \ref{M1} transforms the large matrix \(\left(\mathbf{X}_S^T \mathbf{X}_S\right)^{-1}\) in \eqref{P2} into a smaller matrix \(\left(\mathbf{X}_{S_{k-1}}^T \mathbf{X}_{S_{k-1}}\right)^{-1}\) in \eqref{select new}. In fact, when we update the coefficients on the support set $S_{k-1}$ at step \( k-1 \) using least squares  \( \left( \mathbf{X}_{S_{k-1}}^T \mathbf{X}_{S_{k-1}} \right)^{-1} \mathbf{X}_{S_{k-1}}^T \mathbf{y} \), this procedure involves either matrix inversion of $\mathbf{X}_{S_{k-1}}^T \mathbf{X}_{S_{k-1}}$ or solving the corresponding linear system via Cholesky decomposition. Notably, the inversion or Cholesky decomposition is required only once. Thus, despite the presence of an inversion term in the denominator of the new criterion \eqref{select new}, it does not incur additional magnitude of computational cost. We also perform runtime comparison between the algorithms using the classical criteria and the new criteria in Section \ref{Experiment}, with only a slight difference, see \cref{time}.
\end{remark}

\begin{remark}
	While writing this paper, we coincidentally came across the algorithm (SMP) with very similar criterion to \eqref{select new} in \cite{tohidi2025revisiting}. We independently derived the criterion with the completely different approaches taken. In that paper, a submodular perspective is used to derive a replacement algorithm for OMP, whereas our derivation is grounded in an unified optimization framework, with a specific focus on both optimal feature selection and elimination criteria. Moreover, in Section \ref{Section:meta}, we demonstrate that our criteria can be applied as a meta-heuristic method to all heuristic best subset selection algorithms.
\end{remark}

\vspace{-2mm}
\subsubsection{Optimal Elimination Criterion}
Similar to the selection problem, we make the following equivalent transformation to the original problem \eqref{Q1} in Lemma \ref{problemQ2} and introduce  \cref{Back Inv} to assist in deriving the optimal elimination criterion.

\begin{lemma}\label{problemQ2}
	Problem \eqref{Q1}
	is equivalent to
	\begin{align}
		&\mathop{\argmax}\limits_{\boldsymbol{\beta}}~~~~~~~~~~~~~  \mathbf{y}^T \mathbf{X}_S\left(\mathbf{X}_S^T \mathbf{X}_S\right)^{-1}\mathbf{X}_S^T\mathbf{y} \label{Q2} \tag{Q2} \\
		&	\stt   
		 S= \operatorname{supp}\left(\boldsymbol{\beta}\right)\subset \operatorname{supp}(\boldsymbol{\beta}^{k-1}), ~\|\boldsymbol{\beta}\|_0=\|\boldsymbol{\beta}^{k-1}\|_0- 1.\notag
	\end{align}
\end{lemma}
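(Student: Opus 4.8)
The plan is to mirror the argument used for Lemma \ref{problemP2} (the forward/selection case), since the only structural difference is that the support now shrinks rather than grows. First I would fix an admissible support set $S$ with $S \subset \operatorname{supp}(\boldsymbol{\beta}^{k-1})$ and $|S| = \|\boldsymbol{\beta}^{k-1}\|_0 - 1$, and restrict attention to vectors $\boldsymbol{\beta}$ whose support is contained in $S$. For such a fixed $S$, minimizing $\|\mathbf{y} - \mathbf{X}\boldsymbol{\beta}\|_2^2$ over the coefficients supported on $S$ is an ordinary (unconstrained) least-squares problem in the reduced variables $\boldsymbol{\beta}_S$, whose solution is $\hat{\boldsymbol{\beta}}_S = (\mathbf{X}_S^T \mathbf{X}_S)^{-1} \mathbf{X}_S^T \mathbf{y}$ (assuming $\mathbf{X}_S$ has full column rank, which is implicit throughout).

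The key computation is then to substitute this optimizer back into the objective. Writing $\mathbf{P}_S = \mathbf{X}_S(\mathbf{X}_S^T \mathbf{X}_S)^{-1}\mathbf{X}_S^T$ for the orthogonal projector onto the column space of $\mathbf{X}_S$, one gets
\begin{equation}
\min_{\operatorname{supp}(\boldsymbol{\beta}) \subseteq S} \|\mathbf{y} - \mathbf{X}\boldsymbol{\beta}\|_2^2 = \|\mathbf{y} - \mathbf{P}_S \mathbf{y}\|_2^2 = \|\mathbf{y}\|_2^2 - \mathbf{y}^T \mathbf{X}_S(\mathbf{X}_S^T \mathbf{X}_S)^{-1}\mathbf{X}_S^T \mathbf{y}, \notag
\end{equation}
using idempotency and symmetry of $\mathbf{P}_S$. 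Since $\|\mathbf{y}\|_2^2$ is a constant independent of the choice of $S$ and of $\boldsymbol{\beta}$, minimizing the left-hand side over all admissible $S$ is equivalent to maximizing the quadratic form $\mathbf{y}^T \mathbf{X}_S(\mathbf{X}_S^T \mathbf{X}_S)^{-1}\mathbf{X}_S^T \mathbf{y}$ over all admissible $S$, which is exactly problem \eqref{Q2}. Finally I would note that any feasible $\boldsymbol{\beta}$ for \eqref{Q1} attains its best objective value by replacing its coefficients with the least-squares fit on its own support, so the outer minimization over $\boldsymbol{\beta}$ in \eqref{Q1} reduces to the minimization over the support set $S$ together with the inner least-squares step; this is what makes \eqref{Q1} and \eqref{Q2} equivalent.

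I do not expect a genuine obstacle here — the argument is the standard "profile out the coefficients, reduce to a projection" manipulation, identical in spirit to the proof of Lemma \ref{problemP2} referenced in Appendix \ref{App: p1=p2}. The only point requiring a word of care is the implicit full-rank assumption on $\mathbf{X}_S$ needed for $(\mathbf{X}_S^T \mathbf{X}_S)^{-1}$ to exist; since $S \subset \operatorname{supp}(\boldsymbol{\beta}^{k-1})$ and $\mathbf{X}_{S_{k-1}}$ is assumed full rank in the selection analysis, any subset $S$ inherits this property, so the inverse is well defined throughout.
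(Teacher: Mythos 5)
Your proposal is correct and matches the paper's approach: the paper gives no separate proof for Lemma \ref{problemQ2}, stating only that it follows "similar to the selection problem," and its proof of Lemma \ref{problemP2} in Appendix \ref{App: p1=p2} is exactly your argument — profile out the coefficients via the least-squares solution $(\mathbf{X}_S^T\mathbf{X}_S)^{-1}\mathbf{X}_S^T\mathbf{y}$ on a fixed support and observe that minimizing the residual norm over admissible supports is equivalent to maximizing $\mathbf{y}^T\mathbf{X}_S(\mathbf{X}_S^T\mathbf{X}_S)^{-1}\mathbf{X}_S^T\mathbf{y}$. Your explicit remark on the full-rank condition for $\mathbf{X}_S$ is a point the paper leaves implicit, but it is handled correctly.
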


\begin{lemma}[Backward Inverse]\label{Back Inv}
	Suppose $\mathbf{B}^{-1}$ is known, where $\mathbf{B}^{-1}=\left(\begin{array}{cc}\mathbf{G}_{(n-1) \times(n-1)} & \mathbf{w} \\ \mathbf{z}^T & \gamma\end{array}\right)_{n \times n}$, $\mathbf{B}=\left(\begin{array}{cc}\mathbf{A}_{(n-1)\times (n-1)} & \mathbf{u} \\ \mathbf{v}^T & \alpha\end{array}\right)_{n \times n}$. Then the inverse of $\mathbf{A}$ could be updated as:
	\begin{equation}
		\mathbf{A}^{-1}=\mathbf{G}-\mathbf{w} \mathbf{z}^T/ \gamma.
	\end{equation}
\end{lemma}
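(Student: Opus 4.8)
The plan is to read \cref{For Inv} backwards. Assuming (as the notation $\mathbf{A}^{-1}$ implicitly requires) that $\mathbf{A}$ is nonsingular, set $t=\alpha-\mathbf{v}^T\mathbf{A}^{-1}\mathbf{u}$. By \cref{For Inv} this $t$ is nonzero — indeed $t^{-1}$ is precisely the $(n,n)$ entry of $\mathbf{B}^{-1}$, equivalently $t=\det\mathbf{B}/\det\mathbf{A}\neq 0$ — so \cref{For Inv} applies and gives all four blocks of $\mathbf{B}^{-1}$ in closed form. Matching them against the stated partition $\mathbf{B}^{-1}=\begin{pmatrix}\mathbf{G}&\mathbf{w}\\ \mathbf{z}^T&\gamma\end{pmatrix}$ yields the identifications $\gamma=t^{-1}$, $\mathbf{w}=-\mathbf{A}^{-1}\mathbf{u}\,t^{-1}$, $\mathbf{z}^T=-t^{-1}\mathbf{v}^T\mathbf{A}^{-1}$, and $\mathbf{G}=\mathbf{A}^{-1}+\mathbf{A}^{-1}\mathbf{u}\,t^{-1}\mathbf{v}^T\mathbf{A}^{-1}$. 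Then I would just compute $\mathbf{w}\mathbf{z}^T/\gamma=(-\mathbf{A}^{-1}\mathbf{u}\,t^{-1})(-t^{-1}\mathbf{v}^T\mathbf{A}^{-1})\cdot t=\mathbf{A}^{-1}\mathbf{u}\,t^{-1}\mathbf{v}^T\mathbf{A}^{-1}$, so that $\mathbf{G}-\mathbf{w}\mathbf{z}^T/\gamma=\mathbf{A}^{-1}$, which is exactly the claim.

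Alternatively, for a self-contained derivation that does not route through \cref{For Inv}, I would expand the block identity $\mathbf{B}^{-1}\mathbf{B}=\mathbf{I}_n$ and read off its first block-column: $\mathbf{G}\mathbf{A}+\mathbf{w}\mathbf{v}^T=\mathbf{I}_{n-1}$ and $\mathbf{z}^T\mathbf{A}+\gamma\mathbf{v}^T=\mathbf{0}^T$. From the second relation (using $\gamma\neq 0$) we get $\mathbf{v}^T=-\gamma^{-1}\mathbf{z}^T\mathbf{A}$; substituting this into the first relation gives $\mathbf{G}\mathbf{A}-\gamma^{-1}\mathbf{w}\mathbf{z}^T\mathbf{A}=\mathbf{I}_{n-1}$, i.e. $(\mathbf{G}-\mathbf{w}\mathbf{z}^T/\gamma)\mathbf{A}=\mathbf{I}_{n-1}$, hence $\mathbf{A}^{-1}=\mathbf{G}-\mathbf{w}\mathbf{z}^T/\gamma$. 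This is just the standard fact that the inverse of a principal submatrix equals the Schur complement, taken in $\mathbf{B}^{-1}$, of the complementary block of $\mathbf{B}$.

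This is a routine block-matrix manipulation, so I do not expect a genuine obstacle; the only point that deserves an explicit line is the nonvanishing of the scalar $\gamma$ (needed to divide by it), which I would justify by Cramer's rule as $\gamma=\det\mathbf{A}/\det\mathbf{B}$, nonzero whenever both $\mathbf{A}$ and $\mathbf{B}$ are invertible. I would also flag the invertibility of $\mathbf{A}$ as a standing hypothesis, consistent with its role ($\mathbf{A}=\mathbf{X}_S^T\mathbf{X}_S$ for the reduced support) in the elimination step where this lemma is applied.
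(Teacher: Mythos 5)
Your proposal is correct, and your first argument — reading \cref{For Inv} backwards, identifying the four blocks of $\mathbf{B}^{-1}$, and computing $\mathbf{G}-\mathbf{w}\mathbf{z}^T/\gamma=\mathbf{A}^{-1}$ — is precisely the route the paper intends, since it gives no separate proof and simply notes that \cref{Back Inv} is a straightforward corollary of \cref{For Inv}. Your alternative derivation via $\mathbf{B}^{-1}\mathbf{B}=\mathbf{I}_n$ and the remarks on the nonvanishing of $\gamma$ are also sound, and in fact more careful than the paper itself.
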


 \cref{Back Inv} is a straightforward corollary of  \cref{For Inv}.  We can derive the following optimal feature elimination criterion by applying  \cref{Back Inv}.

\begin{theorem}\label{M2}
	Let $\mathbf{C}_{k-1}= \left(\mathbf{X}_{S_{k-1}}^T\mathbf{X}_{S_{k-1}}\right)^{-1}, \mathbf{e}_j =  (\delta_{1i}, \delta_{2i}, \cdots, \delta_{ii}, \cdots, \delta_{\vert S_{k-1}\vert i})^T \in \mathbb{R}^{\vert S_{k-1}\vert}$, where \( j \) represents the \( i \)-th element of \( S_{k-1} \) for \( i = 1, 2, \dots, \vert S_{k-1} \vert \). The Kronecker delta function \( \delta_{ab} \) is defined as \( \delta_{ab} = 1 \) if \( a = b \), and \( \delta_{ab} = 0 \) otherwise.
	Then, problem \eqref{Q2} is equivalent (in the sense of identifying the true subset) to 
	\begin{align}
		\mathop{\argmax}\limits_{j \in S_{k-1}}~~&\mathbf{y}^T \mathbf{X}_{S_{k-1}} \left( \mathbf{I} - \mathbf{e}_j \mathbf{e}_j^T \right) 
		\bigg( \mathbf{C}_{k-1} - \notag \\
		&\frac{\mathbf{C}_{k-1} \mathbf{e}_j \mathbf{e}_j^T \mathbf{C}_{k-1}}{\mathbf{e}_j^T \mathbf{C}_{k-1} \mathbf{e}_j} \bigg) 
		\left(\mathbf{I} - \mathbf{e}_j \mathbf{e}_j^T \right) \mathbf{X}_{S_{k-1}}^T \mathbf{y}.
		\label{eliminate new}
	\end{align}
	
\end{theorem}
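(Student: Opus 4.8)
\textbf{Proof proposal for Theorem \ref{M2}.}
The plan is to mirror the derivation of \cref{M1}, but now using the \emph{Backward Inverse} update instead of the forward one. By \cref{problemQ2}, solving \eqref{Q1} amounts to maximizing $\mathbf{y}^T \mathbf{X}_S\left(\mathbf{X}_S^T \mathbf{X}_S\right)^{-1}\mathbf{X}_S^T\mathbf{y}$ over all $S$ of the form $S = S_{k-1}\setminus\{j\}$ with $j\in S_{k-1}$. So it suffices to show that, for each fixed $j$, the quadratic form on the right-hand side of \eqref{eliminate new} equals $\mathbf{y}^T \mathbf{X}_S\left(\mathbf{X}_S^T \mathbf{X}_S\right)^{-1}\mathbf{X}_S^T\mathbf{y}$ with $S = S_{k-1}\setminus\{j\}$; taking $\argmax$ over $j\in S_{k-1}$ then recovers \eqref{Q2} (hence \eqref{Q1}).

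First I would fix $j$, the $i$-th element of $S_{k-1}$, and reduce to the case where $j$ is the \emph{last} column of $\mathbf{X}_{S_{k-1}}$ by a symmetric permutation $\mathbf{P}$; since both $\mathbf{X}_{S_{k-1}}^T\mathbf{X}_{S_{k-1}}$ and its inverse $\mathbf{C}_{k-1}$ transform by conjugation with $\mathbf{P}$, and $\mathbf{e}_j$ maps to the last coordinate vector, no generality is lost. In this ordering write $\mathbf{X}_{S_{k-1}}^T\mathbf{X}_{S_{k-1}} = \left(\begin{smallmatrix}\mathbf{A} & \mathbf{u}\\ \mathbf{v}^T & \alpha\end{smallmatrix}\right)$ with $\mathbf{A}=\mathbf{X}_S^T\mathbf{X}_S$, and correspondingly $\mathbf{C}_{k-1} = \left(\begin{smallmatrix}\mathbf{G} & \mathbf{w}\\ \mathbf{z}^T & \gamma\end{smallmatrix}\right)$. \cref{Back Inv} gives $\left(\mathbf{X}_S^T\mathbf{X}_S\right)^{-1} = \mathbf{G} - \mathbf{w}\mathbf{z}^T/\gamma$. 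Now the key computation: because $\mathbf{e}_j$ is the last coordinate vector, $\mathbf{e}_j^T\mathbf{C}_{k-1}\mathbf{e}_j = \gamma$, $\mathbf{C}_{k-1}\mathbf{e}_j = \left(\begin{smallmatrix}\mathbf{w}\\ \gamma\end{smallmatrix}\right)$, $\mathbf{e}_j^T\mathbf{C}_{k-1} = (\mathbf{z}^T,\gamma)$, so a direct block multiplication yields
\begin{equation}
\mathbf{C}_{k-1} - \frac{\mathbf{C}_{k-1}\mathbf{e}_j\mathbf{e}_j^T\mathbf{C}_{k-1}}{\mathbf{e}_j^T\mathbf{C}_{k-1}\mathbf{e}_j} = \begin{pmatrix}\mathbf{G} - \mathbf{w}\mathbf{z}^T/\gamma & \mathbf{0}\\ \mathbf{0}^T & 0\end{pmatrix} = \begin{pmatrix}\left(\mathbf{X}_S^T\mathbf{X}_S\right)^{-1} & \mathbf{0}\\ \mathbf{0}^T & 0\end{pmatrix}.
\end{equation}
That is, the bracketed matrix in \eqref{eliminate new} is exactly $\left(\mathbf{X}_S^T\mathbf{X}_S\right)^{-1}$ re-embedded in $\mathbb{R}^{|S_{k-1}|}$ with a zero row and column at index $i$.

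Then I would observe that $\left(\mathbf{I}-\mathbf{e}_j\mathbf{e}_j^T\right)\mathbf{X}_{S_{k-1}}^T\mathbf{y}$ is the vector $\mathbf{X}_{S_{k-1}}^T\mathbf{y} = \left(\begin{smallmatrix}\mathbf{X}_S^T\mathbf{y}\\ \mathbf{X}_j^T\mathbf{y}\end{smallmatrix}\right)$ with its $i$-th (last) entry zeroed, i.e. $\left(\begin{smallmatrix}\mathbf{X}_S^T\mathbf{y}\\ 0\end{smallmatrix}\right)$. Multiplying out, the entire right-hand side of \eqref{eliminate new} collapses to $(\mathbf{X}_S^T\mathbf{y})^T\left(\mathbf{X}_S^T\mathbf{X}_S\right)^{-1}(\mathbf{X}_S^T\mathbf{y}) = \mathbf{y}^T\mathbf{X}_S\left(\mathbf{X}_S^T\mathbf{X}_S\right)^{-1}\mathbf{X}_S^T\mathbf{y}$, which is precisely the objective of \eqref{Q2} for the subset $S = S_{k-1}\setminus\{j\}$. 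Since the permutation step was harmless, this identity holds for every $j\in S_{k-1}$, so $\argmax$ over $j$ of \eqref{eliminate new} coincides with $\argmax$ over $S\subset S_{k-1}$, $|S| = |S_{k-1}|-1$, of \eqref{Q2}, completing the proof. Along the way I would note that $\gamma = \mathbf{e}_j^T\mathbf{C}_{k-1}\mathbf{e}_j > 0$ since $\mathbf{C}_{k-1}$ is positive definite (as $\mathbf{X}_{S_{k-1}}$ has full column rank), so the division is legitimate.

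The main obstacle I anticipate is bookkeeping rather than conceptual: carefully justifying the reduction to "$j$ is the last column" and checking that all three factors in \eqref{eliminate new}—the outer projections $\mathbf{I}-\mathbf{e}_j\mathbf{e}_j^T$, the rank-one-corrected inverse, and the data vectors $\mathbf{X}_{S_{k-1}}^T\mathbf{y}$—are simultaneously consistent with that reordering, so that the final block multiplication genuinely reproduces $\mathbf{y}^T\mathbf{X}_S\left(\mathbf{X}_S^T\mathbf{X}_S\right)^{-1}\mathbf{X}_S^T\mathbf{y}$ and nothing is lost by the redundancy of applying $\mathbf{I}-\mathbf{e}_j\mathbf{e}_j^T$ on top of a matrix that already has a zero $i$-th row and column.
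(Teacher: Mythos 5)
Your proposal is correct and follows essentially the same route as the paper's proof: both reduce via \cref{problemQ2} to maximizing $\mathbf{y}^T\mathbf{X}_S\left(\mathbf{X}_S^T\mathbf{X}_S\right)^{-1}\mathbf{X}_S^T\mathbf{y}$ over $S=S_{k-1}\setminus\{j\}$ and then apply \cref{Back Inv} to show the bracketed rank-one-corrected matrix is exactly $\left(\mathbf{X}_S^T\mathbf{X}_S\right)^{-1}$ padded with a zero row and column. The only difference is notational --- the paper works with the selection matrix $\mathbf{P}_j$ and the identity $\mathbf{P}_j\mathbf{P}_j^T=\mathbf{I}-\mathbf{e}_j\mathbf{e}_j^T$, whereas you permute $j$ to the last position and multiply blocks explicitly; both computations are equivalent.
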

The proof of Theorem \ref{M2} is provided in Appendix \ref{App: eliminate eq}.

\begin{definition}[\textbf{Objective-based Elimination}]
	By Theorem \ref{M2}, the new criterion for feature importance inside the support set could be formulated as criterion \eqref{eliminate new}.
\end{definition}

\begin{remark}\label{degenerate}
	When $\mathbf{X}_j$ and $\mathbf{X}_{S_{k-1}\backslash \{j\}}$ are orthogonal, the objective-based elimination criterion \eqref{eliminate new} degenerates into the Wald-T based criterion \eqref{wald T}. A detailed proof is provided in Appendix \ref{App: degenerate}.
\end{remark}

\begin{remark}[Comprehensive Combination Effect]\label{1}
	The new criterion \eqref{eliminate new} takes into account the impact on the other features in \( S_{k-1} \) resulting from the elimination of feature $j$.
\end{remark}

\begin{remark}[Computational Efficiency]\label{2}
The matrix \( \mathbf{C}_{k-1} \) or Cholesky decomposition of $\mathbf{X}_{S_{k-1}}^T\mathbf{X}_{S_{k-1}}$ has been already computed in the previous step when updating the coefficients on the support set \( S_{k-1} \), so the inversion term in criterion \eqref{eliminate new} does not incur additional magnitude of computational cost. See Appendix \ref{time} for comparison in runtime.
\end{remark}

\subsection{Enhanced Algorithms for Best Subset Selection}\label{Section:meta}
\vspace{-1mm}
As mentioned in the Introduction, heuristic subset selection algorithms generally determine selection and elimination based on \eqref{corr-based} and \eqref{wald T}, with different algorithms arising from various combination strategies of these criteria. By leveraging the optimal criteria in \cref{OP-metrics}, we can perform Meta-Substitution of the objective-based criteria \eqref{select new} and \eqref{eliminate new} into classical algorithms like MP, OMP, CoSaMP, IHT, and (A)BESS, resulting in an enhanced algorithm family, as shown in \cref{meta}.
\begin{figure}[ht]
		\vskip -0.06in
	\begin{center}
		\centerline{\includegraphics[width=1\columnwidth]{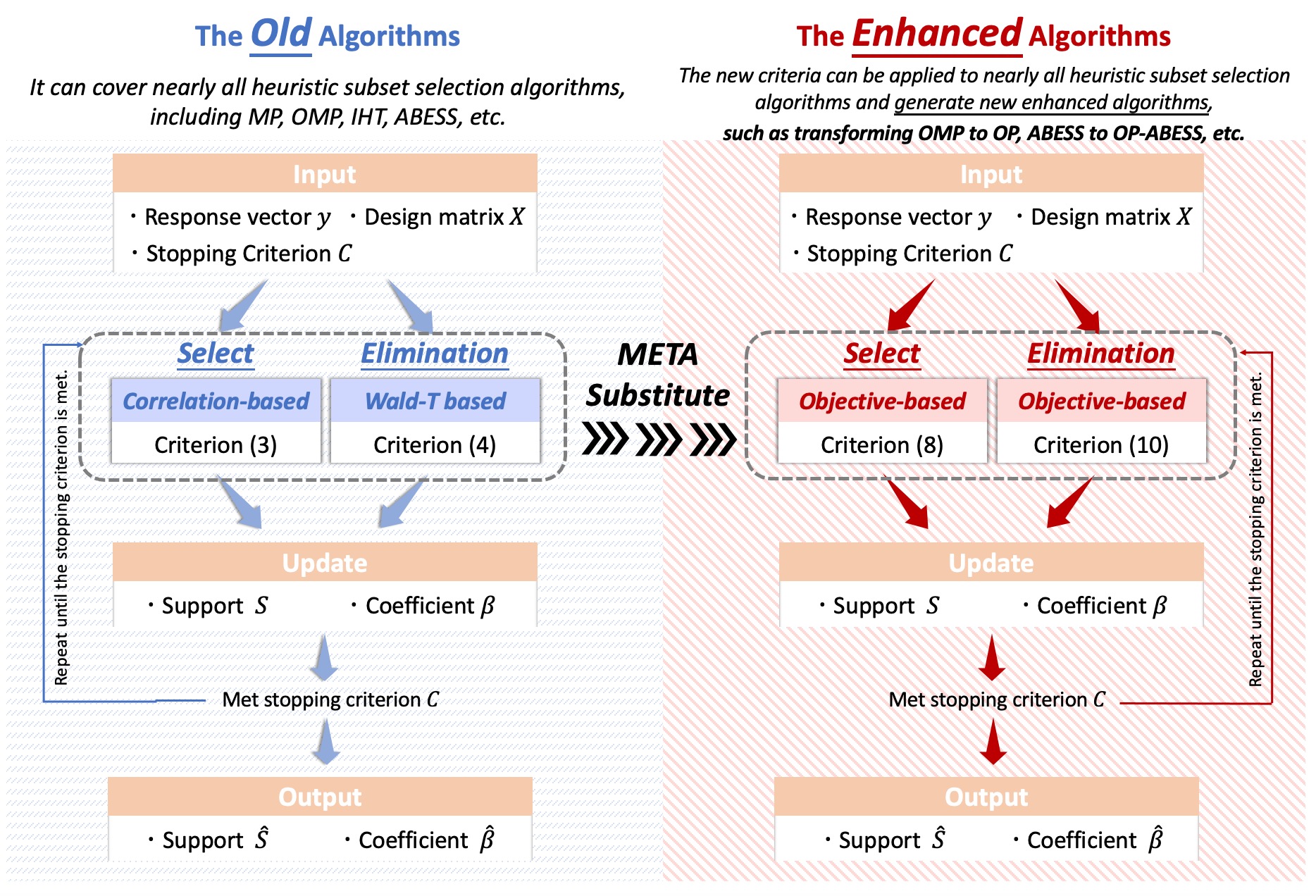}}
		\vspace{-3mm}
		\caption{
			Left: The original algorithmic workflow based on heuristic subset selection criteria.  
			Right: By replacing the classic criteria with optimal objective-based criteria, the original heuristic algorithms can be updated to yield new algorithms accordingly.}
		\label{meta}
	\end{center}
	\vskip -0.35in
\end{figure}

We classify subset selection algorithms into three categories based on their combination strategies for feature selection and elimination, providing one representative for each to show how Meta-Substitution generates new algorithms:

\textbf{Select-Only} :
This type of algorithm greedily selects feature at each step. Examples include  MP,  OMP, etc. For instance, OMP can be upgraded to Optimal Pursuit (OP) algorithm, as described in \cref{OP} in \cref{123}.  

\textbf{Select-First, Eliminate-Next}:
This type of algorithms first selects the features and then removes the irrelevant ones. Examples include CoSaMP, IHT, HTP, etc. For instance, CoSaMP can be enhanced to CoSaOP, as detailed in \cref{cosaop} in \cref{123}.

\textbf{Exchange-Based}:
This class of algorithms performs subset selection by swapping irrelevant features in the active set with significant features outside the active set. Notably, (A)BESS \cite{zhu2020polynomial}, currently serving as a benchmark method in the subset selection field, belongs to this category. It can be upgraded to the OP-(A)BESS algorithm, as outlined in \cref{BESS.Fix} and \ref{Splicing} in \cref{123}.

Beyond these examples, other greedy subset selection algorithms can also be enhanced through meta substitution scheme. We will demonstrate that these enhanced algorithms not only retain the original theoretical properties but also achieve significant meta-gains across various tasks, scenarios and evaluation metrics.

\vspace{-2mm}

\section{Theory of Optimal Subset Selection Criteria}
In this section, we establish theory of optimal subset selection criteria \eqref{select new} and \eqref{eliminate new}.
Define the function $f(S)$ as:
\begin{align}
	f(S) \triangleq & \min_{\boldsymbol{\beta}} \| \mathbf{y} - \mathbf{X}{\boldsymbol{\beta}}\|_2^2 \notag \\
	&	\stt  ~
	\text{supp}(\boldsymbol{\beta})=S.\notag
\end{align}
With this definition, we present the following theorems.
\begin{theorem}\label{4.1}
	For index $j^*$ selected by criterion \eqref{select new}, 
	\begin{equation*}
		f\left(S \cup \{ j^*\}\right)\le	f\left(S \cup \{ j\}\right), ~~ \forall  j\in S^c.
	\end{equation*}
\end{theorem}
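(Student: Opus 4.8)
\textbf{Proof proposal for Theorem \ref{4.1}.}

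The plan is to connect the optimization problem \eqref{P2} directly to the function $f$. First I would observe that for any support set $S$ with $\operatorname{supp}(\boldsymbol{\beta}^{k-1}) \subset S$, the least-squares solution restricted to $S$ gives $\min_{\operatorname{supp}(\boldsymbol{\beta})=S}\|\mathbf{y}-\mathbf{X}\boldsymbol{\beta}\|_2^2 = \|\mathbf{y}\|_2^2 - \mathbf{y}^T \mathbf{X}_S(\mathbf{X}_S^T\mathbf{X}_S)^{-1}\mathbf{X}_S^T\mathbf{y}$, i.e. $f(S) = \|\mathbf{y}\|_2^2 - \mathbf{y}^T \mathbf{X}_S(\mathbf{X}_S^T\mathbf{X}_S)^{-1}\mathbf{X}_S^T\mathbf{y}$. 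Since $\|\mathbf{y}\|_2^2$ is a constant independent of $S$, maximizing $\mathbf{y}^T \mathbf{X}_S(\mathbf{X}_S^T\mathbf{X}_S)^{-1}\mathbf{X}_S^T\mathbf{y}$ over admissible $S$ (as in \eqref{P2}) is exactly equivalent to minimizing $f(S)$ over the same family of sets.

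Next I would invoke Lemma \ref{problemP2} and Theorem \ref{M1}: problem \eqref{P1} is equivalent to \eqref{P2}, which in turn is equivalent to the criterion \eqref{select new}. The feasible set of \eqref{P1} (hence of \eqref{P2}) consists precisely of the sets $S \cup \{j\}$ for $j \in S^c$ (with $S = S_{k-1} = \operatorname{supp}(\boldsymbol{\beta}^{k-1})$), since the constraints force $\operatorname{supp}(\boldsymbol{\beta}^{k-1}) \subset \operatorname{supp}(\boldsymbol{\beta})$ and $\|\boldsymbol{\beta}\|_0 = \|\boldsymbol{\beta}^{k-1}\|_0 + 1$. Therefore the index $j^*$ produced by \eqref{select new} is exactly a maximizer of $\mathbf{y}^T \mathbf{X}_{S\cup\{j\}}(\mathbf{X}_{S\cup\{j\}}^T\mathbf{X}_{S\cup\{j\}})^{-1}\mathbf{X}_{S\cup\{j\}}^T\mathbf{y}$ over $j \in S^c$, which by the first step is a minimizer of $f(S\cup\{j\})$ over $j \in S^c$. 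This yields $f(S\cup\{j^*\}) \le f(S\cup\{j\})$ for all $j \in S^c$, as claimed.

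The main obstacle is mostly bookkeeping rather than depth: I need to make sure the "equivalent in the sense of identifying the true subset" phrasing in Theorem \ref{M1} is used correctly — i.e. that the argmax in \eqref{select new} returns the same index (or set of indices) as the argmax in \eqref{P2}, not merely the same optimal value — so that the selected $j^*$ genuinely attains the minimum of $f$. I would also need to handle the mild well-posedness caveat that $\mathbf{X}_{S\cup\{j\}}^T\mathbf{X}_{S\cup\{j\}}$ is invertible (equivalently, the denominator in \eqref{select new} is nonzero, i.e. $\mathbf{X}_j \notin \operatorname{span}(\mathbf{X}_{S_{k-1}})$); for indices $j$ where this fails, adding $j$ does not decrease $f$ and such $j$ can be excluded from consideration without affecting the statement. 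Beyond these routine points, the argument is a direct chaining of Lemma \ref{problemP2}, Theorem \ref{M1}, and the elementary identity for $f(S)$.
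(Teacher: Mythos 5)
Your proposal is correct and matches the paper's own treatment: the paper gives no separate proof of Theorem \ref{4.1}, stating only that it "summarizes the previous discussion," i.e.\ it follows by chaining Lemma \ref{problemP2} and Theorem \ref{M1} with the identity $f(S)=\|\mathbf{y}\|_2^2-\mathbf{y}^T\mathbf{X}_S(\mathbf{X}_S^T\mathbf{X}_S)^{-1}\mathbf{X}_S^T\mathbf{y}$, which is exactly your argument spelled out. Your added care about the argmax-versus-optimal-value reading of "equivalent" and the invertibility of $\mathbf{X}_{S\cup\{j\}}^T\mathbf{X}_{S\cup\{j\}}$ goes slightly beyond what the paper records, but does not change the route.
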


\begin{theorem}\label{4.2}
	For index $j^*$ selected by criterion \eqref{eliminate new}, 
	\begin{equation*}
		f\left(S \backslash \{ j^*\}\right)\le	f\left(S \backslash \{ j\}\right), ~~ \forall  j\in S.
	\end{equation*}
\end{theorem}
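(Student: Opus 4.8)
The plan is to observe that both theorems are essentially restatements of the equivalences already established in Theorems~\ref{M1} and~\ref{M2}, combined with the identity relating the least-squares residual norm to the projection formula. For Theorem~\ref{4.1}, first I would note that for any support set $S$, the minimizer in the definition of $f(S)$ is the least-squares solution $\hat{\boldsymbol{\beta}}_S = (\mathbf{X}_S^T\mathbf{X}_S)^{-1}\mathbf{X}_S^T\mathbf{y}$ (assuming $\mathbf{X}_S$ has full column rank), so that
\[
f(S) = \|\mathbf{y}\|_2^2 - \mathbf{y}^T\mathbf{X}_S(\mathbf{X}_S^T\mathbf{X}_S)^{-1}\mathbf{X}_S^T\mathbf{y}.
\]
Hence minimizing $f(S\cup\{j\})$ over $j\in S^c$ is the same as maximizing the quadratic form $\mathbf{y}^T\mathbf{X}_{S\cup\{j\}}(\mathbf{X}_{S\cup\{j\}}^T\mathbf{X}_{S\cup\{j\}})^{-1}\mathbf{X}_{S\cup\{j\}}^T\mathbf{y}$ over $j$, which is exactly problem~\eqref{P2} with $S_{k-1}=S$. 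By Lemma~\ref{problemP2} and Theorem~\ref{M1}, this maximization is in turn equivalent to choosing $j^*$ via criterion~\eqref{select new}. Therefore the index $j^*$ produced by~\eqref{select new} achieves the minimum of $f(S\cup\{j\})$, which is the claim.

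Theorem~\ref{4.2} follows by the symmetric argument with elimination in place of selection: $f(S\setminus\{j\}) = \|\mathbf{y}\|_2^2 - \mathbf{y}^T\mathbf{X}_{S\setminus\{j\}}(\mathbf{X}_{S\setminus\{j\}}^T\mathbf{X}_{S\setminus\{j\}})^{-1}\mathbf{X}_{S\setminus\{j\}}^T\mathbf{y}$, so minimizing $f(S\setminus\{j\})$ over $j\in S$ is equivalent to maximizing the same quadratic form over the one-smaller supports, which is problem~\eqref{Q2}; then Lemma~\ref{problemQ2} and Theorem~\ref{M2} identify this maximizer with the index chosen by~\eqref{eliminate new}. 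The bookkeeping step here is to confirm that the compact update formula in~\eqref{eliminate new}, obtained via the Backward Inverse lemma (Lemma~\ref{Back Inv}), indeed computes $\mathbf{y}^T\mathbf{X}_{S\setminus\{j\}}(\mathbf{X}_{S\setminus\{j\}}^T\mathbf{X}_{S\setminus\{j\}})^{-1}\mathbf{X}_{S\setminus\{j\}}^T\mathbf{y}$ up to a constant independent of $j$ — but this is precisely the content of Theorem~\ref{M2}, so it may simply be cited.

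The main obstacle, and the only point requiring care, is the full-rank assumption: the formula $f(S) = \|\mathbf{y}\|_2^2 - \mathbf{y}^T\mathbf{X}_S(\mathbf{X}_S^T\mathbf{X}_S)^{-1}\mathbf{X}_S^T\mathbf{y}$ requires $\mathbf{X}_S^T\mathbf{X}_S$ invertible, and~\eqref{select new} likewise presupposes $\mathbf{X}_{S_{k-1}}$ has full column rank. I would state this as a standing assumption (consistent with the rest of the paper, which already writes such inverses freely), or alternatively replace the inverse by a Moore–Penrose pseudoinverse and note that the projection $\mathbf{X}_S(\mathbf{X}_S^T\mathbf{X}_S)^{\dagger}\mathbf{X}_S^T$ onto $\mathrm{range}(\mathbf{X}_S)$ is still well-defined, so $f(S) = \|(\mathbf{I}-P_S)\mathbf{y}\|_2^2$ in all cases. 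Beyond that, the proof is just chaining the already-proved equivalences, so it should be short.
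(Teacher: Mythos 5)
Your proposal is correct and follows essentially the same route as the paper, which offers no separate proof of Theorem~\ref{4.2} but simply notes that it ``summarizes the previous discussion,'' i.e., it follows by writing $f(S\setminus\{j\})=\|\mathbf{y}\|_2^2-\mathbf{y}^T\mathbf{X}_{S\setminus\{j\}}(\mathbf{X}_{S\setminus\{j\}}^T\mathbf{X}_{S\setminus\{j\}})^{-1}\mathbf{X}_{S\setminus\{j\}}^T\mathbf{y}$ and chaining Lemma~\ref{problemQ2} with Theorem~\ref{M2}. Your explicit remark about the full-rank (or pseudoinverse) caveat is a reasonable addition that the paper leaves implicit.
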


Theorems \ref{4.1} and \ref{4.2} summarize the previous discussion, demonstrating that criteria \eqref{select new} and \eqref{eliminate new} serve as the optimal decisions in the current subset selection process.

\begin{theorem}\label{4.3}
	The computational complexities of OMP and OP, CoSaMP and CoSaOP, as well as (A)BESS and OP-(A)BESS, are of the same order of magnitude.
\end{theorem}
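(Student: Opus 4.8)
The plan is to argue that for each pair (classical algorithm, enhanced algorithm), the only difference in each iteration is the replacement of the classical scalar criterion \eqref{corr-based} or \eqref{wald T} by the objective-based criterion \eqref{select new} or \eqref{eliminate new}, and then to verify that evaluating the new criteria over all candidate indices costs no more (in order of magnitude) than the matrix operations already performed by the classical algorithm in the same iteration. Concretely, I would first recall that OMP, CoSaMP and (A)BESS all perform, at each step $k$, a least-squares solve on the current support $S_{k-1}$, which requires forming or Cholesky-factorizing $\mathbf{X}_{S_{k-1}}^T\mathbf{X}_{S_{k-1}}$; denote its cost by $T_{\mathrm{LS}}(k)$. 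This dominates, or is of the same order as, the cost of the classical selection/elimination sweep in that step. The claim then reduces to: the per-step cost of the new criteria is $O(T_{\mathrm{LS}}(k) + (\text{classical sweep cost}))$.

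The key steps, in order. First, invoke Remark~\ref{computation efficiency}: the inverse $\mathbf{C}_{k-1}=(\mathbf{X}_{S_{k-1}}^T\mathbf{X}_{S_{k-1}})^{-1}$ (or the Cholesky factor) is already available as a by-product of the coefficient update at step $k-1$, so no new factorization is incurred. Second, for the selection criterion \eqref{select new}: the numerator $({\mathbf{r}^k}^T\mathbf{X}_j)^2$ is exactly the classical quantity, already computed; the denominator $\mathbf{X}_j^T(\mathbf{I}-\mathbf{X}_{S_{k-1}}\mathbf{C}_{k-1}\mathbf{X}_{S_{k-1}}^T)\mathbf{X}_j$ requires, per candidate $j$, one matrix–vector product $\mathbf{X}_{S_{k-1}}^T\mathbf{X}_j$ (cost $O(n|S_{k-1}|)$ or reused from $\mathbf{X}^T\mathbf{X}$), one application of $\mathbf{C}_{k-1}$ (cost $O(|S_{k-1}|^2)$, or $O(|S_{k-1}|^2)$ via back-substitution with the Cholesky factor), and an inner product; summed over the $O(p)$ candidates this is $O(p\,|S_{k-1}|^2 + np|S_{k-1}|)$, which is of the same order as the $O(np)$–$O(np\,|S_{k-1}|)$ work the classical OMP/CoSaMP step already spends correlating residuals and doing the least-squares solve. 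Third, for the elimination criterion \eqref{eliminate new}: using Lemma~\ref{Back Inv}, dropping index $j$ from $S_{k-1}$ only requires the rank-one update $\mathbf{C}_{k-1}-\mathbf{C}_{k-1}\mathbf{e}_j\mathbf{e}_j^T\mathbf{C}_{k-1}/(\mathbf{e}_j^T\mathbf{C}_{k-1}\mathbf{e}_j)$, and the objective value $\mathbf{y}^T\mathbf{X}_S(\mathbf{X}_S^T\mathbf{X}_S)^{-1}\mathbf{X}_S^T\mathbf{y}$ for $S=S_{k-1}\setminus\{j\}$ follows from it with $O(|S_{k-1}|^2)$ extra work; over all $j\in S_{k-1}$ this totals $O(|S_{k-1}|^3)$, again matching the cost of the least-squares machinery and the classical Wald-$T$ sweep already present in HTP/CoSaMP/(A)BESS. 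Fourth, I would note that the outer-loop structure (number of iterations, splicing/exchange bookkeeping in (A)BESS) is identical in the classical and enhanced versions, so summing the per-step bounds preserves the order of magnitude.

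The main obstacle is precision about the computational model: "same order of magnitude" must be stated relative to the cost of the least-squares updates that all these algorithms already perform, and one must be careful that the $O(|S_{k-1}|^2)$–per–candidate applications of $\mathbf{C}_{k-1}$ (done via the stored Cholesky factor rather than an explicit inverse) do not secretly exceed this — in particular for the elimination criterion, a naive reading of \eqref{eliminate new} suggests an $O(|S_{k-1}|^2)$ matrix reconstruction for each of the $|S_{k-1}|$ candidates, and the nontrivial point is that Lemma~\ref{Back Inv} makes each such update $O(|S_{k-1}|^2)$ \emph{incrementally} rather than from scratch, keeping the total at the order of one extra least-squares solve. Once the cost model is fixed and the rank-one update structure of Lemmas~\ref{For Inv} and~\ref{Back Inv} is exploited, the remaining bookkeeping is routine, and the empirical runtime comparison in Appendix~\ref{time} corroborates the bound.
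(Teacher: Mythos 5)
Your proposal follows the same route as the paper: the paper's entire argument for Theorem~\ref{4.3} is a one-line appeal to Remark~\ref{computation efficiency} and Remark~\ref{2} (the inverse or Cholesky factor of $\mathbf{X}_{S_{k-1}}^T\mathbf{X}_{S_{k-1}}$ is already a by-product of the previous least-squares update, so the new criteria add no extra order of magnitude), and you invoke exactly those remarks and then supply the per-candidate cost accounting that the paper leaves implicit. Your only fragile point is the $O(np|S_{k-1}|)$ term for the selection sweep, which exceeds the classical $O(np)$ correlation cost by a factor of $|S_{k-1}|$ unless the products $\mathbf{X}_{S_{k-1}}^T\mathbf{X}_j$ are reused from a precomputed Gram matrix or from the stored $\mathbf{H}_{k-1}$ as in Algorithms~\ref{OP} and~\ref{cosaop} --- a caveat you already flag and which the paper itself does not address beyond the empirical runtimes in Appendix~\ref{time}.
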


Theorem \ref{4.3} follows from Remark \ref{computation efficiency} and Remark \ref{2}, and it indicates that the enhanced algorithms have the same computational complexity as the original algorithms. Now, we take the CoSaOP algorithm (Select-First, Eliminate-Next) as an example to illustrate how the enhanced algorithms retain the theoretical properties of the original algorithm. The subsequent Lemmas \ref{lemma3_cosaop}–\ref{lemma_least_square} and \cref{linear} correspond directly to theoretical results of CoSaMP in \cite{NEEDELL2009301}.

The basic assumption and proofs of Lemmas \ref{lemma3_cosaop}–\ref{lemma_least_square} are provided in Appendices \ref{App: lemma3_cosaop}-\ref{APP:lemma_least}. And the proof of convergence property of  CoSaOP  (\cref{linear}) is presented in \cref{APP:linear}.

\begin{lemma}[Identification] \label{lemma3_cosaop}
	The set $\Omega$ selected by CoSaOP during identification stage (Step 6 in \cref{cosaop}) at iteration $k$ satisfies
	\begin{equation*}
		\left\|\left(\boldsymbol{\beta}^{k-1}-\boldsymbol{\beta}^*\right)\left|_{\Omega^c}\right. \right\|_2 \le 0.2353 \left\| \boldsymbol{\beta}^{k-1}-\boldsymbol{\beta}^*\right\|_2+2.4\left\|\boldsymbol{\epsilon}\right\|_2.
	\end{equation*}
\end{lemma}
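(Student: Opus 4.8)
The plan is to mirror the identification-lemma argument of CoSaMP \cite{NEEDELL2009301}, but with the proxy vector replaced so that its largest entries are chosen according to the objective-based selection criterion \eqref{select new} rather than the plain correlation criterion \eqref{corr-based}. First I would set up notation: let $\boldsymbol{\beta}^*$ be the target $K$-sparse (or nearly sparse) signal, let $\boldsymbol{\beta}^{k-1}$ be the current iterate with support $S_{k-1}$, and let $\mathbf{r}^k = \mathbf{y} - \mathbf{X}\boldsymbol{\beta}^{k-1}$ be the current residual, so that $\mathbf{r}^k = \mathbf{X}(\boldsymbol{\beta}^* - \boldsymbol{\beta}^{k-1}) + \boldsymbol{\epsilon}$. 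In CoSaMP the identification step takes the $2K$ largest entries of the proxy $\mathbf{X}^T\mathbf{r}^k$; here, by Theorem \ref{M1} and the structure of \eqref{select new}, the batch $\Omega$ of newly identified indices is the set maximizing the objective-based scores, which by Theorem \ref{4.1} is exactly the choice that minimizes $f(S_{k-1}\cup\Omega)$ over all admissible batches of that size. Thus $\Omega$ is at least as good, in terms of reducing the least-squares objective, as the CoSaMP choice $\Omega_{\mathrm{corr}}$ obtained from the correlation proxy.

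The key steps, in order, are: (i) invoke the optimality guarantee of Theorem \ref{4.1} (extended to batch selection of the appropriate cardinality) to conclude $f(S_{k-1}\cup\Omega) \le f(S_{k-1}\cup\Omega_{\mathrm{corr}})$; (ii) translate this objective inequality into a statement about the uncaptured part of the residual energy — concretely, $f(S_{k-1}\cup\Omega) = \min_{\mathrm{supp}(\mathbf{z})\subseteq S_{k-1}\cup\Omega}\|\mathbf{y}-\mathbf{X}\mathbf{z}\|_2^2$, so a small value of $f$ forces $\mathbf{X}(\boldsymbol{\beta}^* - \boldsymbol{\beta}^{k-1})$ to be well-approximated by columns in $S_{k-1}\cup\Omega$; (iii) use the restricted isometry property (the basic RIP assumption stated in Appendix \ref{App: lemma3_cosaop}) to pass from approximation in the $\mathbf{X}$-range back to the coefficient domain, bounding $\|(\boldsymbol{\beta}^{k-1}-\boldsymbol{\beta}^*)|_{\Omega^c}\|_2$; and (iv) carry through the same constants as in the CoSaMP identification lemma, noting that since the objective-based choice dominates the correlation choice, the CoSaMP constants $0.2353$ and $2.4$ remain valid (they are, if anything, conservative for the enhanced algorithm). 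Steps (iii)–(iv) are essentially the CoSaMP bookkeeping: expand $\mathbf{r}^k$, apply RIP bounds of the form $\|\mathbf{X}_T^T\mathbf{X}_{T'}\|\le \delta$ for disjoint $T,T'$ and $(1-\delta)\|\mathbf{z}\|_2 \le \|\mathbf{X}_T\mathbf{z}\|_2$, and bound the noise contribution by $\|\boldsymbol{\epsilon}\|_2$ terms.

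The main obstacle I anticipate is step (i)–(ii): Theorems \ref{4.1} and \ref{4.2} as stated concern adding or removing a \emph{single} index, whereas the CoSaMP identification step adds a batch of $2K$ indices at once. I would need either a batch version of Theorem \ref{M1}/\ref{4.1} — arguing that greedily maximizing \eqref{select new} repeatedly, or selecting the top-$2K$ scores simultaneously, still yields an objective value no worse than the correlation-based batch — or, more robustly, I would sidestep exact batch-optimality and argue directly that the objective-based batch has residual energy $f(S_{k-1}\cup\Omega)$ no larger than $\|\mathbf{y}-\mathbf{X}\boldsymbol{\beta}^{k-1} - \text{(best correction supported on the CoSaMP batch)}\|_2^2$, which is all that the downstream RIP estimate actually uses. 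Once that comparison is in place, the remainder is a routine adaptation of the original CoSaMP proof, and the stated constants should follow without modification.
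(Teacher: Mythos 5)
Your plan diverges from the paper's proof in a way that introduces two genuine gaps. The paper does \emph{not} argue via objective-value domination at all: it observes that ranking candidates by criterion \eqref{select new} is the same as ranking them by the magnitudes of a \emph{diagonally reweighted} proxy $\mathbf{q}=\mathbf{D}\mathbf{X}^T\mathbf{r}^k$, with $\mathbf{D}_{jj}=\bigl(\mathbf{X}_j^T(\mathbf{I}-\mathbf{X}_{S_{k-1}}(\mathbf{X}_{S_{k-1}}^T\mathbf{X}_{S_{k-1}})^{-1}\mathbf{X}_{S_{k-1}}^T)\mathbf{X}_j\bigr)^{-1/2}$ off the current support. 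It then reruns the CoSaMP comparison $\|\mathbf{q}|_{R\setminus\Omega}\|_2\le\|\mathbf{q}|_{\Omega\setminus R}\|_2$ for $R=\operatorname{supp}(\boldsymbol{\beta}^{k-1}-\boldsymbol{\beta}^*)$, using the RIP to sandwich the weights, $1\le\mathbf{D}_{jj}\le(1-\delta_{K+1}/\sqrt{1-\delta_K})^{-1/2}$, so the lower bound on $\|\mathbf{q}|_{R\setminus\Omega}\|_2$ loses nothing while the upper bound on $\|\mathbf{q}|_{\Omega\setminus R}\|_2$ picks up the factor $\|\mathbf{D}\|_2$. This is why the constants in the lemma ($0.2353$, $2.4$) are \emph{worse} than CoSaMP's original identification constants --- contrary to your claim in step (iv) that the original constants "follow without modification."

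The gaps in your route are these. First, step (i) needs $f(S_{k-1}\cup\Omega)\le f(S_{k-1}\cup\Omega_{\mathrm{corr}})$, but the identification stage takes the top $2K$ scores of \eqref{select new} \emph{simultaneously}; Theorem \ref{4.1} only certifies optimality for adding a single index, and the top-$2K$ batch under the new scores is neither the optimal batch nor provably better (in $f$) than the top-$2K$ correlation batch --- you flag this yourself, but the "more robust" fallback you offer is exactly the unproven claim restated. Second, and more fundamentally, even if the objective comparison held, it would not transfer the coefficient-domain bound: $f(S_{k-1}\cup\Omega)\le f(S_{k-1}\cup\Omega_{\mathrm{corr}})$ says $\Omega$ explains more residual energy, not that it captures more of the error vector $\boldsymbol{\beta}^{k-1}-\boldsymbol{\beta}^*$; a batch of columns correlated with the residual but disjoint from $\operatorname{supp}(\boldsymbol{\beta}^{k-1}-\boldsymbol{\beta}^*)$ can lower $f$ while leaving $\|(\boldsymbol{\beta}^{k-1}-\boldsymbol{\beta}^*)|_{\Omega^c}\|_2$ large, and converting "small $f$" into the stated bound would amount to reproving the lemma by a different (and harder) RIP argument. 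The repair is to abandon the domination argument and follow the reweighted-proxy route: identify $\Omega$ as the top-$2K$ entries of $|\mathbf{D}\mathbf{X}^T\mathbf{r}^k|$ and track how $\mathbf{D}$ perturbs each inequality in the original CoSaMP proof.
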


\begin{lemma}[Support merger]  \label{support merger}
	Let \(\Omega\) be a set containing at most \(2K\) indices. Then, the set \( U = \Omega \cup \text{supp}(\boldsymbol{\beta}^{k-1}) \) contains at most \(3K\) indices, and it holds that
	\begin{equation*}  
		\left\|\boldsymbol{\beta}^*\big|_{U^c}\right\|_2 \leq \left\|\left(\boldsymbol{\beta}^* - \boldsymbol{\beta}^{k-1}\right)\big|_{\Omega^c}\right\|_2. 
	\end{equation*}  
\end{lemma}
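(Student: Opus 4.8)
The plan is to split the statement into its two independent assertions---the cardinality bound $|U| \le 3K$ and the norm inequality---and dispatch each by an elementary set-theoretic argument, using only the standing hypothesis of the CoSaOP analysis that the current iterate is $K$-sparse, i.e. $\|\boldsymbol{\beta}^{k-1}\|_0 \le K$ (this is part of the basic assumption recorded in the appendix and is preserved by the least-squares/pruning step of \cref{cosaop}). No analytic input is required; the lemma is purely structural, mirroring the ``support merger'' step of the CoSaMP analysis in \cite{NEEDELL2009301}.

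For the cardinality bound I would simply invoke subadditivity of cardinality under unions: since $U = \Omega \cup \operatorname{supp}(\boldsymbol{\beta}^{k-1})$, we get $|U| \le |\Omega| + |\operatorname{supp}(\boldsymbol{\beta}^{k-1})| \le 2K + K = 3K$, using $|\Omega| \le 2K$ by hypothesis and $|\operatorname{supp}(\boldsymbol{\beta}^{k-1})| = \|\boldsymbol{\beta}^{k-1}\|_0 \le K$.

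For the norm inequality I would exploit two inclusions coming directly from the definition of $U$. First, $U \supseteq \operatorname{supp}(\boldsymbol{\beta}^{k-1})$, so $U^c \subseteq (\operatorname{supp}(\boldsymbol{\beta}^{k-1}))^c$; hence $\boldsymbol{\beta}^{k-1}$ vanishes identically on $U^c$, which gives the pointwise identity $\boldsymbol{\beta}^*|_{U^c} = (\boldsymbol{\beta}^* - \boldsymbol{\beta}^{k-1})|_{U^c}$ and in particular $\|\boldsymbol{\beta}^*|_{U^c}\|_2 = \|(\boldsymbol{\beta}^* - \boldsymbol{\beta}^{k-1})|_{U^c}\|_2$. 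Second, $U \supseteq \Omega$, so $U^c \subseteq \Omega^c$, and restricting the fixed vector $\boldsymbol{\beta}^* - \boldsymbol{\beta}^{k-1}$ to a smaller coordinate set can only decrease the $\ell_2$ norm: $\|(\boldsymbol{\beta}^* - \boldsymbol{\beta}^{k-1})|_{U^c}\|_2 \le \|(\boldsymbol{\beta}^* - \boldsymbol{\beta}^{k-1})|_{\Omega^c}\|_2$. Chaining the identity with this inequality yields the claim.

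There is essentially no obstacle; the only point that warrants care is the bookkeeping that makes $\boldsymbol{\beta}^{k-1}$ disappear on $U^c$---namely spelling out that $\operatorname{supp}(\boldsymbol{\beta}^{k-1}) \subseteq U$ by construction---and keeping track of which coordinate sets are nested inside which. I would also remark explicitly that, unlike \cref{lemma3_cosaop}, this step is identical to its CoSaMP counterpart and is unaffected by the substitution of the objective-based selection criterion \eqref{select new}, since the merged set $U$ depends only on the cardinality of $\Omega$, not on how $\Omega$ was chosen.
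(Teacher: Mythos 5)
Your proposal is correct and follows essentially the same route as the paper's proof (which itself cites Lemma 4.3 of the CoSaMP paper): the identity $\boldsymbol{\beta}^*|_{U^c} = (\boldsymbol{\beta}^* - \boldsymbol{\beta}^{k-1})|_{U^c}$ from $\operatorname{supp}(\boldsymbol{\beta}^{k-1}) \subseteq U$, followed by the monotonicity of the restricted norm under $U^c \subseteq \Omega^c$. Your version merely spells out the cardinality count and the nesting of coordinate sets more explicitly than the paper's one-line argument.
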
  

\begin{lemma}[Estimation] \label{cosaop_estimation}
	Let $U$ be a set containing at most $3K$ indices. The least-squares estimate $\mathbf{a}$ is formulated as 
	\begin{equation*}
		\mathbf{a}|_U = \left(\mathbf{X}_U^T \mathbf{X}_U \right)^{-1} \mathbf{X}_U^T\mathbf{y}, \quad  \mathbf{a}|_{U^c} = \mathbf{0}.
	\end{equation*}
	Then, the following bound holds:
	\begin{equation*}
		\left\|\boldsymbol{\beta}^* - \mathbf{a}\right\|_2 \leq 1.112 \left\|\boldsymbol{\beta}^*|_{U^c}\right\|_2 + 1.06 \|\boldsymbol{\epsilon}\|_2.
	\end{equation*}
\end{lemma}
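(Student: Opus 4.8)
The plan is to reproduce, in the notation of this paper, the estimation bound of CoSaMP \cite{NEEDELL2009301}: the least-squares re-estimation step is identical in CoSaOP and CoSaMP (only the identification criterion differs), so the argument is driven entirely by the restricted isometry property (RIP) of $\mathbf{X}$ imposed in the appendix (write $\delta_r$ for the order-$r$ restricted isometry constant, and use $\|\boldsymbol{\beta}^*\|_0\le K$ together with the numeric bound on $\delta_{4K}$ fixed there). First I would split the error across $U$ and $U^c$: since $\mathbf{a}$ vanishes off $U$, the vectors $\boldsymbol{\beta}^*|_U-\mathbf{a}|_U$ and $\boldsymbol{\beta}^*|_{U^c}$ have disjoint supports, hence $\|\boldsymbol{\beta}^*-\mathbf{a}\|_2\le\|\boldsymbol{\beta}^*|_U-\mathbf{a}|_U\|_2+\|\boldsymbol{\beta}^*|_{U^c}\|_2$, and it remains to bound the first term.

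Next I would expand the on-support error. Writing $\mathbf{X}_U^{\dagger}=(\mathbf{X}_U^T\mathbf{X}_U)^{-1}\mathbf{X}_U^T$, substituting $\mathbf{y}=\mathbf{X}_U\boldsymbol{\beta}^*|_U+\mathbf{X}_{U^c}\boldsymbol{\beta}^*|_{U^c}+\boldsymbol{\epsilon}$, and using $\mathbf{X}_U^{\dagger}\mathbf{X}_U=\mathbf{I}$ (valid because $|U|\le 3K$, so RIP makes $\mathbf{X}_U$ of full column rank), one gets $\mathbf{a}|_U-\boldsymbol{\beta}^*|_U=\mathbf{X}_U^{\dagger}\mathbf{X}_{U^c}\boldsymbol{\beta}^*|_{U^c}+\mathbf{X}_U^{\dagger}\boldsymbol{\epsilon}$. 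The noise term obeys $\|\mathbf{X}_U^{\dagger}\boldsymbol{\epsilon}\|_2\le\|\boldsymbol{\epsilon}\|_2/\sqrt{1-\delta_{3K}}$, because $\|\mathbf{X}_U^{\dagger}\|_2$ is the reciprocal of the least singular value of $\mathbf{X}_U$, which RIP bounds below by $\sqrt{1-\delta_{3K}}$. For the interference term I would peel off $(\mathbf{X}_U^T\mathbf{X}_U)^{-1}$, whose operator norm is at most $1/(1-\delta_{3K})$, and then invoke the approximate-orthogonality consequence of RIP, $\|\mathbf{X}_A^T\mathbf{X}_B\mathbf{v}\|_2\le\delta_{|A|+|B|}\|\mathbf{v}\|_2$ for disjoint $A,B$, applied with $A=U$ and $B=\operatorname{supp}(\boldsymbol{\beta}^*)\cap U^c$. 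This is the one place the hypotheses are used: $|B|\le K$ because $\boldsymbol{\beta}^*$ is $K$-sparse, so $|A|+|B|\le 4K$, giving $\|\mathbf{X}_U^{\dagger}\mathbf{X}_{U^c}\boldsymbol{\beta}^*|_{U^c}\|_2\le\frac{\delta_{4K}}{1-\delta_{3K}}\|\boldsymbol{\beta}^*|_{U^c}\|_2$.

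Combining the pieces yields $\|\boldsymbol{\beta}^*-\mathbf{a}\|_2\le\bigl(1+\tfrac{\delta_{4K}}{1-\delta_{3K}}\bigr)\|\boldsymbol{\beta}^*|_{U^c}\|_2+\tfrac{1}{\sqrt{1-\delta_{3K}}}\|\boldsymbol{\epsilon}\|_2$; plugging in the appendix's RIP bound (with $\delta_{3K}\le\delta_{4K}$ small, e.g. at most $0.1$) makes the first coefficient at most $1.112$ and the second at most $1.06$, which is the claim. I do not expect a genuine obstacle: this lemma is exactly the CoSaMP estimation bound and its proof transfers unchanged, since CoSaOP modifies only the identification step. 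The parts that need care are bookkeeping rather than ideas: keeping the disjoint-support split clean so that no coefficient beyond the $\delta_{4K}/(1-\delta_{3K})$ correction is forced onto $\|\boldsymbol{\beta}^*|_{U^c}\|_2$; getting the sparsity arithmetic right ($3K+K=4K$, which is precisely where $K$-sparsity of $\boldsymbol{\beta}^*$ enters, and where a compressibility tail would otherwise have to be folded into $\boldsymbol{\epsilon}$ as in \cite{NEEDELL2009301}); and matching the RIP order and the numerical constants to whatever is fixed in the appendix so that $1.112$ and $1.06$ emerge after rounding.
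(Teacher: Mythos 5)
Your proposal is correct and matches the paper's approach: the paper's proof of this lemma is simply the one-line citation ``The proof follows from Lemma 4.4 in \cite{NEEDELL2009301},'' and your argument is precisely a faithful reconstruction of that CoSaMP estimation bound (split on/off $U$, bound $\mathbf{X}_U^{\dagger}\boldsymbol{\epsilon}$ by $\|\boldsymbol{\epsilon}\|_2/\sqrt{1-\delta_{3K}}$, and bound the interference term by $\delta_{4K}/(1-\delta_{3K})$ via approximate orthogonality with $|U|+|\operatorname{supp}(\boldsymbol{\beta}^*)\setminus U|\le 4K$). The numerics also check out against the appendix's assumption $\delta_{4K}\le 0.1$, giving $1+0.1/0.9\le 1.112$ and $1/\sqrt{0.9}\le 1.06$.
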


\begin{lemma}[Elimination] \label{lemma_elimination}
	Let \( S_k \) be the index set selected by CoSaOP from \( U \) according to criterion \eqref{eliminate new}. Then,
	\begin{equation*}
		\left\|\boldsymbol{\beta}^*- \tilde{\mathbf{a}}_{S_k}\right\|_2 \le (2+\delta) \left\|\boldsymbol{\beta}^*- {\mathbf{a}}\right\|_2,
	\end{equation*}
	where $\tilde{\mathbf{a}}_{S_k,j} = \mathbf{a}_j$ for $j\in S_k$,  $\tilde{\mathbf{a}}_{S_k,j} = 0$ for $j\in S_k^c$,  and \( 0\le \delta \le 1\) is a constant related to the orthogonality of \( \mathbf{X}_U \).
\end{lemma}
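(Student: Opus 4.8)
The plan is to mirror the ``pruning'' step of the CoSaMP analysis in \cite{NEEDELL2009301}, but with the magnitude-based best-$K$ truncation replaced by the objective-based rule \eqref{eliminate new}, absorbing the mismatch into the constant $\delta$. First I would observe that, since $\mathbf{a}$ is the least-squares fit on $U$, one has $\mathbf{X}\mathbf{a}=P_U\mathbf{y}$ (with $P_S$ the orthogonal projector onto the column space of $\mathbf{X}_S$), and hence for every $S\subseteq U$, $\mathbf{y}^T\mathbf{X}_S(\mathbf{X}_S^T\mathbf{X}_S)^{-1}\mathbf{X}_S^T\mathbf{y}=\|P_S\mathbf{y}\|_2^2=\|P_S\mathbf{X}\mathbf{a}\|_2^2$. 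Therefore, by Theorem \ref{M2}, the set $S_k$ maximizes $\|P_S\mathbf{X}\mathbf{a}\|_2^2$, equivalently minimizes $g(S):=\|\mathbf{X}\mathbf{a}-P_S\mathbf{X}\mathbf{a}\|_2^2$, over the $K$-subsets of $U$. Writing $P_{S_k}\mathbf{X}\mathbf{a}=\mathbf{X}\mathbf{z}^\star$ with $\operatorname{supp}(\mathbf{z}^\star)\subseteq S_k$ and decomposing $\mathbf{a}-\mathbf{z}^\star$ into its (orthogonal) parts on $S_k$ and on $U\setminus S_k$, where $\mathbf{z}^\star$ vanishes on the latter, gives $\|\mathbf{a}-\mathbf{z}^\star\|_2^2\ge\|\mathbf{a}|_{U\setminus S_k}\|_2^2=\|\mathbf{a}-\tilde{\mathbf{a}}_{S_k}\|_2^2$.

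Next I would bring in the near-orthogonality constant $\delta'\in[0,1)$ of $\mathbf{X}_U$, i.e.\ the restricted isometry constant over vectors supported on $U$ (of size $\le 3K$), so that $(1-\delta')\|\mathbf{v}\|_2^2\le\|\mathbf{X}\mathbf{v}\|_2^2\le(1+\delta')\|\mathbf{v}\|_2^2$ for all such $\mathbf{v}$. Combined with the previous display this yields the lower bound $g(S_k)=\|\mathbf{X}(\mathbf{a}-\mathbf{z}^\star)\|_2^2\ge(1-\delta')\|\mathbf{a}-\tilde{\mathbf{a}}_{S_k}\|_2^2$. For the matching upper bound I would compare $S_k$ against the CoSaMP truncation $\hat S\subseteq U$, namely the $K$ indices carrying the largest $|\mathbf{a}_j|$; optimality of $S_k$ for $g$ gives $g(S_k)\le g(\hat S)\le\|\mathbf{X}(\mathbf{a}-\tilde{\mathbf{a}}_{\hat S})\|_2^2\le(1+\delta')\|\mathbf{a}-\tilde{\mathbf{a}}_{\hat S}\|_2^2$ (with $\tilde{\mathbf{a}}_{\hat S}$ defined analogously), and the standard fact that the best $K$-sparse approximation of $\mathbf{a}$ is no farther from $\mathbf{a}$ than the $K$-sparse vector $\boldsymbol{\beta}^*|_U$ gives $\|\mathbf{a}-\tilde{\mathbf{a}}_{\hat S}\|_2\le\|(\mathbf{a}-\boldsymbol{\beta}^*)|_U\|_2\le\|\mathbf{a}-\boldsymbol{\beta}^*\|_2$ (using that $\boldsymbol{\beta}^*$ vanishes off its support, after harmlessly padding $\operatorname{supp}(\boldsymbol{\beta}^*)\cap U$ up to a $K$-subset of $U$). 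Chaining the two estimates on $g(S_k)$ yields $\|\mathbf{a}-\tilde{\mathbf{a}}_{S_k}\|_2\le\sqrt{(1+\delta')/(1-\delta')}\,\|\mathbf{a}-\boldsymbol{\beta}^*\|_2$, and the triangle inequality $\|\boldsymbol{\beta}^*-\tilde{\mathbf{a}}_{S_k}\|_2\le\|\boldsymbol{\beta}^*-\mathbf{a}\|_2+\|\mathbf{a}-\tilde{\mathbf{a}}_{S_k}\|_2$ then gives the claim with $\delta:=\sqrt{(1+\delta')/(1-\delta')}-1$, which equals $0$ when $\mathbf{X}_U$ is orthogonal (recovering the CoSaMP constant $2$, consistent with Remark \ref{degenerate}) and lies in $[0,1]$ whenever $\delta'\le 3/5$.

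The main obstacle is the inequality $g(S_k)\le g(\hat S)$. It is immediate if the elimination stage prunes $U$ to $K$ indices by solving the $K$-subset fit-maximization exactly; but if CoSaOP instead applies \eqref{eliminate new} greedily, one index at a time (as in the one-step statement of Theorem \ref{M2}), then one needs an auxiliary argument showing that backward-greedy elimination on the monotone set function $S\mapsto\|P_S\mathbf{y}\|_2^2$ loses at most a $\delta'$-controlled factor relative to the optimal $K$-subset. I expect this can be handled by an exchange argument that again only invokes the restricted isometry of $\mathbf{X}_U$, at the cost of a slightly larger but still $O(\delta')$ slack folded into $\delta$; the remaining steps (the projection identities, the RIP sandwich, and the best-$K$-approximation comparison) are routine.
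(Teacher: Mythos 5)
Your overall architecture is sound and, where it works, more quantitative than the paper's, but it hinges on exactly the step you flag as ``the main obstacle,'' and that step is a genuine gap rather than a routine one. The set $S_k$ in CoSaOP (Step 9 of Algorithm \ref{cosaop}) is obtained by a \emph{one-shot ranking}: the leave-one-out score \eqref{eliminate new} is computed for each $j\in U$ separately, and the $K$ indices with the smallest scores are retained. Theorem \ref{M2} (and Theorem \ref{4.2}) certifies optimality only for the removal of a \emph{single} index; it does not imply that the retained $K$-subset maximizes $\|P_S\mathbf{y}\|_2^2$ over all $K$-subsets of $U$, nor even that it beats the magnitude truncation $\hat S$. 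Indeed the comparison $g(S_k)\le g(\hat S)$ can fail for correlated columns: if two nearly collinear columns of $\mathbf{X}_U$ both carry signal, each receives a large leave-one-out score (the other compensates for its removal), so the ranking may discard both, while $\hat S$ keeps one. So your chain breaks at its first link unless you either (i) actually prove the exchange/backward-greedy argument you sketch, with an explicit loss folded into $\delta$ --- which you have not done, and which would itself require a correlation assumption --- or (ii) import a hypothesis that directly excludes the bad configurations.

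The paper takes route (ii) and makes no optimality claim for $S_k$ among $K$-subsets at all. It writes $\|\boldsymbol{\beta}^*-\tilde{\mathbf{a}}_{S_k}\|_2\le\|\boldsymbol{\beta}^*-\tilde{\mathbf{a}}_{K}\|_2+\|\tilde{\mathbf{a}}_{K}-\tilde{\mathbf{a}}_{S_k}\|_2$, where $\tilde{\mathbf{a}}_{K}$ is the best $K$-term magnitude approximation of $\mathbf{a}$; it bounds the first term by $2\|\boldsymbol{\beta}^*-\mathbf{a}\|_2$ exactly as you do, and it controls the second term, $\|\tilde{\mathbf{a}}_{\operatorname{supp}(\tilde{\mathbf{a}}_{K})\,\triangle\, S_k}\|_2\le\delta\|\boldsymbol{\beta}^*-\mathbf{a}\|_2$, by introducing an additional Assumption 4, namely $\operatorname{supp}(\boldsymbol{\beta}^*)\subseteq\left(S_k\,\triangle\,\operatorname{supp}(\tilde{\mathbf{a}}_{K})\right)^c$, under which $\mathbf{a}$ agrees with $\mathbf{a}-\boldsymbol{\beta}^*$ on that symmetric difference and hence the ratio $\delta$ lies in $[0,1]$. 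In other words, the constant $\delta$ in the lemma is not the RIP quantity $\sqrt{(1+\delta')/(1-\delta')}-1$ you construct; it is defined through the mismatch between $S_k$ and the magnitude truncation, and the bound $\delta\le 1$ is assumed, not derived. Your route would yield a strictly stronger, assumption-light statement if you could close the comparison inequality, but as written it does not close; the remaining steps (projection identities, RIP sandwich, best-$K$ comparison against $\boldsymbol{\beta}^*$) are indeed correct and routine.
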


\begin{lemma}[Least square estimation] \label{lemma_least_square}
	Let $\boldsymbol{\beta}^k$ denote the least-square estimation on $S_k$ obtained by CoSaOP. We have
	\begin{equation*}
		\|\boldsymbol{\beta}^k - \boldsymbol{\beta}^*\|_2 \le 1.106 	\left\|\boldsymbol{\beta}^* - \tilde{\mathbf{a}}_{S_k}\right\|_2+2.109 \|\boldsymbol{\epsilon}\|_2.
	\end{equation*}
\end{lemma}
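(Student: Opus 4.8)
The plan is to prove this directly from the \emph{least-squares optimality} of $\boldsymbol{\beta}^k$ on the fixed support $S_k$, so the argument does not use any property of the elimination criterion \eqref{eliminate new} itself — that criterion enters only through Lemma~\ref{lemma_elimination}, which controls $\|\boldsymbol{\beta}^*-\tilde{\mathbf{a}}_{S_k}\|_2$. It is the same residual-sandwiching used for the estimation step of CoSaMP (the analogue of Lemma~\ref{cosaop_estimation}), but simpler, since here the competitor $\tilde{\mathbf{a}}_{S_k}$ and the iterate $\boldsymbol{\beta}^k$ are supported on the \emph{same} set, so no "restriction to the complement'' term appears. Write $\mathbf{y}=\mathbf{X}\boldsymbol{\beta}^*+\boldsymbol{\epsilon}$ and recall that $\boldsymbol{\beta}^k$ minimizes $\|\mathbf{y}-\mathbf{X}\mathbf{z}\|_2$ over all $\mathbf{z}$ with $\operatorname{supp}(\mathbf{z})\subseteq S_k$, while $\tilde{\mathbf{a}}_{S_k}$ is one such $\mathbf{z}$; hence $\|\mathbf{y}-\mathbf{X}\boldsymbol{\beta}^k\|_2\le\|\mathbf{y}-\mathbf{X}\tilde{\mathbf{a}}_{S_k}\|_2$.

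First I would upper-bound the right-hand side by the triangle inequality, $\|\mathbf{y}-\mathbf{X}\tilde{\mathbf{a}}_{S_k}\|_2\le\|\mathbf{X}(\boldsymbol{\beta}^*-\tilde{\mathbf{a}}_{S_k})\|_2+\|\boldsymbol{\epsilon}\|_2$; since $\boldsymbol{\beta}^*-\tilde{\mathbf{a}}_{S_k}$ is supported on $S_k\cup\operatorname{supp}(\boldsymbol{\beta}^*)$, a coordinate set of size at most $2K$, the near-orthogonality (restricted isometry) hypothesis already in force for Lemmas~\ref{lemma3_cosaop}--\ref{lemma_elimination} gives $\|\mathbf{X}(\boldsymbol{\beta}^*-\tilde{\mathbf{a}}_{S_k})\|_2\le\sqrt{1+\delta}\,\|\boldsymbol{\beta}^*-\tilde{\mathbf{a}}_{S_k}\|_2$. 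Next I would lower-bound the left-hand side, $\|\mathbf{y}-\mathbf{X}\boldsymbol{\beta}^k\|_2\ge\|\mathbf{X}(\boldsymbol{\beta}^*-\boldsymbol{\beta}^k)\|_2-\|\boldsymbol{\epsilon}\|_2\ge\sqrt{1-\delta}\,\|\boldsymbol{\beta}^*-\boldsymbol{\beta}^k\|_2-\|\boldsymbol{\epsilon}\|_2$, again because $\boldsymbol{\beta}^*-\boldsymbol{\beta}^k$ lives in a coordinate subspace of dimension at most $2K$. Chaining the three inequalities and rearranging yields
\begin{equation*}
	\|\boldsymbol{\beta}^*-\boldsymbol{\beta}^k\|_2\ \le\ \sqrt{\tfrac{1+\delta}{1-\delta}}\;\|\boldsymbol{\beta}^*-\tilde{\mathbf{a}}_{S_k}\|_2\ +\ \tfrac{2}{\sqrt{1-\delta}}\;\|\boldsymbol{\epsilon}\|_2 .
\end{equation*}
Finally, substituting the numerical value of $\delta$ fixed by the standing assumption (the $\delta\le 0.1$ regime of the CoSaMP analysis, which is precisely what makes these constants come out) gives $\sqrt{(1+\delta)/(1-\delta)}\le\sqrt{1.1/0.9}<1.106$ and $2/\sqrt{1-\delta}\le 2/\sqrt{0.9}<2.109$, which is the claim.

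There is no real obstacle here, only bookkeeping I must get right: I need to confirm from the specification of \cref{cosaop} that $|S_k|\le K$, so that both difference vectors indeed lie in a $2K$-dimensional coordinate subspace and the RIP order invoked matches the one already assumed in Appendices~\ref{App: lemma3_cosaop}--\ref{APP:lemma_least}; if CoSaOP instead retains a $2K$-set, one simply uses $\delta_{3K}$ in place of $\delta$ throughout, which is controlled by the same hypothesis and leaves the constants unchanged at the stated precision. I should also state explicitly that the bound is criterion-agnostic — it is a generic guarantee for refitting by least squares on any support of the right cardinality — so that the chain Lemma~\ref{lemma3_cosaop} $\to$ \ref{support merger} $\to$ \ref{cosaop_estimation} $\to$ \ref{lemma_elimination} $\to$ \ref{lemma_least_square} composes cleanly into the linear-convergence statement \cref{linear}.
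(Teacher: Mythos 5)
Your proposal is correct and is essentially the paper's own proof: the paper likewise starts from $\|\mathbf{y}-\mathbf{X}\boldsymbol{\beta}^k\|_2\le\|\mathbf{y}-\mathbf{X}\tilde{\mathbf{a}}_{S_k}\|_2$, applies the triangle inequality to get $\|\mathbf{X}(\boldsymbol{\beta}^*-\boldsymbol{\beta}^k)\|_2\le\|\mathbf{X}(\boldsymbol{\beta}^*-\tilde{\mathbf{a}}_{S_k})\|_2+2\|\boldsymbol{\epsilon}\|_2$, sandwiches both sides via the eigenvalues of $\mathbf{X}_E^T\mathbf{X}_E$ on $E=\operatorname{supp}(\boldsymbol{\beta}^*)\cup S_k$ with $|E|\le 2K$, and concludes with $\sqrt{(1+\delta_{2K})/(1-\delta_{2K})}\le 1.106$ and $2/\sqrt{1-\delta_{2K}}\le 2.109$ under $\delta_{2K}\le 0.1$. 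Your bookkeeping concern is resolved exactly as you guessed: $|S_k|=K$, so the $\delta_{2K}$ order suffices.
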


Therefore, based on Lemmas \ref{lemma3_cosaop}-\ref{lemma_least_square}, we arrive at the following theorem.
\begin{theorem}[CoSaOP]\label{linear}
	When the approximation error is large in comparison with the noise, the CoSaOP algorithm achieves a linear convergence rate, expressed as
	\begin{equation*}
		\|\boldsymbol{\beta}^k - \boldsymbol{\beta}^*\|_2\le 0.869 	\|\boldsymbol{\beta}^{k-1} - \boldsymbol{\beta}^*\|_2 +14.482 \|\boldsymbol{\epsilon}\|_2.
	\end{equation*}
\end{theorem}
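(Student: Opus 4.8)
The plan is to chain the five lemmas in the order in which they appear, following the standard iteration-analysis template established for CoSaMP in \cite{NEEDELL2009301}, and then to absorb the new elimination step (\cref{lemma_elimination}) into that chain. Concretely, I would start from \cref{lemma3_cosaop} to control $\|(\boldsymbol{\beta}^{k-1}-\boldsymbol{\beta}^*)|_{\Omega^c}\|_2$ by $0.2353\|\boldsymbol{\beta}^{k-1}-\boldsymbol{\beta}^*\|_2+2.4\|\boldsymbol{\epsilon}\|_2$; then invoke \cref{support merger} to pass to the merged set $U$, giving $\|\boldsymbol{\beta}^*|_{U^c}\|_2 \le \|(\boldsymbol{\beta}^*-\boldsymbol{\beta}^{k-1})|_{\Omega^c}\|_2$ and hence the same bound; then \cref{cosaop_estimation} to bound $\|\boldsymbol{\beta}^*-\mathbf{a}\|_2 \le 1.112\|\boldsymbol{\beta}^*|_{U^c}\|_2 + 1.06\|\boldsymbol{\epsilon}\|_2$; then \cref{lemma_elimination} to bound $\|\boldsymbol{\beta}^*-\tilde{\mathbf{a}}_{S_k}\|_2 \le (2+\delta)\|\boldsymbol{\beta}^*-\mathbf{a}\|_2$; and finally \cref{lemma_least_square} to bound $\|\boldsymbol{\beta}^k-\boldsymbol{\beta}^*\|_2 \le 1.106\|\boldsymbol{\beta}^*-\tilde{\mathbf{a}}_{S_k}\|_2 + 2.109\|\boldsymbol{\epsilon}\|_2$. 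Composing these inequalities multiplies the contraction factors and accumulates the noise terms.

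Next I would do the bookkeeping. The coefficient of $\|\boldsymbol{\beta}^{k-1}-\boldsymbol{\beta}^*\|_2$ after composing is $1.106\cdot(2+\delta)\cdot 1.112\cdot 0.2353$; to reach the claimed $0.869$ one needs $\delta$ small enough (e.g. taking $\delta$ near $0$ gives roughly $1.106\cdot 2\cdot 1.112\cdot 0.2353 \approx 0.578$, and the stated $0.869$ leaves slack, which is presumably where the ``approximation error large compared with noise'' hypothesis and a worst-case $\delta$ enter). The noise constant $14.482$ is then obtained by propagating $2.4$, $1.06$, $2.109$ through the same multipliers and summing: $1.106(2+\delta)(1.112\cdot 2.4 + 1.06) + 2.109$, which for the relevant range of $\delta$ lands at the quoted value. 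I would state the RIP-type assumption on $\mathbf{X}_{3K}$ explicitly (inherited from the CoSaMP setup and used implicitly through the numerical constants in Lemmas \ref{lemma3_cosaop}, \ref{cosaop_estimation}, \ref{lemma_least_square}), and note that $\delta$ in \cref{lemma_elimination} is bounded by that same restricted-isometry constant.

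The main obstacle is \cref{lemma_elimination}: unlike CoSaMP, which simply hard-thresholds $\mathbf{a}$ to its top-$K$ entries (so the truncation error is literally the norm of the discarded coordinates of $\mathbf{a}$, hence at most $\|\boldsymbol{\beta}^*-\mathbf{a}\|_2$ up to a factor $\le 2$), CoSaOP eliminates according to the objective-based criterion \eqref{eliminate new}, which does not keep the largest-magnitude coordinates of $\mathbf{a}$ but rather the ones whose removal least increases the least-squares objective. So the crux is showing $(2+\delta)$ is still a valid factor: by \cref{4.2}, the set $S_k$ that CoSaOP keeps is the one minimizing $f(\cdot)$ over all $(3K-1)$-subsets of $U$ obtained by dropping one index — and one can then argue, using restricted orthonormality of $\mathbf{X}_U$, that the $\ell_2$-distortion from keeping an objective-optimal subset instead of the magnitude-optimal subset is controlled by $\delta$. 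Once that lemma is granted (it is proved in the appendix, per the text), the theorem is just arithmetic; the only remaining subtlety is making precise what ``approximation error large in comparison with noise'' means quantitatively so that the clean linear-rate statement, rather than the affine recursion, is what gets reported.
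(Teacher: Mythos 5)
Your proposal follows exactly the paper's proof: Appendix~\ref{APP:linear} composes Lemmas~\ref{lemma_least_square}, \ref{lemma_elimination}, \ref{cosaop_estimation}, \ref{support merger}, and \ref{lemma3_cosaop} in precisely this chain and then does the same arithmetic. The only point to tighten is the constant bookkeeping: the paper obtains $0.869$ and $14.482$ by taking the worst case $\delta = 1$ (so that $1.106 \cdot 3 \cdot 1.112 \cdot 0.2353 \approx 0.869$), not by leaving slack for small $\delta$; the ``approximation error large compared with noise'' clause plays no role in the contraction factor itself.
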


\vspace{-2mm}
In real-world scenarios, features are often highly correlated, making the RIP assumption, commonly used in classical best subset selection algorithms, invalid. Theorems \ref{select} and \ref{eliminate} further demonstrate the significant advantages of criteria \eqref{select new} and \eqref{eliminate new} over classical criteria \eqref{corr-based} and \eqref{wald T} in the presence of high feature correlation. For the proofs and further explanations, please refer to the Appendices \ref{select prove}-\ref{explanations}.

\begin{theorem}\label{select}
	Suppose the true subset $S^*$ contains indices $(i, j)$, where the correlation between feature $\mathbf{X}_i$ and $\mathbf{X}_j$ is $\rho = \frac{|\mathbf{X}_i^T\mathbf{X}_j|}{\vert|\mathbf{X}_i\vert|_2\vert|\mathbf{X}_j\vert|_2}$. Assuming the current support set $S$ already includes feature $i$, then the classical correlation-based criterion \eqref{corr-based} for feature $j$ satisfies: 
	\begin{equation}
		\frac{|{\mathbf{r}^k}^T \mathbf{X}_j|}{\vert|\mathbf{X}_j\vert|_2} \le \sqrt{1-\rho^2}\vert|\mathbf{r}^k\vert|_2,\label{C1}
	\end{equation}
	while the objective-based criterion \eqref{select new} satisfies
	\begin{equation}
		\frac{\left({\mathbf{r}^k}^T \mathbf{X}_j\right)^2}{\mathbf{X}_j^T\left(\mathbf{I}-\mathbf{X}_{S}\left(\mathbf{X}_{S}^T \mathbf{X}_{S}\right)^{-1}\mathbf{X}_{S}^T\right)\mathbf{X}_j} \ge \frac{1}{1-\rho^2}\left(\frac{{\mathbf{r}^k}^T \mathbf{X}_j}{\vert|\mathbf{X}_j\vert|_2}\right)^2.\label{C2}
	\end{equation}
\end{theorem}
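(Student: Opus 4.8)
The plan is to reduce both inequalities to elementary orthogonal‑geometry facts. The key starting observation is that in a correlation‑based greedy scheme the coefficient vector $\boldsymbol{\beta}^{k-1}$ is the least‑squares fit on the current support $S$, so the residual $\mathbf{r}^k=(\mathbf{I}-\mathbf{P}_S)\mathbf{y}$, with $\mathbf{P}_S=\mathbf{X}_S(\mathbf{X}_S^T\mathbf{X}_S)^{-1}\mathbf{X}_S^T$, is orthogonal to $\operatorname{col}(\mathbf{X}_S)$ and, in particular, to $\mathbf{X}_i$ since $i\in S$. I would then decompose $\mathbf{X}_j=\mathbf{X}_j^{\parallel}+\mathbf{X}_j^{\perp}$, where $\mathbf{X}_j^{\parallel}=\frac{\mathbf{X}_i^T\mathbf{X}_j}{\|\mathbf{X}_i\|_2^2}\mathbf{X}_i$ and $\mathbf{X}_i^T\mathbf{X}_j^{\perp}=0$, so that $\|\mathbf{X}_j^{\parallel}\|_2=\rho\|\mathbf{X}_j\|_2$ and, by Pythagoras, $\|\mathbf{X}_j^{\perp}\|_2^2=(1-\rho^2)\|\mathbf{X}_j\|_2^2$. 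For \eqref{C1}, orthogonality of $\mathbf{r}^k$ to $\mathbf{X}_i$ (hence to $\mathbf{X}_j^{\parallel}$) gives ${\mathbf{r}^k}^T\mathbf{X}_j={\mathbf{r}^k}^T\mathbf{X}_j^{\perp}$, and Cauchy–Schwarz finishes it: $|{\mathbf{r}^k}^T\mathbf{X}_j|\le\|\mathbf{r}^k\|_2\|\mathbf{X}_j^{\perp}\|_2=\sqrt{1-\rho^2}\,\|\mathbf{r}^k\|_2\|\mathbf{X}_j\|_2$, which is \eqref{C1} after dividing by $\|\mathbf{X}_j\|_2$.

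For \eqref{C2}, I would rewrite the denominator as $\|(\mathbf{I}-\mathbf{P}_S)\mathbf{X}_j\|_2^2$, the squared norm of the component of $\mathbf{X}_j$ orthogonal to $\operatorname{col}(\mathbf{X}_S)$, and use that $i\in S$ forces $\operatorname{span}(\mathbf{X}_i)\subseteq\operatorname{col}(\mathbf{X}_S)$. Monotonicity of orthogonal projection under subspace inclusion — the difference of two nested orthogonal projectors is itself an orthogonal projector, hence positive semidefinite — gives $\mathbf{I}-\mathbf{P}_S\preceq\mathbf{I}-\mathbf{P}_{\mathbf{X}_i}$ in the Loewner order, where $\mathbf{P}_{\mathbf{X}_i}=\mathbf{X}_i\mathbf{X}_i^T/\|\mathbf{X}_i\|_2^2$. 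Evaluating the quadratic form at $\mathbf{X}_j$ yields $\mathbf{X}_j^T(\mathbf{I}-\mathbf{P}_S)\mathbf{X}_j\le\mathbf{X}_j^T(\mathbf{I}-\mathbf{P}_{\mathbf{X}_i})\mathbf{X}_j=\|\mathbf{X}_j\|_2^2-\frac{(\mathbf{X}_i^T\mathbf{X}_j)^2}{\|\mathbf{X}_i\|_2^2}=(1-\rho^2)\|\mathbf{X}_j\|_2^2$. Dividing the nonnegative numerator $({\mathbf{r}^k}^T\mathbf{X}_j)^2$ by this upper bound on the denominator gives \eqref{C2} directly.

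I do not anticipate a genuine obstacle: the whole argument is Pythagoras, Cauchy–Schwarz, and the projector‑monotonicity inequality. The two points worth stating carefully are (i) that $\mathbf{r}^k$ is precisely the least‑squares residual on the current support $S$ — exactly the update rule used by the algorithms behind criterion \eqref{corr-based} — so that $\mathbf{r}^k\perp\mathbf{X}_i$ is legitimate; and (ii) that $\mathbf{X}_j^T(\mathbf{I}-\mathbf{P}_S)\mathbf{X}_j>0$, i.e. $\mathbf{X}_j\notin\operatorname{col}(\mathbf{X}_S)$, which holds under non‑degeneracy of the design and makes \eqref{C2} well posed. The contrast the theorem is after is then transparent: as $\rho\to1$ the bound in \eqref{C1} collapses to $0$ while the bound in \eqref{C2} blows up, so the classical criterion can miss the correlated relevant feature $j$ whereas the objective‑based criterion cannot.
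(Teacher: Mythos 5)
Your proof is correct and follows essentially the same route as the paper's: both arguments rest on the orthogonality of the least-squares residual $\mathbf{r}^k$ to $\operatorname{col}(\mathbf{X}_S)$, Cauchy--Schwarz for \eqref{C1}, and the bound $\mathbf{X}_j^T(\mathbf{I}-\mathbf{P}_S)\mathbf{X}_j\le(1-\rho^2)\|\mathbf{X}_j\|_2^2$ for \eqref{C2}. The only difference is that you explicitly justify that key bound via nested-projector monotonicity (the paper merely asserts it, with a typo $\sqrt{1-\rho}$ in place of $\sqrt{1-\rho^2}$), which is a welcome addition rather than a deviation.
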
 

\begin{theorem}\label{eliminate}
	(1) The upper bound of the objective-based criterion \eqref{eliminate new} is $\vert|\mathbf{y}\vert|_2^2$. If the true subset $S^*$ is contained within the current subset $S$, then for all  $j_m \in S \setminus S^*$,
	\begin{equation*}
		\vert|\mathbf{y}\vert|_2^2 - \vert|\boldsymbol{\epsilon}\vert|_2^2 \le (criterion~\eqref{eliminate new}~for~j_m) \le \vert|\mathbf{y}\vert|_2^2.
	\end{equation*}
	And in noiseless scenario, 
	\begin{equation*}
		j_m \in \argmax_{j \in S}~objective\text{-}based~criterion~\eqref{eliminate new}.
	\end{equation*}
	
	\vspace{-2mm}
	(2) Suppose a feature $\mathbf{X}_p$ in the current subset is pesudo-correlated with an important feature $\mathbf{X}_i$ in the true subset (with correlation $1 - \mu$). When $\mu$ is sufficiently small, classical T-statistics based criteria \eqref{wald T} could erroneously discard true features even in simple cases like $S = S^* \cup \{p\}$, whereas proposed criterion \eqref{eliminate new} could correctly identify and remove the spurious feature $\mathbf{X}_p$.  
\end{theorem}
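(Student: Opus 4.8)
The plan is to read criterion~\eqref{eliminate new} through its projection form. By Lemma~\ref{problemQ2} (equivalently, by the identity behind Theorem~\ref{M2}), for any candidate index $j\in S$ the value of criterion~\eqref{eliminate new} equals $\mathbf{y}^{T}\mathbf{X}_{S\setminus\{j\}}\left(\mathbf{X}_{S\setminus\{j\}}^{T}\mathbf{X}_{S\setminus\{j\}}\right)^{-1}\mathbf{X}_{S\setminus\{j\}}^{T}\mathbf{y}=\|\mathbf{P}_{S\setminus\{j\}}\mathbf{y}\|_{2}^{2}$, where $\mathbf{P}_{T}$ is the orthogonal projector onto $\mathrm{span}(\mathbf{X}_{T})$ (well defined since any sub-collection of active columns is linearly independent). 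Part~(1) then follows from elementary projector facts. The upper bound is immediate: $\mathbf{y}^{T}(\mathbf{I}-\mathbf{P}_{S\setminus\{j\}})\mathbf{y}=\|(\mathbf{I}-\mathbf{P}_{S\setminus\{j\}})\mathbf{y}\|_{2}^{2}\ge0$ gives $\|\mathbf{P}_{S\setminus\{j\}}\mathbf{y}\|_{2}^{2}\le\|\mathbf{y}\|_{2}^{2}$. For the lower bound, write $\mathbf{y}=\mathbf{X}\boldsymbol{\beta}^{*}+\boldsymbol{\epsilon}$; when $S^{*}\subset S$ and $j_{m}\in S\setminus S^{*}$ we have $\mathbf{X}\boldsymbol{\beta}^{*}\in\mathrm{span}(\mathbf{X}_{S^{*}})\subseteq\mathrm{span}(\mathbf{X}_{S\setminus\{j_{m}\}})$, hence $\mathbf{P}_{S\setminus\{j_{m}\}}\mathbf{X}\boldsymbol{\beta}^{*}=\mathbf{X}\boldsymbol{\beta}^{*}$; expanding $\|\mathbf{P}_{S\setminus\{j_{m}\}}\mathbf{y}\|_{2}^{2}$ and using symmetry of the projector to rewrite $\langle\mathbf{X}\boldsymbol{\beta}^{*},\mathbf{P}_{S\setminus\{j_{m}\}}\boldsymbol{\epsilon}\rangle=\langle\mathbf{X}\boldsymbol{\beta}^{*},\boldsymbol{\epsilon}\rangle$ yields $\|\mathbf{P}_{S\setminus\{j_{m}\}}\mathbf{y}\|_{2}^{2}=\|\mathbf{y}\|_{2}^{2}-\|\boldsymbol{\epsilon}\|_{2}^{2}+\|\mathbf{P}_{S\setminus\{j_{m}\}}\boldsymbol{\epsilon}\|_{2}^{2}\ge\|\mathbf{y}\|_{2}^{2}-\|\boldsymbol{\epsilon}\|_{2}^{2}$. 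Setting $\boldsymbol{\epsilon}=\mathbf{0}$ collapses this chain to $\|\mathbf{P}_{S\setminus\{j_{m}\}}\mathbf{y}\|_{2}^{2}=\|\mathbf{y}\|_{2}^{2}$, the global upper bound, so every $j_{m}\in S\setminus S^{*}$ attains $\argmax_{j\in S}$ of criterion~\eqref{eliminate new}.

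For part~(2) I would treat the two halves separately. That criterion~\eqref{eliminate new} removes $\mathbf{X}_{p}$ when $S=S^{*}\cup\{p\}$ is essentially part~(1): removing $p$ leaves $S^{*}$, which spans $\mathbf{X}\boldsymbol{\beta}^{*}$, so the value is $\|\mathbf{y}\|_{2}^{2}$ in the noiseless case and within $\|\boldsymbol{\epsilon}\|_{2}^{2}$ of it otherwise, whereas removing any $j\in S^{*}$ forfeits $\|(\mathbf{I}-\mathbf{P}_{S\setminus\{j\}})\mathbf{X}\boldsymbol{\beta}^{*}\|_{2}^{2}=(\boldsymbol{\beta}_{j}^{*})^{2}\|(\mathbf{I}-\mathbf{P}_{S\setminus\{j\}})\mathbf{X}_{j}\|_{2}^{2}$, a quantity bounded away from zero in generic position; so for small enough noise the unique maximizer is $p$. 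The real work is exhibiting an instance where the Wald-T rule~\eqref{wald T} instead deletes a true feature. Take $\mathbf{X}_{i},\mathbf{X}_{l}$ unit-norm and orthogonal ($i,l\in S^{*}$), a unit vector $\mathbf{v}\perp\mathrm{span}(\mathbf{X}_{i},\mathbf{X}_{l})$, set $\mathbf{X}_{p}=(1-\mu)\mathbf{X}_{i}+s\,\mathbf{v}$ with $s=\sqrt{1-(1-\mu)^{2}}$, and $\mathbf{y}=\beta_{i}^{*}\mathbf{X}_{i}+\beta_{l}^{*}\mathbf{X}_{l}+\epsilon\,\mathbf{v}+\nu\,\mathbf{w}$ for a unit $\mathbf{w}\perp\mathrm{span}(\mathbf{X}_{i},\mathbf{X}_{l},\mathbf{v})$, $\nu\ne0$ (working in $\mathbb{R}^{n}$ with $n\ge4$ so the residual is nonzero and $df\ge1$), with $|\beta_{i}^{*}|$ much larger than $|\beta_{l}^{*}|$ and $|\epsilon|$ small. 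Solving least squares on $S=\{i,l,p\}$ in this orthonormal frame gives $\hat{\boldsymbol{\beta}}_{i}=\beta_{i}^{*}-(1-\mu)\epsilon/s$, $\hat{\boldsymbol{\beta}}_{l}=\beta_{l}^{*}$, $\hat{\boldsymbol{\beta}}_{p}=\epsilon/s$, and residual norm $|\nu|$; since the columns are unit-normalized, criterion~\eqref{wald T} ranks features by $|\hat{\boldsymbol{\beta}}_{j}|$. As $\mu\to0$, variance inflation sends $|\hat{\boldsymbol{\beta}}_{i}|,|\hat{\boldsymbol{\beta}}_{p}|\to\infty$ while $|\hat{\boldsymbol{\beta}}_{l}|=|\beta_{l}^{*}|$ stays bounded, so the Wald-T rule deletes the true feature $\mathbf{X}_{l}$. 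In the same frame $\|\mathbf{P}_{S\setminus\{p\}}\mathbf{y}\|_{2}^{2}=\beta_{i}^{*2}+\beta_{l}^{*2}$, $\|\mathbf{P}_{S\setminus\{i\}}\mathbf{y}\|_{2}^{2}=\beta_{l}^{*2}+((1-\mu)\beta_{i}^{*}+s\epsilon)^{2}$, and $\|\mathbf{P}_{S\setminus\{l\}}\mathbf{y}\|_{2}^{2}=\beta_{i}^{*2}+\epsilon^{2}$, whose maximum is the first once $\mu$ is small and $|\beta_{l}^{*}|>|\epsilon|$, so criterion~\eqref{eliminate new} correctly removes $\mathbf{X}_{p}$.

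The only delicate point, and the main obstacle, is the calibration in part~(2): the two conclusions push $\mu$ in opposite directions — the Wald-T failure needs $\mu$ small enough that the inflated coefficients dominate $|\beta_{l}^{*}|$ (roughly $s<|\epsilon|/|\beta_{l}^{*}|$), whereas from the displayed values criterion~\eqref{eliminate new} still strictly prefers removing $\mathbf{X}_{p}$ only if $\mu$ is not too small relative to the noise (roughly $s\gtrsim|\epsilon|/|\beta_{i}^{*}|$). I would therefore record the explicit admissible window for $(\beta_{i}^{*},\beta_{l}^{*},\epsilon,\mu)$ and check that it is nonempty, which forces exactly the side conditions already used, namely that $|\beta_{i}^{*}|$ dominate $|\beta_{l}^{*}|$ and that $|\epsilon|$ be small; for the cleaner phrasing ``for all sufficiently small $\mu$'' one additionally lets $\epsilon$ shrink with $\mu$ (e.g.\ $\epsilon=\epsilon_{0}s$), which pins $|\hat{\boldsymbol{\beta}}_{p}|=\epsilon_{0}$ above $|\beta_{l}^{*}|$ for every small $\mu$ while preserving the projection ordering. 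Everything else is linear algebra in orthonormal frames together with the elementary projector identities established for part~(1).
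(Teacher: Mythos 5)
Your proof is correct. For part (1) you follow essentially the same route as the paper: both identify the value of criterion \eqref{eliminate new} at $j$ with the explained sum of squares $\mathbf{y}^T\mathbf{X}_{S\setminus\{j\}}(\mathbf{X}_{S\setminus\{j\}}^T\mathbf{X}_{S\setminus\{j\}})^{-1}\mathbf{X}_{S\setminus\{j\}}^T\mathbf{y}=\|\mathbf{y}\|_2^2-\|\mathbf{y}-\mathbf{X}_{S\setminus\{j\}}\hat{\boldsymbol{\beta}}\|_2^2$, get the upper bound $\|\mathbf{y}\|_2^2$ for free, and get the lower bound from $S^*\subseteq S\setminus\{j_m\}$; the paper invokes least-squares optimality against the feasible point $\boldsymbol{\beta}^*$, whereas you prove the sharper exact identity $\|\mathbf{P}_{S\setminus\{j_m\}}\mathbf{y}\|_2^2=\|\mathbf{y}\|_2^2-\|\boldsymbol{\epsilon}\|_2^2+\|\mathbf{P}_{S\setminus\{j_m\}}\boldsymbol{\epsilon}\|_2^2$, which implies the same bound. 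For part (2) you genuinely diverge: the paper exhibits a single fixed $3\times 3$ numerical counterexample (with correlation $\approx 0.9999$ between the spurious and a true column) and then disposes of the positive half by citing Theorem \ref{4.2}, while you build a parametric family indexed by $\mu$ in an orthonormal frame and compute both criteria in closed form. Your version buys two things the paper's does not: it tracks the dependence on $\mu$ explicitly, matching the theorem's ``$\mu$ sufficiently small'' phrasing, and it surfaces (and resolves) the real calibration issue — that the Wald-T failure wants $s<|\epsilon|/|\beta_l^*|$ while strict preference of criterion \eqref{eliminate new} for removing $\mathbf{X}_p$ wants $s\gtrsim|\epsilon|/|\beta_i^*|$ — showing the window is nonempty exactly when the dominant true coefficient is much larger than the weak one and the noise is small (or scales with $s$). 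The paper's appeal to Theorem \ref{4.2} only guarantees that the criterion picks the subset minimizing $f$, not that this subset is $S^*$ in the noisy case, so your explicit verification of the projection ordering is a useful tightening rather than a redundancy. No gaps.
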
 

\vspace{-2mm}
We also evaluate the algorithm's performance under high feature correlations to validate the theories through their marked improvements. See \cref{high corr} for details.

\vspace{-2mm}
\section{Experiments}\label{Experiment}

We conducted experiments on two typical subset selection problems: compressed sensing and sparse regression, to demonstrate the meta-gains achieved by the new algorithms developed through meta-substitution.  The comparison involves representative algorithms\footnote{Matlab codes are available at \url{https://github.com/ZhihanZhu-math/Optimal_Pursuit_public}.} from the three categories introduced in \cref{Section:meta}: OP, CoSaOP, and OP-(A)BESS, evaluated against classical subset selection methods: OMP, CoSaMP, and (A)BESS. 
Their performance is evaluated across multiple metrics, including the number of successful recoveries, NMSE,  $R^2$
, and runtime, highlighting the superiority of the enhanced algorithms from various perspectives.$~$

\begin{figure*}[ht]
	\begin{center}
		\centerline{\includegraphics[width=2.1\columnwidth]{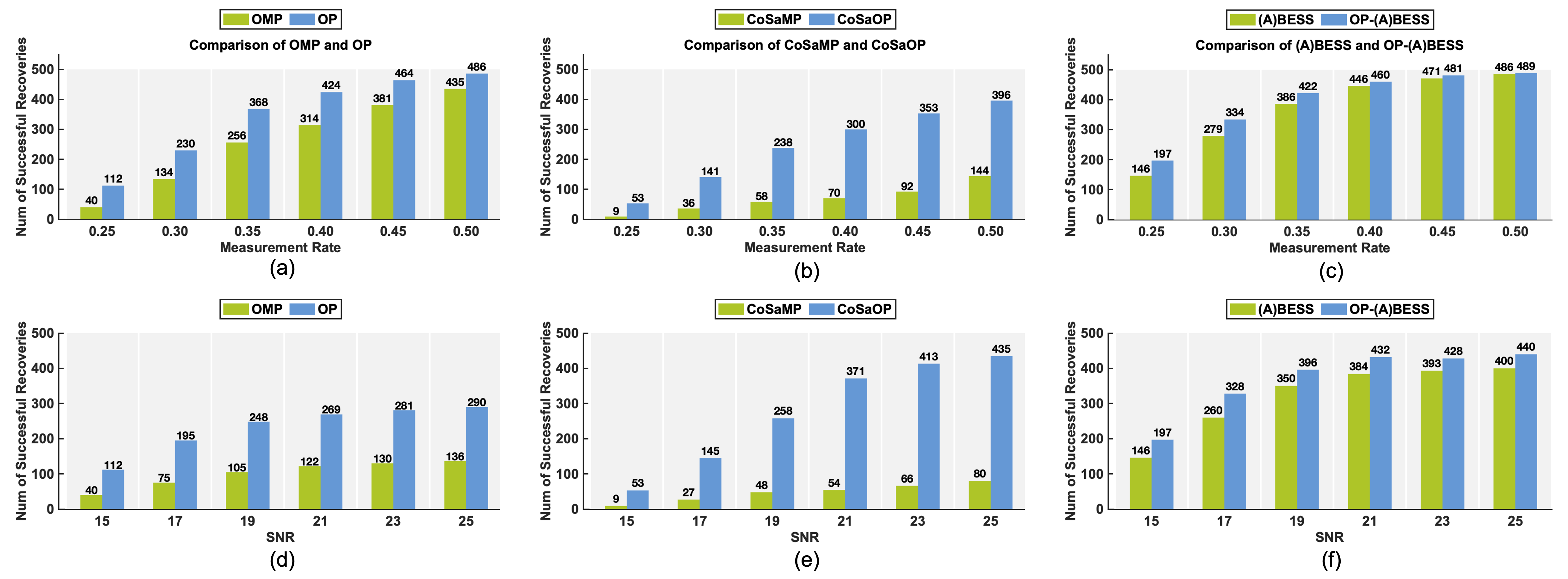}}
		\caption{
			Meta-gain comparison of three kinds of subset selection algorithms. Row one: different sampling rates (SNR = 15). Row two: Varying SNRs (measurement rate = 0.25).}
		\label{bar_all2}
	\end{center}
	\vskip -0.3in
\end{figure*}
\begin{table*}[htbp]
	\centering
	\scriptsize
	\setlength{\tabcolsep}{2pt} 
	\renewcommand{\arraystretch}{1.1} 
	\caption{Reconstruction NMSE  (mean $\pm$ std) for Signals from \textit{AudioSet}. Meta-gains are highlighted in \textcolor{red}{\textbf{red}}, with the best in \textbf{bold} and the second-best \underline{underlined}. (Audio 1-4: -0SdAVK79lg.wav, \_qxgIqI0uA.wav, \_0bN5mYLXb0.wav, \_0Jd6JJeyJ4.wav)}
	\begin{tabular*}{\textwidth}{@{\extracolsep{\fill}}c|ccc|ccc|ccc}
		\toprule
		& \multicolumn{3}{c|}{\textbf{NMSE (OMP \& OP)}} 
		& \multicolumn{3}{c|}{\textbf{NMSE (CoSaMP \& CoSaOP)}} 
		& \multicolumn{3}{c}{\textbf{NMSE ((A)BESS \& OP-(A)BESS)}} \\
		\cmidrule(r){2-10}
		\multirow[t]{2}{*}{\textbf{Audio Set}} 
		& \textbf{OMP} & \textbf{OP} & \textbf{Gains} 
		& \textbf{CoSaMP} & \textbf{CoSaOP} & \textbf{Gains} 
		& \textbf{(A)BESS} & \textbf{OP-(A)BESS} & \textbf{Gains} \\
		\midrule
		Audio 1
		& 9.45E-04 (1.35E-03) & \underline{7.69E-04 (5.78E-04)} & \textbf{\textcolor{red}{19\%}}
		& 2.36E-03 (1.22E-03) & 1.64E-03 (1.36E-03) & \textbf{\textcolor{red}{31\%}}
		& 1.41E-03 (7.89E-03) & \textbf{6.22E-04} (\textbf{2.64E-04}) & \textbf{\textcolor{red}{56\%}} \\
		Audio 2 
		& 5.79E-03 (7.89E-03) & 5.49E-03 (8.87E-03) & \textbf{\textcolor{red}{5\%}}
		& \underline{1.87E-03 (6.05E-04)} & \textbf{6.36E-04} (\textbf{2.01E-04}) & \textbf{\textcolor{red}{66\%}}
		& 6.71E-03 (6.07E-02) & \textbf{6.36E-04} (\textbf{2.01E-04}) & \textbf{\textcolor{red}{91\%}} \\
		Audio 3
		& 1.46E-03 (3.13E-03) & \underline{1.18E-03} (2.14E-03) & \textbf{\textcolor{red}{19\%}}
		& 1.54E-03 \underline{(5.86E-04) }& \textbf{5.52E-04} (\textbf{2.03E-04}) & \textbf{\textcolor{red}{64\%}}
		& 2.84E-03 (2.29E-02) & \textbf{5.52E-04} (\textbf{2.03E-04}) & \textbf{\textcolor{red}{81\%}} \\
		Audio 4 
		& 4.05E-02 (1.03E-01) & 3.55E-02 (9.70E-02) & \textbf{\textcolor{red}{12\%}}
		& \underline{2.85E-03 (9.84E-04)} & \textbf{8.22E-04} (\textbf{2.22E-04}) & \textbf{\textcolor{red}{71\%}}
		& 2.65E-02 (1.07E-01) & 1.86E-02 (1.12E-01) & \textbf{\textcolor{red}{30\%}} \\
		\bottomrule
	\end{tabular*}
	\label{table_audio}
\end{table*}

\vspace{-2mm}
\subsection{Compressed Sensing}

In this task, there is a ground truth signal for $\boldsymbol{\beta}$. The goal is to recover the high-dimensional sparse vector \(\boldsymbol{\beta} \in \mathbb{R}^{p \times 1}\), given the noisy low-dimensional observation \(\mathbf{y} \in \mathbb{R}^{n \times 1}\) and design matrix \(\mathbf{X} \in \mathbb{R}^{n \times p}\) (\(n \ll p\)).

\subsubsection{Synthetic Sparse Data}
In this experiment, we randomly generate $\boldsymbol{\beta}$ with a dimensionality of \( p = 200 \) and a sparsity level of \( K = 10 \). The design matrix \(\mathbf{X}\) is a \( n \times p \) random Gaussian matrix. Let \( S^* \) represent the true support set of the signal and \( \hat{S} \) the estimated support set, recovery is deemed successful if \( \hat{S} = S^* \). For each algorithm, we conduct 500 independent runs and record the number of successful recoveries as shown in \cref{bar_all2}. The first row illustrates the variation in the number of successful recoveries for three groups of algorithms as the sampling rate increases from 25\% to 50\% under a fixed SNR of 15. The second row, in contrast, fixes the sampling rate at 25\% and shows how the number of successful recoveries changes as the SNR increases from 15 to 25.

As a result, in the most extreme compressed scenario with a measurement rate of 0.25, OP achieves {nearly $ \boldsymbol{3\times}$} improvement over OMP. CoSaOP consistently outperforms CoSaMP by at least $ \boldsymbol{4\times}$ in all scenarios, with a maximum improvement of nearly $ \boldsymbol{7\times}$ under SNR = 21, as shown in subplot (e). While (A)BESS, as the state-of-the-art algorithm for this task, demonstrates great performance across various settings, OP-(A)BESS still manages to achieve observable meta-gains beyond it. Even in near-saturation scenarios where (A)BESS achieves almost complete recovery, the meta-gain remains evident. Thus, the proposed algorithms (blue bars) consistently outperform their classical counterparts (green bars) across all experimental conditions, delivering significant improvements in recovery rates.

We also compare the computational time in \cref{time}. In \cref{high corr}, we conduct further experiment under high feature correlations with small sample sizes, high-dimensional features, and high noise levels. The experimental results further highlight the signiaficant advantages of the Optimal Pursuit-enhanced algorithms in extreme scenarios.

%
%
%
%
%
%
%
%
\vspace{-1mm}
\subsubsection{Signals from \textit{AudioSet}}

\vspace{-1mm}
After transforming audio signals into the DCT domain, they exhibit transform sparsity \cite{donoho2006compressed}. Therefore, we test our method using real-world audio data. The data presented here is randomly sampled from the \textit{AudioSet} dataset \cite{gemmeke2017audio}, consisting of 4 audio signals with full dimensionality \( p = 480 \). These signals have approximately 20–40 non-zero entries (\( K \)) in the DCT domain, with the number of observations fixed at \( n = 150 \). The normalized mean squared error (NMSE), defined as \( \text{NMSE} = \|\hat{\boldsymbol{\beta}} - \boldsymbol{\beta}^*\|_2^2/\|\boldsymbol{\beta}^*\|_2^2 \)
, is used to quantify the recovery performance. We conducted 100 random experiments, and the results are summarized in \cref{table_audio}.

The proposed algorithms consistently exhibit significant performance improvements. In particular, OP shows observable meta-gain compared to OMP. Although CoSaMP algorithm already performs well on this task (with small NMSE and std), CoSaOP still achieves \textbf{near-order-of-magnitude} improvement. Similarly, OP-(A)BESS achieves near-order-of-magnitude improvements \textbf{in both mean and std of NMSE} in multiple trials, indicating that the algorithm's accuracy and stability have been significantly enhanced.

\begin{figure*}[ht]
	\begin{center}
		\centerline{\includegraphics[width=2.1\columnwidth]{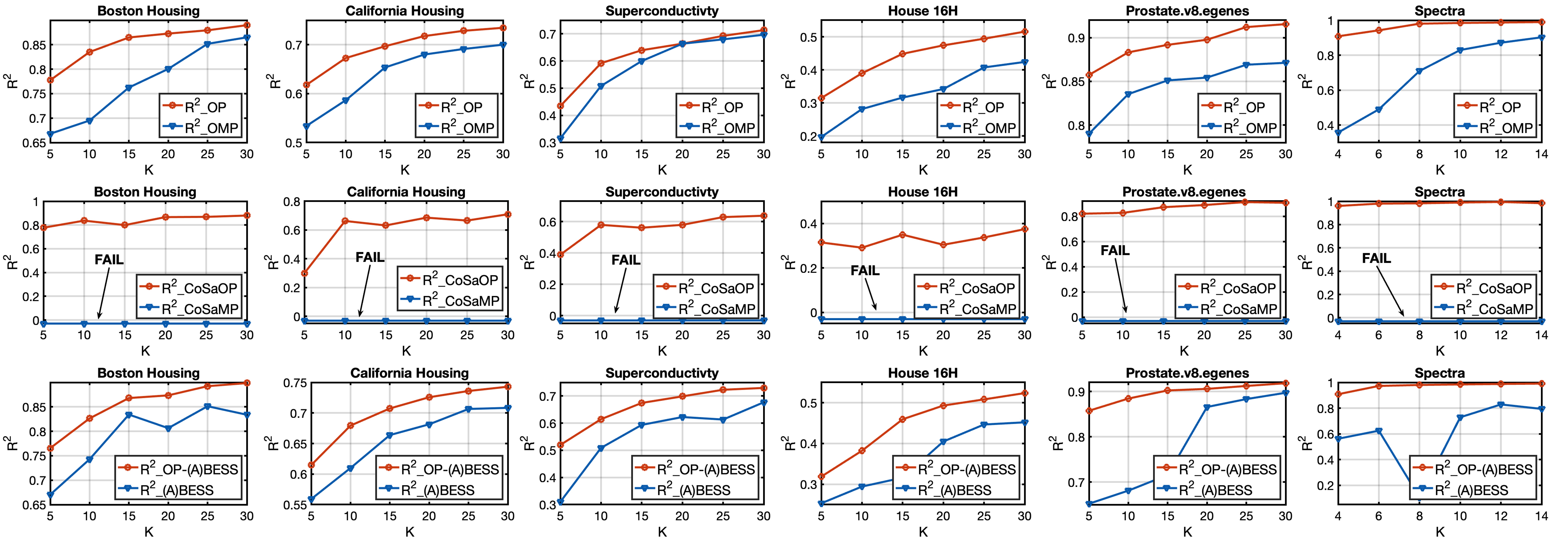}}
		\vspace{-2mm}
		\caption{
			Rows 1–3 present the meta-gains in feature representation capability (\(R^2\), closer to 1 is better) for the Boston Housing, California Housing, Superconductivity datasets, House 16H, Prostate.v8.egenes, and Spectra datasets, respectively, across three algorithms as the number of selected features \(K\) varies. See Appendix \ref{cv1} for cross validation performance in prediction.}
		\label{r2_all}
	\end{center}
	\vskip -0.3in
\end{figure*}

\vspace{-1mm}
\subsection{Sparse Regression}

In sparse regression tasks, \(\boldsymbol{\beta}\) does not have a ground truth. 
The goal is to select sparse features that provide a better explanation of target variable \(\mathbf{y}\). Therefore, similar to the metric used in sparse regression evaluations in \cite{qian2017subset, das2018approximate}, we quantify the explanatory ability of the features using Coefficient of Determination:  \(R^2 = 1 - \sum_{i=1}^n (y_i - \hat{y}_i)^2/\sum_{i=1}^n (y_i - \bar{\mathbf{y}})^2\).

We utilize six real-world datasets in our experiments: (1) Boston Housing Data \cite{scikit-learn}, (2) California Housing Data \cite{scikit-learn}, (3) Superconductivity Data \cite{superconductivty_data_464}, (4) House 16H \cite{openml574}, (5) Prostate.v8.egen \cite{lin2024robust,hastie2017elements}, and (6) Spectra \cite{MathWorksStatsML}. For the first five datasets, which have a relatively small number of features, we augment the feature space by constructing polynomial features. The dimensions of the six datasets are as follows: \(n_1 = 506\), \(p_1 = 104\); \(n_2 = 500\) (randomly sampled), \(p_2 = 164\); \(n_3 = 500\) (randomly sampled), \(p_3 = 80\); \(n_4 = 500\) (randomly sampled), \(p_4 = 153\); \(n_5 = 500\) (randomly sampled), \(p_5 = 253\); and \(n_6 = 60\), \(p_6 = 401\). 

The experimental results in \cref{r2_all} demonstrate the enhanced algorithms driven by new criteria consistently yield significant meta-gains across different datasets and varying numbers of selected features \(K\). The enhanced algorithms outperform the original algorithms, achieving gains equivalent to selecting 10 more features. Notably, OP and OP-(A)BESS demonstrate approximately a 0.1 improvement in \(R^2\). And  even though CoSaMP fails in this regression task (see \cref{reason} for detailed reasons), the enhanced CoSaOP remains effective and delivers strong performance.

\vspace{-2mm}
\section{Discussion}
\vspace{-1mm}
In this section, we discuss the potential applications of the optimal pursuit idea in a broader range of machine learning tasks and scenarios.

\subsection{Best Subset Selection with Ultra-high Dimensions: Optimal Gradient Pursuit}
As noted in Remark \ref{computation efficiency} and Remark \ref{2}, the algorithmic complexity of the original methods and our optimal pursuit-enhanced algorithm remains at the same order of magnitude, since both rely on solving the least squares problem over a given subset $S$, which involves a linear system solved via Cholesky decomposition. However, in ultra-high-dimensional settings, solving the least squares problem over a given subset, i.e., solving a linear system, can be computationally prohibitive, affecting both the original and enhanced algorithms. In fact, updating coefficients via least squares within a given subset is equivalent to using the Newton method for updates. \cite{blumensath2008gradient} proposed Gradient Pursuit, which follows the same correlation-based selection strategy in the basis selection step. However, in the coefficient update step, instead of Newton's method, it employs gradient-based updates within a given support set, significantly reducing computational overhead in ultra-high-dimensional scenarios.

Here, we want to emphasize that our proposed optimal pursuit idea can also be applied to Gradient Pursuit, which we refer to as Optimal Gradient Pursuit. Optimal Gradient Pursuit simultaneously considers both support set updates (feature individual significance) and coefficient updates (feature interaction), while maintaining the same order of computational complexity as Gradient Pursuit.  

For the derivation of the Optimal Gradient Pursuit criteria, the proof of its theoretical properties and the numerical experiments, please refer to the Appendices \ref{OGP2}-\ref{OGP5}. Optimal Gradient Pursuit serves as an acceleration scheme for Optimal Pursuit, which is applicable to ultra-high dimensional settings, and the gradient-based updates is practical for best subset selection problems with general objective functions.

\vspace{-2mm}
\subsection{Unsupervised Learning: Column Subset Selection}
Column Subset Selection (CSS) and PCA are both important dimensionality reduction methods with widespread applications in unsupervised learning \cite{belhadji2020determinantal}. The goal of CSS is to select a subset of important columns (features) from a dataset that can better represent the entire dataset, formulated as:
%
\begin{align}
	\mathop{\min}\limits_{S, \mathbf{B}} &~ \|\mathbf{X} - \mathbf{X}_S\mathbf{B}\|_F^2 \notag\\
	\stt &~|S| \le K. \notag
\end{align}

\vspace{-3mm}
In fact, this problem can also be viewed as a special case of best subset selection problem. We have extended the optimal pursuit criterion to the CSS task, demonstrating the advantages of our proposed criteria over classical criteria in this setting. For the derivation of the criteria and numerical experiments, please refer to the Appendix \ref{CSS}.

\vspace{-2mm}
\subsection{Complex Signal Processing}
Although our paper primarily discusses these theories and methods in the real domain, they can be directly extended to the complex domain. A classic example in complex signal processing is line spectrum estimation. This problem can be viewed as a special case of feature selection, where the features are continuous in the frequency domain and exhibit a specific structure:  $\mathbf{v}(f) = \left(1, e^{-j 2\pi f}, e^{-j 2\pi 2f}, \dots, e^{-j 2\pi (n-1)f} \right)^T$. The goal is to decompose a given complex signal into its frequency components, a problem widely applied in modern wireless communications \cite{mamandipoor2016newtonized}. For the numerical experiments of Optimal Pursuit in complex signal processing, please refer to the Appendix \ref{CSP}.

\vspace{-2mm}
\section{Conclusion}
This paper proposed two novel objective-based criteria for
 feature selection and elimination in best subset selection. By revisiting classical criteria in traditional algorithms through the lens of block coordinate descent, we revealed that they only reflect a one-step variation of the objective function. Building on this, we formulated exact optimization subproblems for feature selection and elimination, deriving explicit solutions using forward and backward matrix inversion. The proposed criteria account for both individual feature significance and interactions, proving optimal for subset selection. Replacing classical criteria with the proposed ones, we developed enhanced algorithms that retain the original theoretical guarantees while achieving notable performance gains across various tasks, such as compressed sensing and sparse regression, all without added computational cost.

The results affirm the advantages of the new criteria both theoretically and practically, opening new avenues for improving best subset selection algorithms. Future work may consider: (1) integrating the proposed criteria into arbitrary greedy subset selection algorithms to develop enhanced methods and application on structured sparse learning \cite{huang2009learning}, (2) developing optimal selection and elimination criteria for general objective functions, (3) investigating  statistical inference theories of the new criteria.

\clearpage
\section*{Acknowledgements}
This research was supported by National Key R\&D Program of China under grant 2021YFA1003303.

The authors would like to thank all the reviewers for their helpful comments. Their suggestions have helped us enhance our theories and experiments to present a more comprehensive demonstration of the effectiveness of our work.

\section*{Impact Statement}

This paper presents work whose goal is to advance the field of 
Machine Learning. There are many potential societal consequences 
of our work, none which we feel must be specifically highlighted here.

\nocite{langley00}

\bibliography{example_paper}

\begin{thebibliography}{39}
\providecommand{\natexlab}[1]{#1}
\providecommand{\url}[1]{\texttt{#1}}
\expandafter\ifx\csname urlstyle\endcsname\relax
  \providecommand{\doi}[1]{doi: #1}\else
  \providecommand{\doi}{doi: \begingroup \urlstyle{rm}\Url}\fi

\bibitem[Belhadji et~al.(2020)Belhadji, Bardenet, and
  Chainais]{belhadji2020determinantal}
Belhadji, A., Bardenet, R., and Chainais, P.
\newblock A determinantal point process for column subset selection.
\newblock \emph{Journal of Machine Learning Research}, 21\penalty0
  (197):\penalty0 1--62, 2020.

\bibitem[Blumensath \& Davies(2008)Blumensath and
  Davies]{blumensath2008gradient}
Blumensath, T. and Davies, M.~E.
\newblock Gradient pursuits.
\newblock \emph{IEEE Transactions on Signal Processing}, 56\penalty0
  (6):\penalty0 2370--2382, 2008.

\bibitem[Blumensath \& Davies(2009)Blumensath and
  Davies]{blumensath2009iterative}
Blumensath, T. and Davies, M.~E.
\newblock Iterative hard thresholding for compressed sensing.
\newblock \emph{Applied and Computational Harmonic Analysis}, 27\penalty0
  (3):\penalty0 265--274, 2009.

\bibitem[Chen et~al.(2001)Chen, Donoho, and Saunders]{chen2001atomic}
Chen, S.~S., Donoho, D.~L., and Saunders, M.~A.
\newblock Atomic decomposition by basis pursuit.
\newblock \emph{SIAM Review}, 43\penalty0 (1):\penalty0 129--159, 2001.

\bibitem[Das \& Kempe(2011)Das and Kempe]{10.5555/3104482.3104615}
Das, A. and Kempe, D.
\newblock Submodular meets spectral: greedy algorithms for subset selection,
  sparse approximation and dictionary selection.
\newblock In \emph{Proceedings of the 28th International Conference on
  International Conference on Machine Learning}, pp.\  1057–1064, 2011.

\bibitem[Das \& Kempe(2018)Das and Kempe]{das2018approximate}
Das, A. and Kempe, D.
\newblock Approximate submodularity and its applications: Subset selection,
  sparse approximation and dictionary selection.
\newblock \emph{Journal of Machine Learning Research}, 19\penalty0
  (3):\penalty0 1--34, 2018.

\bibitem[Davis et~al.(1997)Davis, Mallat, and Avellaneda]{davis1997adaptive}
Davis, G., Mallat, S., and Avellaneda, M.
\newblock Adaptive greedy approximations.
\newblock \emph{Constructive Approximation}, 13:\penalty0 57--98, 1997.

\bibitem[Donoho(2006)]{donoho2006compressed}
Donoho, D.~L.
\newblock Compressed sensing.
\newblock \emph{IEEE Transactions on Information Theory}, 52\penalty0
  (4):\penalty0 1289--1306, 2006.

\bibitem[Dy \& Brodley(2000)Dy and Brodley]{article}
Dy, J. and Brodley, C.
\newblock Feature subset selection and order identification for unsupervised
  learning.
\newblock \emph{Proceedings of the Seventeenth International Conference on
  Machine Learning}, 2000.

\bibitem[Efron et~al.(2004)Efron, Hastie, Johnstone, and
  Tibshirani]{efron2004least}
Efron, B., Hastie, T., Johnstone, I., and Tibshirani, R.
\newblock Least angle regression.
\newblock \emph{The Annals of Statistics}, 32\penalty0 (2):\penalty0 407--451,
  2004.

\bibitem[Fan \& Li(2001)Fan and Li]{fan2001variable}
Fan, J. and Li, R.
\newblock Variable selection via nonconcave penalized likelihood and its oracle
  properties.
\newblock \emph{Journal of the American Statistical Association}, 96\penalty0
  (456):\penalty0 1348--1360, 2001.

\bibitem[Feige(1998)]{feige1998threshold}
Feige, U.
\newblock A threshold of ln n for approximating set cover.
\newblock \emph{Journal of the ACM (JACM)}, 45\penalty0 (4):\penalty0 634--652,
  1998.

\bibitem[Foucart(2011)]{foucart2011hard}
Foucart, S.
\newblock Hard thresholding pursuit: An algorithm for compressive sensing.
\newblock \emph{SIAM Journal on Numerical Analysis}, 49\penalty0 (6):\penalty0
  2543--2563, 2011.

\bibitem[Gemmeke et~al.(2017)Gemmeke, Ellis, Freedman, Jansen, Lawrence, Moore,
  Plakal, and Ritter]{gemmeke2017audio}
Gemmeke, J.~F., Ellis, D.~P., Freedman, D., Jansen, A., Lawrence, W., Moore,
  R.~C., Plakal, M., and Ritter, M.
\newblock Audio set: An ontology and human-labeled dataset for audio events.
\newblock In \emph{2017 IEEE International Conference on Acoustics, Speech and
  Signal Processing (ICASSP)}, pp.\  776--780. IEEE, 2017.

\bibitem[Hamidieh(2018)]{superconductivty_data_464}
Hamidieh, K.
\newblock {Superconductivty Data}.
\newblock UCI Machine Learning Repository, 2018.
\newblock {DOI}: https://doi.org/10.24432/C53P47.

\bibitem[Hastie et~al.(2017)Hastie, Tibshirani, and
  Friedman]{hastie2017elements}
Hastie, T., Tibshirani, R., and Friedman, J.
\newblock The elements of statistical learning: data mining, inference, and
  prediction, 2017.

\bibitem[Hazimeh \& Mazumder(2020)Hazimeh and Mazumder]{hazimeh2020fast}
Hazimeh, H. and Mazumder, R.
\newblock Fast best subset selection: Coordinate descent and local
  combinatorial optimization algorithms.
\newblock \emph{Operations Research}, 68\penalty0 (5):\penalty0 1517--1537,
  2020.

\bibitem[Huang et~al.(2009)Huang, Zhang, and Metaxas]{huang2009learning}
Huang, J., Zhang, T., and Metaxas, D.
\newblock Learning with structured sparsity.
\newblock In \emph{Proceedings of the 26th Annual International Conference on
  Machine Learning}, pp.\  417--424, 2009.

\bibitem[Kohavi \& John(1997)Kohavi and John]{kohavi1997wrappers}
Kohavi, R. and John, G.~H.
\newblock Wrappers for feature subset selection.
\newblock \emph{Artificial Intelligence}, 97\penalty0 (1-2):\penalty0 273--324,
  1997.

\bibitem[Lin \& Pan(2024)Lin and Pan]{lin2024robust}
Lin, Z. and Pan, W.
\newblock A robust cis-mendelian randomization method with application to drug
  target discovery.
\newblock \emph{Nature Communications}, 15\penalty0 (1):\penalty0 6072, 2024.

\bibitem[Mallat \& Zhang(1993)Mallat and Zhang]{mallat1993matching}
Mallat, S.~G. and Zhang, Z.
\newblock Matching pursuits with time-frequency dictionaries.
\newblock \emph{IEEE Transactions on Signal Processing}, 41\penalty0
  (12):\penalty0 3397--3415, 1993.

\bibitem[Mamandipoor et~al.(2016)Mamandipoor, Ramasamy, and
  Madhow]{mamandipoor2016newtonized}
Mamandipoor, B., Ramasamy, D., and Madhow, U.
\newblock Newtonized orthogonal matching pursuit: Frequency estimation over the
  continuum.
\newblock \emph{IEEE Transactions on Signal Processing}, 64\penalty0
  (19):\penalty0 5066--5081, 2016.

\bibitem[Miller(2002)]{miller2002subset}
Miller, A.
\newblock \emph{Subset selection in regression}.
\newblock Chapman and Hall/CRC, 2002.

\bibitem[Needell \& Tropp(2009)Needell and Tropp]{NEEDELL2009301}
Needell, D. and Tropp, J.
\newblock Cosamp: Iterative signal recovery from incomplete and inaccurate
  samples.
\newblock \emph{Applied and Computational Harmonic Analysis}, 26\penalty0
  (3):\penalty0 301--321, 2009.

\bibitem[Pati et~al.(1993)Pati, Rezaiifar, and
  Krishnaprasad]{pati1993orthogonal}
Pati, Y.~C., Rezaiifar, R., and Krishnaprasad, P.~S.
\newblock Orthogonal matching pursuit: Recursive function approximation with
  applications to wavelet decomposition.
\newblock In \emph{Proceedings of 27th Asilomar Conference on Signals, Systems
  and Computers}, pp.\  40--44. IEEE, 1993.

\bibitem[Pedregosa et~al.(2011)Pedregosa, Varoquaux, Gramfort, Michel, Thirion,
  Grisel, Blondel, Prettenhofer, Weiss, Dubourg, Vanderplas, Passos,
  Cournapeau, Brucher, Perrot, and Duchesnay]{scikit-learn}
Pedregosa, F., Varoquaux, G., Gramfort, A., Michel, V., Thirion, B., Grisel,
  O., Blondel, M., Prettenhofer, P., Weiss, R., Dubourg, V., Vanderplas, J.,
  Passos, A., Cournapeau, D., Brucher, M., Perrot, M., and Duchesnay, E.
\newblock Scikit-learn: Machine learning in {P}ython.
\newblock \emph{Journal of Machine Learning Research}, 12:\penalty0 2825--2830,
  2011.

\bibitem[Qian et~al.(2015)Qian, Yu, and Zhou]{qian2015subset}
Qian, C., Yu, Y., and Zhou, Z.-H.
\newblock Subset selection by pareto optimization.
\newblock \emph{Advances in Neural Information Processing Systems}, 28, 2015.

\bibitem[Qian et~al.(2017)Qian, Shi, Yu, Tang, and Zhou]{qian2017subset}
Qian, C., Shi, J.-C., Yu, Y., Tang, K., and Zhou, Z.-H.
\newblock Subset selection under noise.
\newblock \emph{Advances in Neural Information Processing Systems}, 30, 2017.

\bibitem[{The MathWorks, Inc.}(2025)]{MathWorksStatsML}
{The MathWorks, Inc.}
\newblock \emph{Statistics and Machine Learning Toolbox}, 2025.

\bibitem[Tibshirani(1996)]{tibshirani1996regression}
Tibshirani, R.
\newblock Regression shrinkage and selection via the lasso.
\newblock \emph{Journal of the Royal Statistical Society Series B: Statistical
  Methodology}, 58\penalty0 (1):\penalty0 267--288, 1996.

\bibitem[Tohidi et~al.(2025)Tohidi, Coutino, and Gesbert]{tohidi2025revisiting}
Tohidi, E., Coutino, M., and Gesbert, D.
\newblock Revisiting matching pursuit: Beyond approximate submodularity.
\newblock \emph{Signal Processing}, 226:\penalty0 109638, 2025.

\bibitem[Tropp \& Gilbert(2007)Tropp and Gilbert]{tropp2007signal}
Tropp, J.~A. and Gilbert, A.~C.
\newblock Signal recovery from random measurements via orthogonal matching
  pursuit.
\newblock \emph{IEEE Transactions on Information Theory}, 53\penalty0
  (12):\penalty0 4655--4666, 2007.

\bibitem[Vanschoren(2014)]{openml574}
Vanschoren, J.
\newblock {OpenML Dataset 574: House 16H}, 2014.

\bibitem[Wang et~al.(2024)Wang, Zhang, Du, Zhang, and Chu]{wang2024survey}
Wang, J., Zhang, B., Du, Q., Zhang, J., and Chu, D.
\newblock A survey on data selection for {LLM} instruction tuning.
\newblock \emph{arXiv preprint arXiv:2402.05123}, 2024.

\bibitem[Wei et~al.(2015)Wei, Iyer, and Bilmes]{wei2015submodularity}
Wei, K., Iyer, R., and Bilmes, J.
\newblock Submodularity in data subset selection and active learning.
\newblock In \emph{Proceedings of the 32nd International Conference on Machine
  Learning}, pp.\  1954--1963. PMLR, 2015.

\bibitem[Zhang(2010)]{10.1214/09-AOS729}
Zhang, C.-H.
\newblock {Nearly unbiased variable selection under minimax concave penalty}.
\newblock \emph{The Annals of Statistics}, 38\penalty0 (2):\penalty0 894 --
  942, 2010.

\bibitem[Zhu et~al.(2020)Zhu, Wen, Zhu, Zhang, and Wang]{zhu2020polynomial}
Zhu, J., Wen, C., Zhu, J., Zhang, H., and Wang, X.
\newblock A polynomial algorithm for best-subset selection problem.
\newblock \emph{Proceedings of the National Academy of Sciences}, 117\penalty0
  (52):\penalty0 33117--33123, 2020.

\bibitem[Zhu et~al.(2022)Zhu, Wang, Hu, Huang, Jiang, Zhang, Lin, and
  Zhu]{zhu2022abess}
Zhu, J., Wang, X., Hu, L., Huang, J., Jiang, K., Zhang, Y., Lin, S., and Zhu,
  J.
\newblock abess: A fast best-subset selection library in python and {R}.
\newblock \emph{Journal of Machine Learning Research}, 23\penalty0
  (202):\penalty0 1--7, 2022.

\bibitem[Zou(2006)]{zou2006adaptive}
Zou, H.
\newblock The adaptive lasso and its oracle properties.
\newblock \emph{Journal of the American Statistical Association}, 101\penalty0
  (476):\penalty0 1418--1429, 2006.

\end{thebibliography}
\bibliographystyle{icml2025}

\newpage
\appendix
\onecolumn

\section{Proof of \cref{problemP2}}\label{App: p1=p2}

\begin{proof}
	Let $S=\operatorname{supp}(\boldsymbol{\beta})$. Since the solution $\hat{\boldsymbol{\beta}}$ of \eqref{P1} is obtained via least squares on $S$, i.e.,
	\begin{equation}
		\boldsymbol{\beta}_S=\left(\mathbf{X}_S^T \mathbf{X}_S\right)^{-1}\mathbf{X}_S^T\mathbf{y}, ~~~	\boldsymbol{\beta}_{S^c}=0. \label{LS}
	\end{equation}
	Then, substituting \eqref{LS} into \eqref{P1} directly yields \eqref{P2}.
\end{proof}

\section{Proof of Theorem \ref{M1}}\label{App: select eq}
\begin{proof}
	Denoting $S_{k-1}=\operatorname{supp}(\boldsymbol{\beta}^{k-1})	 ~\text{and} ~ S=\operatorname{supp}(\boldsymbol{\beta})$, we can rewrite \eqref{P2} as follows:
	\begin{align}
		&\mathop{\argmax}\limits_{\boldsymbol{\beta}}~~ \mathbf{y}^T \mathbf{X}_S\left(\mathbf{X}_S^T \mathbf{X}_S\right)^{-1}\mathbf{X}_S^T\mathbf{y}  \\
		&	\stt  ~~ 
		S_{k-1}\subset S,~~ \vert S\vert =\vert S_{k-1}\vert+1 \notag,
	\end{align}
	where the inverse term $\left(\mathbf{X}_S^T \mathbf{X}_S\right)^{-1}$ can be expressed as
	\begin{equation*}
		\left(\mathbf{X}_S^T \mathbf{X}_S\right)^{-1}=\left(\begin{array}{c}
			\mathbf{X}_{S_{k-1}}^T \\
			\mathbf{X}_j^T
		\end{array}\right)
		\begin{pmatrix}
			\mathbf{X}_{S_{k-1}} ~ \mathbf{X}_j 
		\end{pmatrix}=\begin{pmatrix}
			\mathbf{X}_{S_{k-1}}^T \mathbf{X}_{S_{k-1}} & \mathbf{X}_{S_{k-1}}^T \mathbf{X}_j \\
			\mathbf{X}_j^T \mathbf{X}_{S_{k-1}} & \mathbf{X}_j^T \mathbf{X}_j
		\end{pmatrix}.
	\end{equation*}
	According to  \cref{For Inv},  
	\begin{equation*}
		\left( \mathbf{X}_S^T \mathbf{X}_S \right)^{-1} = 
		\begin{pmatrix}
			\mathbf{A}^{-1} + \mathbf{A}^{-1} \mathbf{u} t^{-1} \mathbf{v}^T \mathbf{A}^{-1} & -\mathbf{A}^{-1} \mathbf{u} t^{-1} \\
			- t^{-1} \mathbf{v}^T \mathbf{A}^{-1} & t^{-1}
		\end{pmatrix},
	\end{equation*}
	where \( \mathbf{u} = \mathbf{X}_{S_{k-1}}^T \mathbf{X}_j \), \( \mathbf{v}^T = \mathbf{X}_j^T \mathbf{X}_{S_{k-1}} \), and \( \mathbf{A}^{-1} = \left( \mathbf{X}_{S_{k-1}}^T \mathbf{X}_{S_{k-1}} \right)^{-1} \), which was computed in step \( k-1 \) and is already known. Additionally, \( t \) is given by  
	\begin{equation*}
		t = \mathbf{X}_j^T \mathbf{X}_j - \mathbf{v}^T \mathbf{A}^{-1} \mathbf{u} = \mathbf{X}_j^T \mathbf{X}_j - \mathbf{X}_j^T \mathbf{X}_{S_{k-1}} \left( \mathbf{X}_{S_{k-1}}^T \mathbf{X}_{S_{k-1}} \right)^{-1} \mathbf{X}_{S_{k-1}}^T \mathbf{X}_j.
	\end{equation*}
	Then, $\mathbf{X}_S\left(\mathbf{X}_S^T \mathbf{X}_S\right)^{-1}\mathbf{X}_S^T$ can be reformulated as
	\begin{align*}
		&\mathbf{X}_S\left(\mathbf{X}_S^T \mathbf{X}_S\right)^{-1}\mathbf{X}_S^T
		=
		\begin{pmatrix}
			\mathbf{X}_{S_{k-1}} ~ \mathbf{X}_j 
		\end{pmatrix}
		\begin{pmatrix}
			\mathbf{A}^{-1} + \mathbf{A}^{-1} \mathbf{u} t^{-1} \mathbf{v}^T \mathbf{A}^{-1} & -\mathbf{A}^{-1} \mathbf{u} t^{-1} \\
			- t^{-1} \mathbf{v}^T \mathbf{A}^{-1} & t^{-1}
		\end{pmatrix}
		\left(\begin{array}{c}
			\mathbf{X}_{S_{k-1}}^T \\
			\mathbf{X}_j^T
		\end{array}\right)\\
		=&		\mathbf{X}_{S_{k-1}} \left(	\mathbf{A}^{-1} + \mathbf{A}^{-1} \mathbf{u} t^{-1} \mathbf{v}^T \mathbf{A}^{-1} \right) \mathbf{X}_{S_{k-1}}^T 
		-\mathbf{X}_{S_{k-1}}\mathbf{A}^{-1} \mathbf{u} t^{-1}	\mathbf{X}_j^T-\mathbf{X}_j t^{-1} \mathbf{v}^T \mathbf{A}^{-1}\mathbf{X}_{S_{k-1}}^T+t^{-1}	\mathbf{X}_j	\mathbf{X}_j^T\\
		=&\mathbf{X}_{S_{k-1}}\left(\mathbf{X}_{S_{k-1}}^T \mathbf{X}_{S_{k-1}}\right)^{-1}\mathbf{X}_{S_{k-1}}+\frac{\mathbf{X}_{S_{k-1}}\left(\mathbf{X}_{S_{k-1}}^T \mathbf{X}_{S_{k-1}}\right)^{-1}\mathbf{X}_{S_{k-1}}^T \mathbf{X}_j	\mathbf{X}_j^T \mathbf{X}_{S_{k-1}}\left(\mathbf{X}_{S_{k-1}}^T \mathbf{X}_{S_{k-1}}\right)^{-1}\mathbf{X}_{S_{k-1}}^T }{	\mathbf{X}_j^T	\mathbf{X}_j-	\mathbf{X}_j^T\mathbf{X}_{S_{k-1}}\left(\mathbf{X}_{S_{k-1}}^T \mathbf{X}_{S_{k-1}}\right)^{-1}\mathbf{X}_{S_{k-1}}^T\mathbf{X}_j}\\
		&-\frac{\mathbf{X}_{S_{k-1}}\left(\mathbf{X}_{S_{k-1}}^T \mathbf{X}_{S_{k-1}}\right)^{-1}\mathbf{X}_{S_{k-1}}^T \mathbf{X}_j	\mathbf{X}_j^T
			+\mathbf{X}_j	\mathbf{X}_j^T\mathbf{X}_{S_{k-1}}\left(\mathbf{X}_{S_{k-1}}^T \mathbf{X}_{S_{k-1}}\right)^{-1} \mathbf{X}_{S_{k-1}}^T - \mathbf{X}_j	\mathbf{X}_j^T}{\mathbf{X}_j^T	\mathbf{X}_j-	\mathbf{X}_j^T\mathbf{X}_{S_{k-1}}\left(\mathbf{X}_{S_{k-1}}^T \mathbf{X}_{S_{k-1}}\right)^{-1}\mathbf{X}_{S_{k-1}}^T\mathbf{X}_j}\\
		=&\mathbf{X}_{S_{k-1}}\left(\mathbf{X}_{S_{k-1}}^T \mathbf{X}_{S_{k-1}}\right)^{-1}\mathbf{X}_{S_{k-1}}\\ &+\frac{\left(\mathbf{I}-\mathbf{X}_{S_{k-1}}\left(\mathbf{X}_{S_{k-1}}^T \mathbf{X}_{S_{k-1}}\right)^{-1}\mathbf{X}_{S_{k-1}}^T\right)\mathbf{X}_j \mathbf{X}_j^T \left(\mathbf{I}-\mathbf{X}_{S_{k-1}}\left(\mathbf{X}_{S_{k-1}}^T \mathbf{X}_{S_{k-1}}\right)^{-1}\mathbf{X}_{S_{k-1}}^T\right) }{\mathbf{X}_j^T\left(\mathbf{I}-\mathbf{X}_{S_{k-1}}\left(\mathbf{X}_{S_{k-1}}^T \mathbf{X}_{S_{k-1}}\right)^{-1}\mathbf{X}_{S_{k-1}}^T\right)\mathbf{X}_j}.
	\end{align*}
	Therefore, problem \eqref{P2} is equivalent to 
	\begin{align}
		&\mathop{\argmax}_{j\in S_{k-1}^c} \frac{\mathbf{y}^T\left(\mathbf{I}-\mathbf{X}_{S_{k-1}}\left(\mathbf{X}_{S_{k-1}}^T \mathbf{X}_{S_{k-1}}\right)^{-1}\mathbf{X}_{S_{k-1}}^T\right)\mathbf{X}_j \mathbf{X}_j^T \left(\mathbf{I}-\mathbf{X}_{S_{k-1}}\left(\mathbf{X}_{S_{k-1}}^T \mathbf{X}_{S_{k-1}}\right)^{-1}\mathbf{X}_{S_{k-1}}^T\right)\mathbf{y} }{\mathbf{X}_j^T\left(\mathbf{I}-\mathbf{X}_{S_{k-1}}\left(\mathbf{X}_{S_{k-1}}^T \mathbf{X}_{S_{k-1}}\right)^{-1}\mathbf{X}_{S_{k-1}}^T\right)\mathbf{X}_j} \notag\\
		=&\mathop{\argmax}_{j\in S_{k-1}^c} \frac{\left({\mathbf{r}^k}^T \mathbf{X}_j\right)^2}{\mathbf{X}_j^T\left(\mathbf{I}-\mathbf{X}_{S_{k-1}}\left(\mathbf{X}_{S_{k-1}}^T \mathbf{X}_{S_{k-1}}\right)^{-1}\mathbf{X}_{S_{k-1}}^T\right)\mathbf{X}_j},
	\end{align}
	where $\mathbf{r}^k=\mathbf{y}-\mathbf{X}_{S_{k-1}}\boldsymbol{\beta}^{k-1}$ represents the residual obtained from the $(k-1)$-th update, and $\boldsymbol{\beta}^{k-1}=\left(\mathbf{X}_{S_{k-1}}^T \mathbf{X}_{S_{k-1}}\right)^{-1}\mathbf{X}_{S_{k-1}}^T\mathbf{y}$ is the coefficient estimate from step $k-1$.
\end{proof}

\section{Proof of Theorem \ref{M2}}\label{App: eliminate eq}
\begin{proof}
	
	Denoting $S_{k-1}=\operatorname{supp}(\boldsymbol{\beta}^{k-1})	 ~\text{and} ~ S=\operatorname{supp}(\boldsymbol{\beta})$, we can rewrite \eqref{Q2} as follows:
	\begin{align}
		&\mathop{\argmax}\limits_{\boldsymbol{\beta}}~~ \mathbf{y}^T \mathbf{X}_S\left(\mathbf{X}_S^T \mathbf{X}_S\right)^{-1}\mathbf{X}_S^T\mathbf{y}  \\
		&	\stt  ~~ 
		S\subset S_{k-1},~~ \vert S\vert =\vert S_{k-1}\vert-1 \notag,
	\end{align}
	where the inverse term $\left(\mathbf{X}_S^T \mathbf{X}_S\right)^{-1}$ is already known, as it was computed in step $k-1$.
	
	Let $\mathbf{e}_i = (\delta_{1i}, \delta_{2i}, \cdots, \delta_{ii}, \cdots, \delta_{\vert S_{k-1}\vert i})^T \in \mathbb{R}^{\vert S_{k-1}\vert}$, for all $i = 1, 2, \cdots, \vert S_{k-1}\vert$, where the Kronecker delta function $\delta_{ij}$ is defined as
	\begin{equation*}
		\delta_{ij} = \begin{cases} 
			1, & \text{if } i = j, \\
			0, & \text{otherwise}.
		\end{cases}
	\end{equation*}
	Let $\mathbf{P}_i=\left(\mathbf{e}_1, \cdots, \mathbf{e}_{i-1}, \mathbf{e}_{i+1}, \cdots, \mathbf{e}_{\vert S_{k-1}\vert} \right)\in \mathbb{R}^{\vert S_{k-1}\vert \times \vert S\vert}$. Then, by  \cref{Back Inv}, when $S=S_{k-1}\backslash \{j\}$, we have
	\begin{equation*}
		\left( \mathbf{X}_S^T \mathbf{X}_S \right)^{-1} =\mathbf{P}_j^T \left(\mathbf{X}_{S_{k-1}}^T \mathbf{X}_{S_{k-1}}\right)^{-1} \mathbf{P}_j-\frac{\mathbf{P}_j^T \left(\mathbf{X}_{S_{k-1}}^T \mathbf{X}_{S_{k-1}}\right)^{-1}\mathbf{e}_j\mathbf{e}_j^T \left(\mathbf{X}_{S_{k-1}}^T \mathbf{X}_{S_{k-1}}\right)^{-1} \mathbf{P}_j}{\mathbf{e}_j^T \left(\mathbf{X}_{S_{k-1}}^T \mathbf{X}_{S_{k-1}}\right)^{-1} \mathbf{e}_j},
	\end{equation*}
	and $\mathbf{X}_S=\mathbf{X}_{S_{k-1}}\mathbf{P}_j$.
	
	Therefore, problem \eqref{Q2} is equivalent to 
	\begin{align}
		\mathop{\argmax}_{j \in S_{k-1}} &\quad \mathbf{y}^T \mathbf{X}_{S_{k-1}} \mathbf{P}_j \mathbf{P}_j^T \left(\mathbf{X}_{S_{k-1}}^T \mathbf{X}_{S_{k-1}}\right)^{-1} \mathbf{P}_j \mathbf{P}_j^T \mathbf{X}_{S_{k-1}}^T \mathbf{y} \notag \\
		&~\quad - \frac{\mathbf{y}^T \mathbf{X}_{S_{k-1}} \mathbf{P}_j \mathbf{P}_j^T \left(\mathbf{X}_{S_{k-1}}^T \mathbf{X}_{S_{k-1}}\right)^{-1} \mathbf{e}_j \mathbf{e}_j^T \left(\mathbf{X}_{S_{k-1}}^T \mathbf{X}_{S_{k-1}}\right)^{-1} \mathbf{P}_j \mathbf{P}_j^T \mathbf{X}_{S_{k-1}}^T \mathbf{y}}{\mathbf{e}_j^T \left(\mathbf{X}_{S_{k-1}}^T \mathbf{X}_{S_{k-1}}\right)^{-1} \mathbf{e}_j} \notag \\
		= \mathop{\argmax}_{j \in S_{k-1}} &\quad \mathbf{y}^T \mathbf{X}_{S_{k-1}} \mathbf{P}_j \mathbf{P}_j^T \left(\left(\mathbf{X}_{S_{k-1}}^T \mathbf{X}_{S_{k-1}}\right)^{-1}-\frac{ \left(\mathbf{X}_{S_{k-1}}^T \mathbf{X}_{S_{k-1}}\right)^{-1} \mathbf{e}_j \mathbf{e}_j^T \left(\mathbf{X}_{S_{k-1}}^T \mathbf{X}_{S_{k-1}}\right)^{-1} }{\mathbf{e}_j^T \left(\mathbf{X}_{S_{k-1}}^T \mathbf{X}_{S_{k-1}}\right)^{-1} \mathbf{e}_j}\right)\mathbf{P}_j \mathbf{P}_j^T \mathbf{X}_{S_{k-1}}^T \mathbf{y} \notag\\
		=\mathop{\argmax}\limits_{j\in S_{k-1}}~~&\mathbf{y}^T\mathbf{X}_{S_{k-1}}\left(\mathbf{I}-\mathbf{e}_j\mathbf{e}_j^T\right) \left(\mathbf{C}_{k-1} -\frac{\mathbf{C}_{k-1}\mathbf{e}_j \mathbf{e}_j^T \mathbf{C}_{k-1}}{\mathbf{e}_j^T\mathbf{C}_{k-1}\mathbf{e}_j}\right)\left(\mathbf{I}-\mathbf{e}_j\mathbf{e}_j^T\right) \mathbf{X}_{S_{k-1}}^T\mathbf{y}. ~~~~~(\text{since}~ \mathbf{P}_j \mathbf{P}_j^T=\mathbf{I}-\mathbf{e}_j\mathbf{e}_j^T.)
	\end{align}
\end{proof}

\section{Proof of Remark \ref{degenerate}}\label{App: degenerate}

\begin{proof}
	When $\mathbf{X}_j$ and $\mathbf{X}_{S_{k-1}\backslash \{j\}}$ are orthogonal, we have $\mathbf{P}_j \left(\mathbf{X}_{S_{k-1}}^T \mathbf{X}_{S_{k-1}}\right)^{-1} \mathbf{e}_j=0$. Therefore, the objective-based elimination criterion \eqref{eliminate new} can be reformulated as
	\begin{equation}
		\eqref{eliminate new}=\mathbf{y}^T \mathbf{X}_{S_{k-1}} \mathbf{P}_j \mathbf{P}_j^T \left(\mathbf{X}_{S_{k-1}}^T \mathbf{X}_{S_{k-1}}\right)^{-1}\mathbf{P}_j \mathbf{P}_j^T \mathbf{X}_{S_{k-1}}^T \mathbf{y}.\label{r1}
	\end{equation}
	Noticing that $ \mathbf{P}_j \mathbf{P}_j^T=\mathbf{I}-\mathbf{e}_j\mathbf{e}_j^T$, we can rewrite \eqref{r1} as 
	\begin{align}
		\eqref{r1} &= \mathbf{y}^T \mathbf{X}_{S_{k-1}} \left(\mathbf{I}-\mathbf{e}_j\mathbf{e}_j^T\right) \left(\mathbf{X}_{S_{k-1}}^T \mathbf{X}_{S_{k-1}}\right)^{-1}\left(\mathbf{I}-\mathbf{e}_j\mathbf{e}_j^T\right) \mathbf{X}_{S_{k-1}}^T \mathbf{y} \notag\\
		& = \left(\mathbf{y}^T \mathbf{X}_{S_{k-1}} \left(\mathbf{I}-\mathbf{e}_j\mathbf{e}_j^T\right) \left(\mathbf{X}_{S_{k-1}}^T \mathbf{X}_{S_{k-1}}\right)^{-1}\right) \left(\mathbf{X}_{S_{k-1}}^T \mathbf{X}_{S_{k-1}}\right)\left(\left(\mathbf{X}_{S_{k-1}}^T \mathbf{X}_{S_{k-1}}\right)^{-1} \left(\mathbf{I}-\mathbf{e}_j\mathbf{e}_j^T\right) \mathbf{X}_{S_{k-1}}^T \mathbf{y}\right).\label{r2}
	\end{align}
	It's also observed that $\left(\mathbf{X}_{S_{k-1}}^T \mathbf{X}_{S_{k-1}}\right)^{-1} \left(\mathbf{I}-\mathbf{e}_j\mathbf{e}_j^T\right) \mathbf{X}_{S_{k-1}}^T \mathbf{y} \overset{(*)}{=}\left(\mathbf{I}-\mathbf{e}_j\mathbf{e}_j^T\right) \left(\mathbf{X}_{S_{k-1}}^T \mathbf{X}_{S_{k-1}}\right)^{-1}\mathbf{X}_{S_{k-1}}^T \mathbf{y} = \left(\mathbf{I}-\mathbf{e}_j\mathbf{e}_j^T\right)\boldsymbol{\beta}^{k-1}
	$, where $(*)$ holds because $\mathbf{X}_j$ is orthogonal to $\mathbf{X}_{S_{k-1}\backslash \{j\}}$. Therefore, \eqref{r2} can be further expressed as follows:
	\begin{equation}
		\eqref{r2} = \left(\boldsymbol{\beta}^{k-1}\right)^T\left(\mathbf{I}-\mathbf{e}_j\mathbf{e}_j^T\right) \left(\mathbf{X}_{S_{k-1}}^T \mathbf{X}_{S_{k-1}}\right) \left(\mathbf{I}-\mathbf{e}_j\mathbf{e}_j^T\right) \boldsymbol{\beta}^{k-1}=C-\left(\mathbf{X}_j^T \mathbf{X}_j\right)\left(\boldsymbol{\beta}_j^{k-1}\right)^2.
	\end{equation}
	Therefore, we have \eqref{eliminate new} $\iff \mathop{\argmax}\limits_{j\in S_{k-1}}~ C-\left(\mathbf{X}_j^T \mathbf{X}_j\right)\left(\boldsymbol{\beta}_j^{k-1}\right)^2 \iff \mathop{\argmin}\limits_{j\in S_{k-1}}~\left(\mathbf{X}_j^T \mathbf{X}_j\right)\left(\boldsymbol{\beta}_j^{k-1}\right)^2$ when $\mathbf{X}_j \perp \mathbf{X}_{S_{k-1}\backslash \{j\}}, \forall j=1,\cdots, \vert S_{k-1} \vert$. Thus, when $\mathbf{X}_{S_{k-1}}$ is a column-orthogonal matrix, the new criterion \eqref{eliminate new} reduces to  minimizing the Wald-T statistics \eqref{wald T}.

\end{proof}

\section{Enhanced Algorithms}\label{123}
\subsection{Optimal Pursuit}
This subsection presents the algorithmic workflow of Optimal Pursuit (OP).
\begin{algorithm}[H] 
	\caption{OP: Optimal Pursuit}  \label{OP} 
	\begin{small} 
			\begin{algorithmic}[1]
					\STATE {\bfseries Input:}{ Design matrix $\mathbf{X}$, response vector $\mathbf{y}$, sparsity level $K$, and residual tolerance $\epsilon$.} 
					\STATE {\bfseries Output:}{ Support set $\hat{S}$ and sparse solution $\hat{\boldsymbol{\beta}}$.} 
					\STATE Initialize residual $\mathbf{r}^1 = \mathbf{y}$, support set ${S}_0 = \emptyset$, $\boldsymbol{\beta}^0 = \mathbf{0}$, $\mathbf{H}_0=\mathbf{0}_{n\times n}$, and iteration counter $k = 1$. 
					\REPEAT
					\STATE Compute the objective-based criterion \eqref{select new} using $\mathbf{r}^k, \mathbf{H}_{k-1}$ and identify the optimal index $j$.
					\COMMENT{\textcolor{gray}{\small Find the basis maximizing the objective reduction.}}
					\STATE Update support set: ${S}_{k} = {S}_{k-1} \cup \{j\}$.
					\STATE Update $\mathbf{C}_k = \left(\mathbf{X}_{S_k}^T \mathbf{X}_{S_k}\right)^{-1}$,
					and  $\mathbf{H}_k = \mathbf{X}_{S_k}\mathbf{C}_k\mathbf{X}_{S_k}^T$.
					\COMMENT{\textcolor{gray}{\small To be used in step $k+1$ as explained in  \cref{computation efficiency}.}}
					\STATE Solve least-squares problem using $\boldsymbol{\beta}_{S_k}^k = \mathbf{C}_k\mathbf{X}_{S_k}^T \mathbf{y}$. \COMMENT{\textcolor{gray}{\small Estimate coefficients on active set.}}
					\STATE Update $\boldsymbol{\beta}^k= \boldsymbol{\beta}_{S_k \cup S_k^c }^k$, where $\boldsymbol{\beta}_{S_k^c}^k = \mathbf{0}$.
					\STATE Update residual: $\mathbf{r}^{k+1} = \mathbf{y} - \mathbf{X} {\boldsymbol{\beta}}^k$. 
					\STATE Update counter: $k= k + 1$. 
					\UNTIL{$\|\mathbf{r}^k\|_2 \leq \epsilon$ or $k =K+1$}
					\STATE $\hat{S}=S_{k-1}$, and $\hat{\boldsymbol{\beta}}=\boldsymbol{\beta}^{k-1}$ .		
				\end{algorithmic}
		\end{small} 
\end{algorithm}

\subsection{Compressive Sampling Optimal Pursuit}
This subsection presents the algorithmic workflow of Compressive Sampling Optimal Pursuit (CoSaOP).

\begin{algorithm}[H] 
	\caption{CoSaOP: 	Compressive Sampling Optimal Pursuit}  \label{cosaop} 
	\begin{small} 
			\begin{algorithmic}[1]
				\STATE {\bfseries Input:} Design matrix $\mathbf{X}$, response vector $\mathbf{y}$, sparsity level $K$, residual tolerance $\epsilon_1$, variation tolerance $\epsilon_2$, and maximum iteration count \textit{Maxiter}.
				
					\STATE {\bfseries Output:}{ Support set $\hat{S}$ and sparse solution $\hat{\boldsymbol{\beta}}$.} 
					\STATE Initialize residual $\mathbf{r}^1= \mathbf{y}$, support set ${S}_0 = \emptyset$, $\boldsymbol{\beta}^0 = \mathbf{0}$, $\mathbf{H}_0=\mathbf{0}_{n\times n}$, and iteration counter $k = 0$. 
					\REPEAT
					\STATE Update counter: $k= k + 1$. 
					\STATE  Compute the objective-based criterion \eqref{select new} using $\mathbf{r}^k, \mathbf{H}_{k-1}$ and identify the optimal $2K$ indexes $\Omega$.
					\STATE Update support set: ${U}_{k} = {S}_{k-1} \cup \Omega$.
					\STATE Update $\mathbf{C}_{k}^1 = \left(\mathbf{X}_{U_k}^T \mathbf{X}_{U_k}\right)^{-1}$.
					\STATE Compute the objective-based criterion \eqref{eliminate new} using $\mathbf{C}_{k}^1$ and identify the minimum $K$ indexes $S_k$.
					\STATE Update $\mathbf{C}_{k} =  \left(\mathbf{X}_{S_k}^T \mathbf{X}_{S_k}\right)^{-1}$, and  $\mathbf{H}_k = \mathbf{X}_{S_k}\mathbf{C}_k\mathbf{X}_{S_k}^T$.
						\COMMENT{\textcolor{gray}{\small To be used in step $k+1$ as explained in  \cref{computation efficiency}.}}
				\STATE Solve least-squares problem using $\boldsymbol{\beta}_{S_k}^k = \mathbf{C}_k\mathbf{X}_{S_k}^T \mathbf{y}$. 
				\STATE Update $\boldsymbol{\beta}^k= \boldsymbol{\beta}_{S_k \cup S_k^c }^k$, where $\boldsymbol{\beta}_{S_k^c}^k = \mathbf{0}$.
				\STATE Update residual: $\mathbf{r}^{k+1} = \mathbf{y} - \mathbf{X} {\boldsymbol{\beta}}^k$. 
					\UNTIL{$\|\mathbf{r}^{k+1}\|_2 \leq \epsilon_1$ or $\vert|\boldsymbol{\beta}^k-\boldsymbol{\beta}^{k-1}\vert|\le \epsilon_2$ or $k=\textit{Maxiter}$}
					\STATE $\hat{S}=S_{k}$, and $\hat{\boldsymbol{\beta}}=\boldsymbol{\beta}^{k}$ .
					
				\end{algorithmic}
		\end{small} 
\end{algorithm}
\subsection{Optimal Pursuit-Best-Subset Selection}
This subsection presents the algorithmic workflow of Optimal Pursuit-Best-Subset Selection (OP-BESS) along with its sub-function, OP-Splicing.

	\begin{breakablealgorithm}
		\caption{OP-BESS (Main Function): Optimal Pursuit-Best-Subset Selection}
		\label{BESS.Fix}
		\begin{small}
				\begin{algorithmic}[1]
						\STATE {\bfseries Input:}{ Design matrix $\mathbf{X}$, response vector $\mathbf{y}$, a positive integer $k_{\text{max}}$, and a threshold $\tau$. }
						\STATE {\bfseries Output:}{ Support set $\hat{S}$, sparse solution $\hat{\boldsymbol{\beta}}$, and dual vector $\hat{\boldsymbol{d}}$.}
						\STATE Initialize $S^0 = \{ j : \sum_{i=1}^{p}\mathbf{I}(\vert\frac{\mathbf{x}_j^T \mathbf{y}}{\sqrt{\mathbf{x}_j^T \mathbf{x}_j}}\vert \le \vert\frac{\mathbf{x}_i^T \mathbf{y}}{\sqrt{\mathbf{x}_i^T \mathbf{x}_i}}\vert \le K \}$, $\mathcal{I}^0 = (S^0)^c$, $\boldsymbol{\beta}_{\mathcal{I}^0}^0 = \mathbf{0}$, and $\boldsymbol{d}_{S^0}^0 = \mathbf{0}$.
						\STATE Let $\mathbf{C}_0 =(\mathbf{X}_{S^0}^T \mathbf{X}_{S^0})^{-1} $.
						\COMMENT{\textcolor{gray}{\small To be used in the next step as explained in  \cref{computation efficiency}.}}
						\STATE Compute initial $\boldsymbol{\beta}_{S^0}^0 = \mathbf{C}_0\mathbf{X}_{S^0}^\top \mathbf{y}$, and $\boldsymbol{d}_{\mathcal{I}^0}^0 = \mathbf{X}_{\mathcal{I}^0}^\top (\mathbf{y} - \mathbf{X}\boldsymbol{\beta}^0)/n$.
						\REPEAT
						\STATE Update $(\boldsymbol{\beta}^{m+1}, \boldsymbol{d}^{m+1}, S^{m+1}, \mathcal{I}^{m+1}, \mathbf{C}_{m+1}) = \operatorname{Splicing}(\boldsymbol{\beta}^m, \boldsymbol{d}^m, S^m, \mathcal{I}^m, \mathbf{C}_{m}, k_{\text{max}}, \tau)$.
						\UNTIL{$(S^{m+1}, \mathcal{I}^{m+1}) = (S^m, \mathcal{I}^m)$}
						\STATE {\bfseries Return:} $(\hat{S}, \hat{\boldsymbol{\beta}}, \hat{\boldsymbol{d}}) = (S^{m+1}, \boldsymbol{\beta}^{m+1}, \boldsymbol{d}^{m+1})$.
					\end{algorithmic}
			\end{small}
	\end{breakablealgorithm}
	
	
	\begin{breakablealgorithm}
		\caption{OP-Splicing (Sub-Function of OP-BESS): 
				Update Model Parameters and Support Sets}
		\label{Splicing}
		\begin{small}
				\begin{algorithmic}[1]
						\STATE {\bfseries Input:}{ Current parameters obtained in step $m$:  $\boldsymbol{\beta}$, $\boldsymbol{d}$, support sets $S$, $\mathcal{I}$, $\mathbf{C}_{m}$,  maximum iterations $k_{\text{max}}$, and threshold $\tau$. }
						\STATE {\bfseries Output:}{ Updated parameters in iteration $m+1$: $\hat{\boldsymbol{\beta}}$, $\hat{\boldsymbol{d}}$, $\hat{S}$, $\hat{\mathcal{I}}$ and $\mathbf{C}_{m+1}$. }
						\STATE Initialize loss $L_0 = L = \|\mathbf{y} - \mathbf{X}\boldsymbol{\beta}\|_2^2 / (2n)$.
						\STATE Compute the objective-based criteria $\xi_j = \eqref{eliminate new}$ and  $\zeta_j =\eqref{select new}$ using $\mathbf{C}_m$ for $j = 1, \dots, p$.
						\FOR{$k = 1, 2, \dots, k_{\text{max}}$}
						\STATE Update support sets:
						\begin{align*}
								S_k &= \{ j \in S : \sum_{i\in S}\mathbf{I}(\xi_j \leq \xi_i) \leq k \}, \\
							 \mathcal{I}_k &= \{ j \in \mathcal{I} : \sum_{i\in \mathcal{I}}\mathbf{I}(\zeta_j \le \zeta_i) \leq k \}.
					         \end{align*}
				         \STATE Let
							$\tilde{S}_k = (S \setminus S_k) \cup \mathcal{I}_k, \quad \tilde{\mathcal{I}}_k = (\mathcal{I} \setminus \mathcal{I}_k) \cup S_k,$ and $\tilde{\mathbf{C}}= \left(\mathbf{X}^T_{\tilde{S}_k} \mathbf{X}_{\tilde{S}_k}\right)^{-1}$.
							\COMMENT{\textcolor{gray}{\small To be used in the next step as explained in  \cref{computation efficiency}.}}
						\STATE Solve least-squares problem for $\tilde{\boldsymbol{\beta}}_{\tilde{S}_k}$ and compute $\tilde{\boldsymbol{d}}$:
						\begin{align*}
								&\tilde{\boldsymbol{\beta}}_{\tilde{S}_k}=\tilde{\mathbf{C}} \mathbf{X}^T_{\tilde{S}_k} \mathbf{y}, ~~ \tilde{\boldsymbol{\beta}}_{\tilde{\mathcal{I}}_k}= \mathbf{0}.\\
								& \tilde{\boldsymbol{d}} = \mathbf{X}^T\left(\mathbf{y}-\mathbf{X}\tilde{\boldsymbol{\beta}}/n\right).
							\end{align*}
						\STATE Update loss: $\mathcal{L}_n(\tilde{\boldsymbol{\beta}}) = \|\mathbf{y} - \mathbf{X}\tilde{\boldsymbol{\beta}}\|_2^2 / (2n)$.
						\IF{$L > \mathcal{L}_n(\tilde{\boldsymbol{\beta}})$}
						\STATE Update $L = \mathcal{L}_n(\tilde{\boldsymbol{\beta}})$, $\hat{S} = \tilde{S}_k$, $\hat{\mathcal{I}} = \tilde{\mathcal{I}}_k$, $\hat{\boldsymbol{\beta}} = \tilde{\boldsymbol{\beta}}$, and $\mathbf{C}_{m+1}=\tilde{\mathbf{C}}$.
						\ENDIF
						\ENDFOR
						\IF{$L_0 - L < \tau$}
						\STATE Return current parameters without updates.
						\ENDIF
					\end{algorithmic}
			\end{small}
	\end{breakablealgorithm}

\section{Proof of \cref{lemma3_cosaop}}\label{App: lemma3_cosaop}

\begin{assumption*}\label{assumption} (Align with or is similar to those in \cite{tropp2007signal, NEEDELL2009301, zhu2020polynomial}.)\\
	1. $\boldsymbol{\beta}^*$ is $K$-sparse.\\
	2. $\mathbf{X}$ satisfies Restricted Isometry Property (RIP) as in \cite{NEEDELL2009301}. For definition of restricted isometry constant $\delta_{r}$, please refer to  \cite{NEEDELL2009301}.\\
	3. $\mathbf{X}$ is column-normalized, i.e., $\mathbf{X}_j^T\mathbf{X}_j=1$. \label{Assump1}
\end{assumption*}

\begin{proof}
	The elements selected in \(\Omega\) can be viewed as the 2\(K\) largest entries (in magnitude) of the vector \(\mathbf{q} = \mathbf{D}\mathbf{X}^T \mathbf{r}^k\), where \(\mathbf{D}\) is a diagonal matrix satisfying:  
	$$
	\mathbf{D}_{jj} =
	\begin{cases}
		\frac{1}{\sqrt{\mathbf{X}_j^T\left(\mathbf{I}-\mathbf{X}_{S_{k-1}}\left(\mathbf{X}_{S_{k-1}}^T \mathbf{X}_{S_{k-1}}\right)^{-1}\mathbf{X}_{S_{k-1}}^T\right)\mathbf{X}_j}}, ~~~~ &j\in S_{k-1}^c,\\
		~~~~~~~~~~~~~~~~~~~~~~~~~~~~~~~~~~~1, & j\in S_{k-1}.
	\end{cases}
	$$
	Notation: $\mathbf{X}_j^T\left(\mathbf{I}-\mathbf{X}_{S_{k-1}}\left(\mathbf{X}_{S_{k-1}}^T \mathbf{X}_{S_{k-1}}\right)^{-1}\mathbf{X}_{S_{k-1}}^T\right)\mathbf{X}_j= \mathbf{X}_j^T\left(\mathbf{I}-\mathbf{X}_{S_{k-1}}\left(\mathbf{X}_{S_{k-1}}^T \mathbf{X}_{S_{k-1}}\right)^{-1}\mathbf{X}_{S_{k-1}}^T\right)^2\mathbf{X}_j$. When $j \in S_{k-1}^c$, $\mathbf{X}_j^T\left(\mathbf{I}-\mathbf{X}_{S_{k-1}}\left(\mathbf{X}_{S_{k-1}}^T \mathbf{X}_{S_{k-1}}\right)^{-1}\mathbf{X}_{S_{k-1}}^T\right)^2\mathbf{X}_j>0$. Therefore, $\mathbf{D}$ is well-defined.
	
	Let $R = \text{supp} \left(\boldsymbol{\beta}^{k-1}-\boldsymbol{\beta}^*\right)$.
	According to CoSaOP algorithm, the set \( R \) contains at most \( 2K \) nonzero elements. By the definition of \( \Omega \), it holds that \( \left\|\mathbf{q} |_R\right\|_2 \le \left\|\mathbf{q} |_\Omega \right\|_2\). Then, we have
	\begin{equation*}
		\left\|\mathbf{q} |_{R \backslash \Omega}\right\|_2 \le \left\|\mathbf{q} |_{\Omega \backslash R} \right\|_2.
	\end{equation*}  
	
	By \cite{NEEDELL2009301} (Proposition 3.1 and 3.2), for \( j \in S_{k-1}^c \), it holds that
	\begin{align*}
		\mathbf{X}_j^T \mathbf{X}_{S_{k-1}}\left(\mathbf{X}_{S_{k-1}}^T \mathbf{X}_{S_{k-1}}\right)^{-1}\mathbf{X}_{S_{k-1}}^T \mathbf{X}_j &\le \left\|\mathbf{X}_{S_{k-1}}^T \mathbf{X}_j \right\|_2  \left\| \left(\mathbf{X}_{S_{k-1}}^T \mathbf{X}_{S_{k-1}}\right)^{-1}\mathbf{X}_{S_{k-1}}^T \mathbf{X}_j\right\|_2\\
		&\le \delta_{K+1} \cdot \frac{1}{\sqrt{1-\delta_K}}.
	\end{align*}  
	Therefore, 
	\begin{align*}
		1-\frac{\delta_{K+1}}{\sqrt{1-\delta_K}}&\le \mathbf{X}_j^T\left(\mathbf{I}-\mathbf{X}_{S_{k-1}}\left(\mathbf{X}_{S_{k-1}}^T \mathbf{X}_{S_{k-1}}\right)^{-1}\mathbf{X}_{S_{k-1}}^T\right)\mathbf{X}_j \le 1,
	\end{align*}
	i.e.,
	$$1\le \mathbf{D}_{jj}\le \frac{1}{\sqrt{1-\frac{\delta_{K+1}}{\sqrt{1-\delta_K}}}},$$
	which implies
	\begin{equation}
		\|\mathbf{D}\|_2 \le 	\frac{1}{\sqrt{1-\frac{\delta_{K+1}}{\sqrt{1-\delta_K}}}}. \label{D2}
	\end{equation}
	
	Let \(\mathbf{D}|_{\Omega \backslash R}\) denote the submatrix of \(\mathbf{D}\) restricted to the index set \(\Omega \backslash R\), with dimensions \(|\Omega \backslash R| \times |\Omega \backslash R|\). Therefore, by \cite{NEEDELL2009301} (Proposition 3.1 and Corollary 3.3) and \eqref{D2}, it holds that  
	\begin{align*}
		\left\|\mathbf{q}|_{\Omega \backslash R}\right\|_2 &= \left\|\mathbf{D}|_{\Omega \backslash R} \cdot \mathbf{X}_{\Omega \backslash R}^T \mathbf{r}^k\right\|_2\\
		& \le \left\|\mathbf{D}|_{\Omega \backslash R} \right\|_2 \left\| \mathbf{X}_{\Omega \backslash R}^T \mathbf{r}^k\right\|_2\\
		& \le \frac{1}{\sqrt{1-\frac{\delta_{K+1}}{\sqrt{1-\delta_K}}}} \left\| \mathbf{X}_{\Omega \backslash R}^T \left(\mathbf{X}\left(\boldsymbol{\beta}^*-\boldsymbol{\beta}^{k-1}\right)+\boldsymbol{\epsilon}\right)\right\|_2\\
		& \le \frac{1}{\sqrt{1-\frac{\delta_{K+1}}{\sqrt{1-\delta_K}}}}\left(\left\|\mathbf{X}_{\Omega \backslash R}^T \mathbf{X} \left(\boldsymbol{\beta}^*-\boldsymbol{\beta}^{k-1}\right) \right\|_2+ \left\|\mathbf{X}_{\Omega \backslash R}^T \boldsymbol{\epsilon}\right\|_2 \right)\\
		& \le \frac{1}{\sqrt{1-\frac{\delta_{K+1}}{\sqrt{1-\delta_K}}}}\left(\delta_{4K} \left\|\boldsymbol{\beta}^*-\boldsymbol{\beta}^{k-1}\right\|_2 +\sqrt{1+\delta_{2K}}\|\boldsymbol{\epsilon}\|_2\right).
	\end{align*}
	
	By the definition of the matrix \(\mathbf{D}\) and  \(\mathbf{D}_{jj} \geq 1\), for \(j \in S_{k-1}^c\), it follows that for any \(\mathbf{u} \in \mathbb{R}^n\),  
	\begin{equation}
		\|\mathbf{D}\mathbf{u}\|_2^2 = \sum_{j=1}^{n} \mathbf{D}_{jj}^2 \mathbf{u}_j^2\ge \sum_{j=1}^{n}\mathbf{u}_j^2 = \|\mathbf{u}\|_2^2.\label{u}
	\end{equation}
	Similarly, according to \cite{NEEDELL2009301} (Proposition 3.1 and Corollary 3.3) and \eqref{u},
	\begin{align*}
		\left\|\mathbf{q} |_{R \backslash \Omega}\right\|_2 &= 	\left\|\mathbf{D}|_{R \backslash \Omega} \cdot \mathbf{X}_{R \backslash \Omega}^T \mathbf{r}^k\right\|_2\\
		&\ge  \left\| \mathbf{X}_{R \backslash \Omega}^T \mathbf{r}^k\right\|_2\\
		&= \left\| \mathbf{X}_{R \backslash \Omega}^T \left(\mathbf{X}\left(\boldsymbol{\beta}^*-\boldsymbol{\beta}^{k-1}\right)+\boldsymbol{\epsilon}\right) \right\|_2\\
		& \ge \left\| \mathbf{X}_{R \backslash \Omega}^T \mathbf{X}\left(\boldsymbol{\beta}^*-\boldsymbol{\beta}^{k-1}\right)\left|_{R \backslash \Omega} \right.\right\|_2- \left\| \mathbf{X}_{R \backslash \Omega}^T \mathbf{X}\left(\boldsymbol{\beta}^*-\boldsymbol{\beta}^{k-1}\right)\left|_{ \Omega} \right.\right\|_2 - \|\mathbf{X}_{R \backslash \Omega}^T \boldsymbol{\epsilon}\| \\
		& \ge \left(1-\delta_{2K}\right) \left\|  \left(\boldsymbol{\beta}^*-\boldsymbol{\beta}^{k-1}\right)\left|_{R \backslash \Omega} \right.\right\|_2 - \delta_{2K} \left\|  \boldsymbol{\beta}^*-\boldsymbol{\beta}^{k-1}\right\|_2-\sqrt{1+\delta_{2K}}\|\boldsymbol{\epsilon}\|_2.
	\end{align*}
	Based on the definition of $R$, it holds that $\left(\boldsymbol{\beta}^*-\boldsymbol{\beta}^{k-1}\right)\left|_{R \backslash \Omega} \right. = \left(\boldsymbol{\beta}^*-\boldsymbol{\beta}^{k-1}\right)\left|_{\Omega^c} \right.$. Therefore,  we have
	\begin{align*}
		&\left(1-\delta_{2K}\right) \left\|  \left(\boldsymbol{\beta}^*-\boldsymbol{\beta}^{k-1}\right)\left|_{\Omega^c} \right.\right\|_2 - \delta_{2K} \left\|  \boldsymbol{\beta}^*-\boldsymbol{\beta}^{k-1}\right\|_2-\sqrt{1+\delta_{2K}}\|\boldsymbol{\epsilon}\|_2\\
		\le & \frac{1}{\sqrt{1-\frac{\delta_{K+1}}{\sqrt{1-\delta_K}}}}\left(\delta_{4K} \left\|  \boldsymbol{\beta}^*-\boldsymbol{\beta}^{k-1}\right\|_2 + \sqrt{1+\delta_{2K}}\|\boldsymbol{\epsilon}\|_2\right).
	\end{align*}
	Then, it follows:
	\begin{equation*}
		\left\|  \left(\boldsymbol{\beta}^*-\boldsymbol{\beta}^{k-1}\right)\left|_{\Omega^c} \right.\right\|_2 \le \frac{\left(\frac{\delta_{4K}}{\sqrt{1-\frac{\delta_{K+1}}{\sqrt{1-\delta_K}}}}+\delta_{2K} \right) \left\|  \boldsymbol{\beta}^*-\boldsymbol{\beta}^{k-1}\right\|_2+\left(\frac{\sqrt{1+\delta_{2K}}}{\sqrt{1-\frac{\delta_{K+1}}{\sqrt{1-\delta_K}}}}+\sqrt{1+\delta_{2K}} \right) \|\boldsymbol{\epsilon}\|_2}{1-\delta_{2K}}.
	\end{equation*}
	
	By assumption \( \delta_{K}\le  \delta_{K+1}\le \delta_{2K}\le \delta_{4K}\le 0.1\), \cref{lemma3_cosaop} is proved.  
	
\end{proof}

\section{Proof  of \cref{support merger}} \label{APP:support merger}

\begin{proof}
	The proof follows from Lemma 4.3 in \cite{NEEDELL2009301}. Since $\text{supp}(\boldsymbol{\beta}^{k-1})\subset U$, we find that 
	\begin{equation*}
		\left\|\boldsymbol{\beta}^*\big|_{U^c}\right\|_2 = \left\|\left(\boldsymbol{\beta}^* - \boldsymbol{\beta}^{k-1}\right)\big|_{U^c}\right\|_2 \leq \left\|\left(\boldsymbol{\beta}^* - \boldsymbol{\beta}^{k-1}\right)\big|_{\Omega^c}\right\|_2. 
	\end{equation*}
\end{proof}

\section{Proof of \cref{cosaop_estimation}}\label{APP:estimation}

\begin{proof}
	The proof follows from Lemma 4.4 in \cite{NEEDELL2009301}.
\end{proof}

\section{Proof of \cref{lemma_elimination}} \label{APP:lemma_elimination}
\begin{assumption*} 
	(Following the three assumptions in \cref{Assump1}.)\\
	Let $\tilde{\mathbf{a}}_{S_k}$ be defined as follows:  
	\[
	\tilde{\mathbf{a}}_{S_k,j} = 
	\begin{cases} 
		\mathbf{a}_j, & \text{if } j \in S_k, \\ 
		0, & \text{if } j \in S_k^c,
	\end{cases}
	\]  
	and let $\tilde{\mathbf{a}}_K$ denote the best $K$-sparse approximation of $\mathbf{a}$.  
	
	4. \text{supp}$\left(\boldsymbol{\beta}^*\right) \subseteq \left(S_k \,\triangle\, \text{supp}(\tilde{\mathbf{a}}_{K})\right)^c$, where $A~ \triangle ~B = (A \backslash B)\cup (B\backslash A)$.
\end{assumption*}

\begin{proof}
	\begin{align*}
		&\left\|\boldsymbol{\beta}^*- \tilde{\mathbf{a}}_{S_k}\right\|_2 \le 	\left\|\boldsymbol{\beta}^*- \tilde{\mathbf{a}}_{K}\right\|_2+	\left\|\tilde{\mathbf{a}}_{K}- \tilde{\mathbf{a}}_{S_k}\right\|_2,\\
		&	\left\|\boldsymbol{\beta}^*- \tilde{\mathbf{a}}_{K}\right\|_2 \le \left\|\boldsymbol{\beta}^*- {\mathbf{a}}\right\|_2+\left\|\mathbf{a}- \tilde{\mathbf{a}}_{K}\right\|_2 \le 2 	\left\|\boldsymbol{\beta}^*- {\mathbf{a}}\right\|_2, \\
		&\left\|\tilde{\mathbf{a}}_{K}- \tilde{\mathbf{a}}_{S_k}\right\|_2 = \left\|\tilde{\mathbf{a}}_{\text{supp}(\tilde{\mathbf{a}}_{K})\triangle S_k}\right\|_2 \le \delta \left\|\boldsymbol{\beta}^*- {\mathbf{a}}\right\|_2 ~~\text{(by Assumption 4)}.
	\end{align*}
	Since 
	$
	\left\|\boldsymbol{\beta}^*- {\mathbf{a}}\right\|_2 = \left\|\tilde{\mathbf{a}}_{\text{supp}(\tilde{\mathbf{a}}_{K}) \triangle S_k}\right\|_2 + \left\| (\boldsymbol{\beta}^*- {\mathbf{a}})\big|_{\left(\text{supp}(\tilde{\mathbf{a}}_{K}) \triangle S_k\right)^c} \right\|_2 \ge \left\|\tilde{\mathbf{a}}_{\text{supp}(\tilde{\mathbf{a}}_{K}) \triangle S_k}\right\|_2 
	$, 
	when \( \mathbf{X}_U \) is orthogonal, \( \text{supp}(\tilde{\mathbf{a}}_{K}) \triangle S_k = \emptyset, \delta =0 \). When the orthogonality of \( \mathbf{X}_U \) is weak, \( \text{supp}(\tilde{\mathbf{a}}_{K}) \cap S_k = \emptyset, \delta =1 \). 
	Therefore, \( \left\|\boldsymbol{\beta}^*- \tilde{\mathbf{a}}_{S_k}\right\|_2  \le (2+\delta) \left\|\boldsymbol{\beta}^*- {\mathbf{a}}\right\|_2\).
\end{proof}

\section{Proof of \cref{lemma_least_square}}
\label{APP:lemma_least}
\begin{proof}
	The properties of least squares imply that $\|\mathbf{y}-\mathbf{X}\boldsymbol{\beta}^k\|_2 \le \|\mathbf{y}-\mathbf{X}\tilde{\mathbf{a}}_{S_k}\|_2$. Therefore, it holds that
	\begin{align*}
		\left\|\mathbf{X}\left(\boldsymbol{\beta}^*-\boldsymbol{\beta}^k\right)+\boldsymbol{\epsilon}\right\|_2\le \left\|\mathbf{X}\left(\boldsymbol{\beta}^*-\tilde{\mathbf{a}}_{S_k}\right)+\boldsymbol{\epsilon}\right\|_2,
	\end{align*}
	which leads to
	\begin{align*}
		\left\|\mathbf{X}\left(\boldsymbol{\beta}^*-\boldsymbol{\beta}^k\right)\right\|_2 -\|\boldsymbol{\epsilon}\|_2 \le \left\|\mathbf{X}\left(\boldsymbol{\beta}^*-\tilde{\mathbf{a}}_{S_k}\right)\right\|_2 +\|\boldsymbol{\epsilon}\|_2,
	\end{align*}
	i.e.,
	\begin{align*}
		\left\|\mathbf{X}\left(\boldsymbol{\beta}^*-\boldsymbol{\beta}^k\right)\right\|_2  \le \left\|\mathbf{X}\left(\boldsymbol{\beta}^*-\tilde{\mathbf{a}}_{S_k}\right)\right\|_2 +2\|\boldsymbol{\epsilon}\|_2.
	\end{align*}
	
	Suppose $E = \text{supp}\left(\boldsymbol{\beta}^*\right)\cup S_k$, $|E|\le 2K$, then 
	\begin{align*}
		\left\|\mathbf{X}\left(\boldsymbol{\beta}^*-\boldsymbol{\beta}^k\right)\right\|_2^2 &= \left\|\mathbf{X}_E\left(\boldsymbol{\beta}^*-\boldsymbol{\beta}^k\right) \left|_E\right.\right\|_2^2\\
		&= \left(\boldsymbol{\beta}^*-\boldsymbol{\beta}^k\right)^T\left|_E\right. \mathbf{X}_E^T \mathbf{X}_E \left(\boldsymbol{\beta}^*-\boldsymbol{\beta}^k\right)\left|_E\right.\\
		& \ge \lambda_{\min}\left(\mathbf{X}_E^T \mathbf{X}_E\right) 	\left\|\boldsymbol{\beta}^*-\boldsymbol{\beta}^k\right\|_2^2.
	\end{align*}
	Similarly,  we have $$	\left\|\mathbf{X}\left(\boldsymbol{\beta}^*-\tilde{\mathbf{a}}_{S_k}\right)\right\|_2^2 \le \lambda_{\max}\left(\mathbf{X}_E^T \mathbf{X}_E\right) \left\|\boldsymbol{\beta}^*-\tilde{\mathbf{a}}_{S_k}\right\|_2^2.$$ Therefore, 
	\begin{align*}
		\left\|\boldsymbol{\beta}^* - \boldsymbol{\beta}^k \right\|_2 &\le \sqrt{\frac{\lambda_{\max}\left(\mathbf{X}_E^T \mathbf{X}_E\right)}{\lambda_{\min}\left(\mathbf{X}_E^T \mathbf{X}_E\right)}} \left\|\boldsymbol{\beta}^* - \tilde{\mathbf{a}}_{S_k}\right\|_2 + \frac{2}{\sqrt{\lambda_{\min}\left(\mathbf{X}_E^T \mathbf{X}_E\right)}} \|\boldsymbol{\epsilon}\|_2 \\
		&\le \sqrt{\frac{1 + \delta_{2K}}{1 - \delta_{2K}}} \left\|\boldsymbol{\beta}^* - \tilde{\mathbf{a}}_{S_k}\right\|_2 + \frac{2}{\sqrt{1 - \delta_{2K}}} \|\boldsymbol{\epsilon}\|_2 ~~~~~\text{(by Proposition 3.1 in \cite{NEEDELL2009301})}\\
		& \le 1.106 \left\|\boldsymbol{\beta}^* - \tilde{\mathbf{a}}_{S_k}\right\|_2 + 2.109 \|\boldsymbol{\epsilon}\|_2.
	\end{align*}
	
\end{proof}

\section{Proof of \cref{linear}} \label{APP:linear}
\begin{proof}
	Based on \cref{lemma3_cosaop}-\ref{lemma_least_square}, we can derive the linear convergence rate of CoSaOP as follows:
	\begin{align*}
		\left\|\boldsymbol{\beta}^* - \boldsymbol{\beta}^k \right\|_2 &\le 1.106 \left\|\boldsymbol{\beta}^* - \tilde{\mathbf{a}}_{S_k}\right\|_2 + 2.109 \|\boldsymbol{\epsilon}\|_2 ~~~~\text{(\cref{lemma_least_square})}\\
		& \le 1.106 (2+\delta)\left\|\boldsymbol{\beta}^* - {\mathbf{a}}\right\|_2 + 2.109 \|\boldsymbol{\epsilon}\|_2 ~~~~\text{(\cref{lemma_elimination})}\\
		& \le  1.106 (2+\delta)\left(1.112 \left\|\boldsymbol{\beta}^*|_{U^c}\right\|_2 + 1.06 \|\boldsymbol{\epsilon}\|_2\right) + 2.109 \|\boldsymbol{\epsilon}\|_2 ~~~~\text{(\cref{cosaop_estimation})}\\
		& \le 1.106 (2+\delta)\left(1.112 \left\|\left(\boldsymbol{\beta}^* - \boldsymbol{\beta}^{k-1}\right)\big|_{\Omega^c}\right\|_2 + 1.06 \|\boldsymbol{\epsilon}\|_2\right) + 2.109 \|\boldsymbol{\epsilon}\|_2 ~~~~\text{(\cref{support merger})}\\
		& = 1.23(2+\delta) \left\|\left(\boldsymbol{\beta}^* - \boldsymbol{\beta}^{k-1}\right)\big|_{\Omega^c}\right\|_2+\left(2.109+1.173(2+\delta)\|\boldsymbol{\epsilon}\|_2\right)\\
		& \le 1.23(2+\delta) \left(0.2353\left\|\boldsymbol{\beta}^* - \boldsymbol{\beta}^{k-1}\right\|_2+2.4 \|\boldsymbol{\epsilon}\|_2\right)+\left(2.109+1.173(2+\delta)\|\boldsymbol{\epsilon}\|_2\right) ~~~~\text{(\cref{lemma3_cosaop})}\\
		& \le 0.869 \left\|\boldsymbol{\beta}^* - \boldsymbol{\beta}^{k-1}\right\|_2+14.482 \|\boldsymbol{\epsilon}\|_2.
	\end{align*}
\end{proof}

\section{Proof of \cref{select}} \label{select prove}
\begin{proof}
		When $i\in S$, $\rho = \frac{|\mathbf{X}_i^T\mathbf{X}_j|}{\vert|\mathbf{X}_i\vert|_2\vert|\mathbf{X}_j\vert|_2}$, we have
	\begin{equation}
		\left\|\left(\mathbf{I}-\mathbf{X}_{S}\left(\mathbf{X}_{S}^T \mathbf{X}_{S}\right)^{-1}\mathbf{X}_{S}^T\right)\mathbf{X}_j\right\|_2\le \sqrt{1-\rho}\vert|\mathbf{X}_j\vert|_2.
	\end{equation}
	Hence, for classical correlation-based selection criterion \eqref{corr-based},
	\begin{align*}
		\frac{|{\mathbf{r}^k}^T \mathbf{X}_j|}{\vert|\mathbf{X}_j\vert|_2}&=\frac{{\mathbf{r}^k}^T \left(\mathbf{I}-\mathbf{X}_{S}\left(\mathbf{X}_{S}^T \mathbf{X}_{S}\right)^{-1}\mathbf{X}_{S}^T\right)\mathbf{X}_j }{\vert|\mathbf{X}_j \vert|_2}\\
		&\le \frac{\vert|\mathbf{r}^k \vert|_2 \vert|\mathbf{X}_j \vert|_2 \sqrt{1-\rho^2}}{\vert|\mathbf{X}_j \vert|_2}\\
		& = \sqrt{1-\rho^2}\vert|\mathbf{r}^k \vert|_2.
	\end{align*}
	
	While for proposed criterion,
	\begin{equation}
		\frac{\left({\mathbf{r}^k}^T \mathbf{X}_j\right)^2}{\mathbf{X}_j^T\left(\mathbf{I}-\mathbf{X}_{S}\left(\mathbf{X}_{S}^T \mathbf{X}_{S}\right)^{-1}\mathbf{X}_{S}^T\right)\mathbf{X}_j} = 	\frac{\left({\mathbf{r}^k}^T \mathbf{X}_j\right)^2}{\mathbf{X}_j^T\left(\mathbf{I}-\mathbf{X}_{S}\left(\mathbf{X}_{S}^T \mathbf{X}_{S}\right)^{-1}\mathbf{X}_{S}^T\right)^2\mathbf{X}_j}\ge \frac{1}{1-\rho^2}\left(\frac{{\mathbf{r}^k}^T \mathbf{X}_j}{\vert|\mathbf{X}_j\vert|_2}\right)^2.
	\end{equation}
\end{proof}

\section{Proof of \cref{eliminate}} \label{eliminate prove}
\begin{proof}
	The proof of Theorem \ref{eliminate} (1) and (2) will be given below.
	
	(1) Since
	\begin{align}
		objective\text{-}based~criterion~\eqref{eliminate new} &= \mathbf{y}^T \mathbf{X}_S\left(\mathbf{X}_S^T \mathbf{X}_S\right)^{-1}\mathbf{X}_S^T\mathbf{y} \notag\\
		& = -\vert|\mathbf{y} - \mathbf{X}_{S}\boldsymbol{\beta}\big|_{S}\vert|_2^2 + \vert|\mathbf{y}\vert|_2^2 \label{eq1}\\
		& \le \vert|\mathbf{y}\vert|_2^2 \notag,
	\end{align}
	the upper bound of the objective-based criterion \eqref{eliminate new} is $\vert|\mathbf{y}\vert|_2^2$.
	
	Now consider that
	\begin{equation}
		\mathbf{y} = \mathbf{X}\boldsymbol{\beta}^* + \boldsymbol{\epsilon} = \mathbf{X}_{S^*}\boldsymbol{\beta}^*\big|_{S^*} + \boldsymbol{\epsilon}.
	\end{equation} 
	
	If $S^*\subset S$, then for all $j_m\in S\backslash S^*$, $S^*\subset S\backslash \{j_m\}$. By \eqref{eq1}, 
	\begin{align*}
		objective\text{-}based~criterion~\eqref{eliminate new}~for~j_m &= \mathbf{y}^T \mathbf{X}_{S\backslash \{j_m\}}\left(\mathbf{X}_{S\backslash \{j_m\}}^T \mathbf{X}_{S\backslash \{j_m\}}\right)^{-1}\mathbf{X}_{S\backslash \{j_m\}}^T\mathbf{y} \\
		& = -\vert|\mathbf{y} - \mathbf{X}_{S\backslash \{j_m\}}\boldsymbol{\beta}\big|_{S\backslash \{j_m\}}\vert|_2^2 + \vert|\mathbf{y}\vert|_2^2 \\
		& \geq -\vert|\mathbf{y} - \mathbf{X}_{S^*}\boldsymbol{\beta}^*\big|_{S^*}\vert|_2^2 +  \vert|\mathbf{y}\vert|_2^2 \\
		& = \vert|\mathbf{y}\vert|_2^2 - \vert|\boldsymbol{\epsilon}\vert|_2^2.
	\end{align*}
	
	Hence, for all $j_m\in S\backslash S^*$, the lower bound of the objective-based criterion \eqref{eliminate new} for $j_m$ is $\vert|\mathbf{y}\vert|_2^2 - \vert|\boldsymbol{\epsilon}\vert|_2^2$.
	
	In noiseless scenario, for all $j_m\in S\backslash S^*$, the objective-based criterion \eqref{eliminate new} for $j_m$ achieves the upper bound. Hence, 
	\begin{equation*}
		j_m \in \argmax_{j \in S}~objective\text{-}based~criterion~\eqref{eliminate new}.
	\end{equation*}
	
	(2) Consider a simple counterexample where the T-statistic fails in the presence of highly correlated features.
	
	Let
	\begin{equation*}   
		\mathbf{X} =
		\begin{pmatrix}
			0.2     & 0 & 0    \\
			0 &  0.8    & 0.9   \\
			0     & 0.1 & 0.1   
		\end{pmatrix}.
	\end{equation*}
	and $\mathbf{y} = \left(0.2, 0.85, 0.1\right)^T$. Consider the best subset selection with $K=2$. The ground truth is as follows:
	\begin{table*}[ht]
		\centering
		\caption{A simple counterexample.}
		\begin{tabular}{@{}l@{\hspace{20pt}}c@{\hspace{20pt}}c@{\hspace{20pt}}c@{}}
				\toprule
				\textbf{Subset} & \textbf{Feature 1 \& 2} & \textbf{Feature 2 \& 3} & \textbf{Feature 1 \& 3} \\ \midrule
				Residual Norm           & 0.0062          & 0.2               & 0.0055           \\
				Best Subset              & No          & No           & Yes            \\ \bottomrule
		\end{tabular}
		\label{comparison}
	\end{table*}
	
	Hence, when $K=2$, feature $\mathbf{X}_1$ and $\mathbf{X}_3$ are the true features, and feature $\mathbf{X}_2$ is the spurious feature. 
	
	However, given $S = \{1,2,3\}$, the classic criterion \eqref{wald T} ($\sqrt{\mathbf{X}_i^T\mathbf{X}_i}\left|\boldsymbol{\beta}_i\right|$) for the features are: $T_1 = 0.2, T_2 = 0.403, T_3 = 0.453$.  By minimizing classic criterion \eqref{wald T}, feature $\mathbf{X}_1$ is eliminated.
	
	The core issue lies in the fact that the spurious feature $\mathbf{X}_2$ is highly correlated with the important feature $\mathbf{X}_3$ in the true subset. Since classic criterion \eqref{wald T} considers only the individual significance of features while ignoring their interactions, the true feature $\mathbf{X}_1$ is erroneously discarded by the classic criterion \eqref{wald T}.
	
	However, for objective-based criterion \eqref{eliminate new}, it will always identify $\mathbf{X}_p$ by Theorem \ref{4.2} in the paper. The proof is complete.
\end{proof}

\section{Explanations on \cref{select} and \cref{eliminate}}\label{explanations}
\textbf{Explanations on \cref{select}:}

Theorem \ref{select} demonstrates that if features $i$ and $j$ in the true subset are correlated (violating the RIP condition), inequality \eqref{C1} implies that as their correlation $\rho$ approaches 1, feature $j$ becomes increasingly unidentifiable under traditional correlation-based criterion \eqref{corr-based} once feature $i$ has been selected. In contrast, \eqref{C2} shows that when $\rho$ is large, since the first term’s coefficient in RHS $1/(1-\rho^2)$ is large, the proposed criterion \eqref{select new} exists a stable lower bound. \textbf{The larger the correlation $\rho$, the stronger discrepancy between traditional correlation-based criterion \eqref{corr-based} and proposed criterion \eqref{select new}.} This significantly mitigates the impact of feature correlations on identifying $j$, making true features more reliably detectable.  

\textbf{Explanations on \cref{eliminate}:}

(1) The upper bound of the objective-based criterion \eqref{eliminate new} is $\vert|\mathbf{y}\vert|_2^2$. Theorem \ref{eliminate} (1) shows that when the current subset $S$ contains the true subset $S^*$, all features outside the true subset have a stable lower bound for their criterion \eqref{eliminate new}, which is related to noise level. When the noise is relatively weak compared to the signal, this lower bound becomes larger, indicating that features outside the true subset are more easily identified by maximizing criterion \eqref{eliminate new} (ideally, larger values of criterion \eqref{eliminate new} for irrelevant features are preferred). In the noiseless case, all features outside the true subset can be identified by maximizing criterion \eqref{eliminate new}. This result does not rely on any assumptions about feature correlations, and therefore remains valid even in scenarios with highly correlated features.

In \cref{bar_all2}, all enhanced algorithms that incorporate the feature elimination criterion \eqref{eliminate new} show a clear improvement in performance as the SNR increases. This is also consistent with the theoretical results.

(2) In the best subset selection problem, a pseudo-correlation phenomenon may arise. Pseudo-correlated features refer to those features $\mathbf{X}_p$ that are highly correlated with critical features $\mathbf{X}_i$ in the true subset $S^*$, yet $\mathbf{X}_p$ itself does not belong to $S^*$. Once such features are selected, they are difficult to detect and remove by classical T-statistics based feature elimination criterion \eqref{wald T} but may be reliably identified by the proposed criterion \eqref{eliminate new}. See \cref{eliminate prove} for example.

Theorem \ref{select} and \ref{eliminate} demonstrate that our proposed criterion \eqref{select new} effectively identifies truly important features even when they are strongly correlated, and criterion \eqref{eliminate new} reliably removes pseudo-correlated features that mimic true features. These theoretical insights are further validated by the experimental results in \cref{high corr}, which confirm the superior performance of our criteria in high-correlation scenarios.

\section{Experiment on Computational Time}\label{time}
In this section, we report the computational time of each algorithm over 500 independent runs. While the new criteria introduce additional multiplication operations, resulting in slightly higher computational time compare to the original criteria \footnote{In \cref{fig:time},  CoSaMP requires more time than CoSaOP, since in this sparse recovery scenario, although CoSaOP performs additional matrix multiplications in a single iteration, it generally converges to the stopping criterion within just a few iterations. In contrast, CoSaMP requires significantly more iterations to converge, often reaching the maximum number of iterations before stopping, which results in a longer runtime.}, it remains within the same order of magnitude, as shown in \cref{fig:time}.

\begin{figure}[ht]
	\begin{center}
		\centerline{\includegraphics[width=0.5\columnwidth]{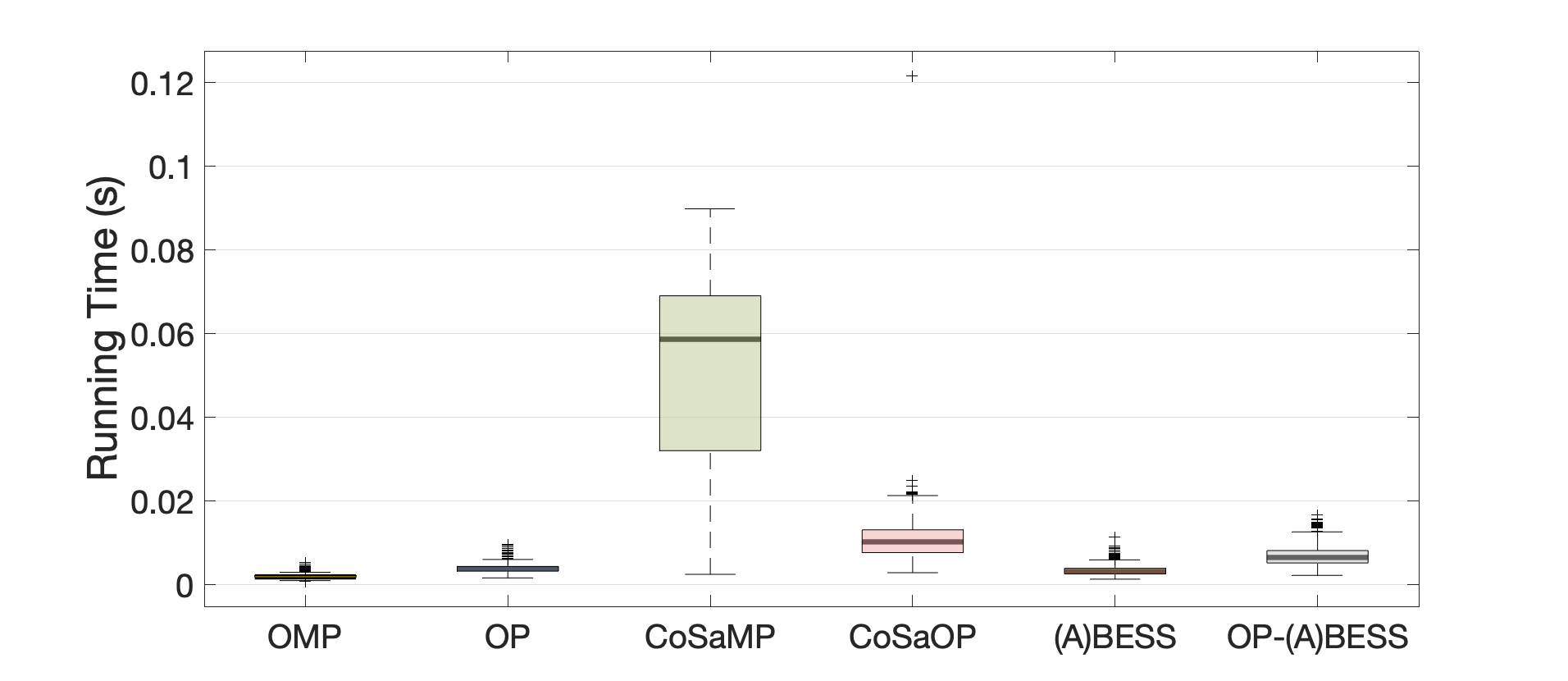}}
		\caption{
			Running time for each algorithm over 500 independent runs.}
		\label{fig:time}
	\end{center}
	\vskip -0.3in
\end{figure}

\section{Experiment with High-dimensional Correlated Features and Small Noisy Samples}\label{high corr}
In this experiment, we conduct additional comparisons of the algorithm under extreme scenarios:

(a) \textbf{Small-sample rate and high-dimensional vectors:} $p = 2000$, with $n/p$ varying from 0.05 to 0.1.

(b) \textbf{High noise:} SNR\footnote{$\text{SNR} = 20\log_{10} \vert|\mathbf{X}\boldsymbol{\beta}\vert|/\vert|\boldsymbol{\epsilon}\vert|$.} varies from 5 to 15.

(c) \textbf{Highly correlated features (RIP violated):} The covariance matrix of the row vectors of $\mathbf{X}$ follows a Toeplitz structure, where the correlation between position \( i,j \) is $corr_{ij} = \rho^{|i-j|}$, with $\rho = 0.7$.

We generated sparse vectors with a sparsity level $K = 10$ for testing. Specifically, the sparse signal is block-sparse, comprising two blocks of five adjacent non-zero entries each. Combined with the Toeplitz covariance structure (where features closer in position exhibit higher correlations), this configuration ensures:

(1) \textbf{High correlation features within the true subset} (as stated in Theorem \ref{select}).

(2) \textbf{Many pseudo-features outside the true subset highly correlated with those in the true subset} (as in Theorem \ref{eliminate}).

Phase transition diagrams illustrate the combined impact of varying sampling rates and SNR on algorithm performance (using NMSE as evaluation metric), where larger blue areas indicate stronger algorithm performance. The experimental results show that all algorithms enhanced with the new criteria exhibit significant improvements in phase transition capabilities compared to their original versions. This not only demonstrates the robust advantages of the new criteria in high-dimensional, low-SNR extreme scenarios but also validates the theories established in Theorem \ref{select} and \ref{eliminate} through their marked improvements under high feature correlations.

\begin{figure}[ht]
	\vskip -0.06in
	\begin{center}
			\centerline{\includegraphics[width=0.7\columnwidth]{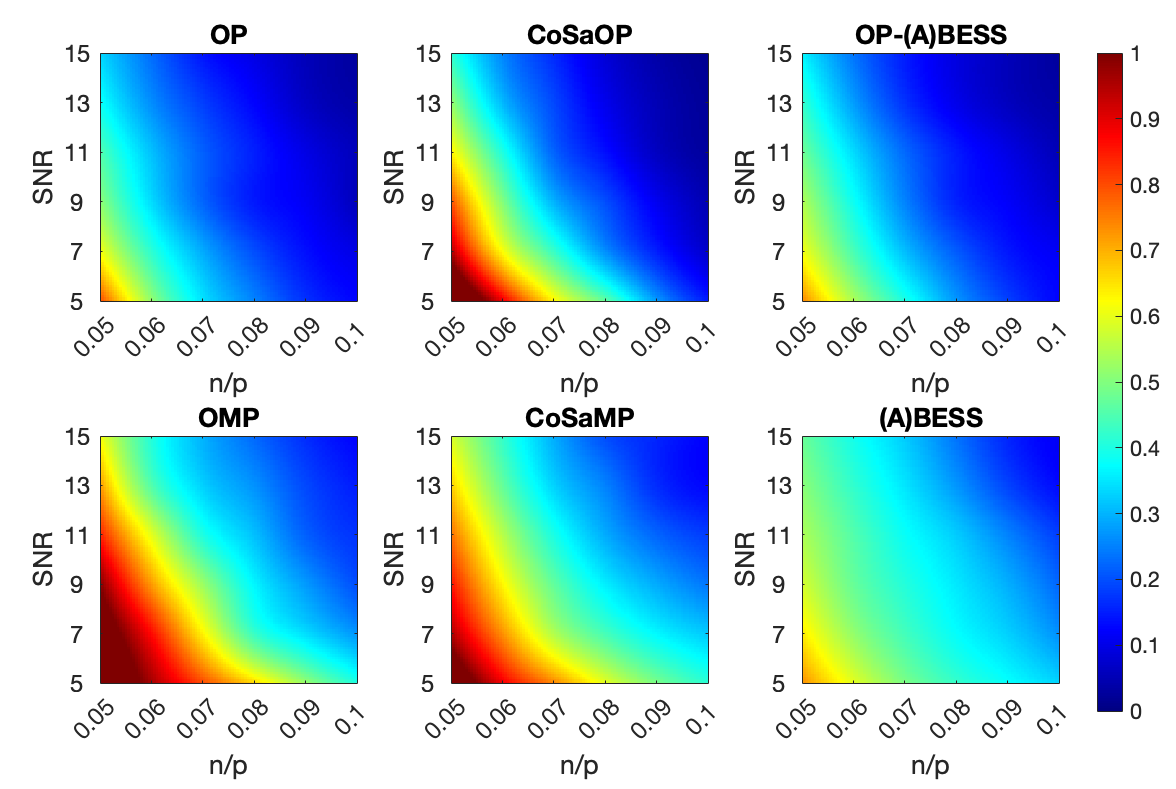}}
			\vspace{-3mm}
			\caption{Phase transition with correlated features.}
		\end{center}
	\vskip -0.35in
\end{figure}

\section{Reasons Why CoSaMP Fails in Sparse Regression}\label{reason}
The main algorithmic flow of CoSaMP is as follows: it iteratively (1) selects $2K$ features, (2) solves a least squares problem on a large subset, and (3) prunes to $K$ coefficients. However, high feature correlation can cause significant errors in the final estimate from steps (2) and (3). We visualize the impact of feature correlation on CoSaMP's iterative process here.

\begin{figure}[H]
	
	\centering
	\begin{subfigure}[b]{0.48\linewidth}
		\centering
		\includegraphics[width=0.95\linewidth]{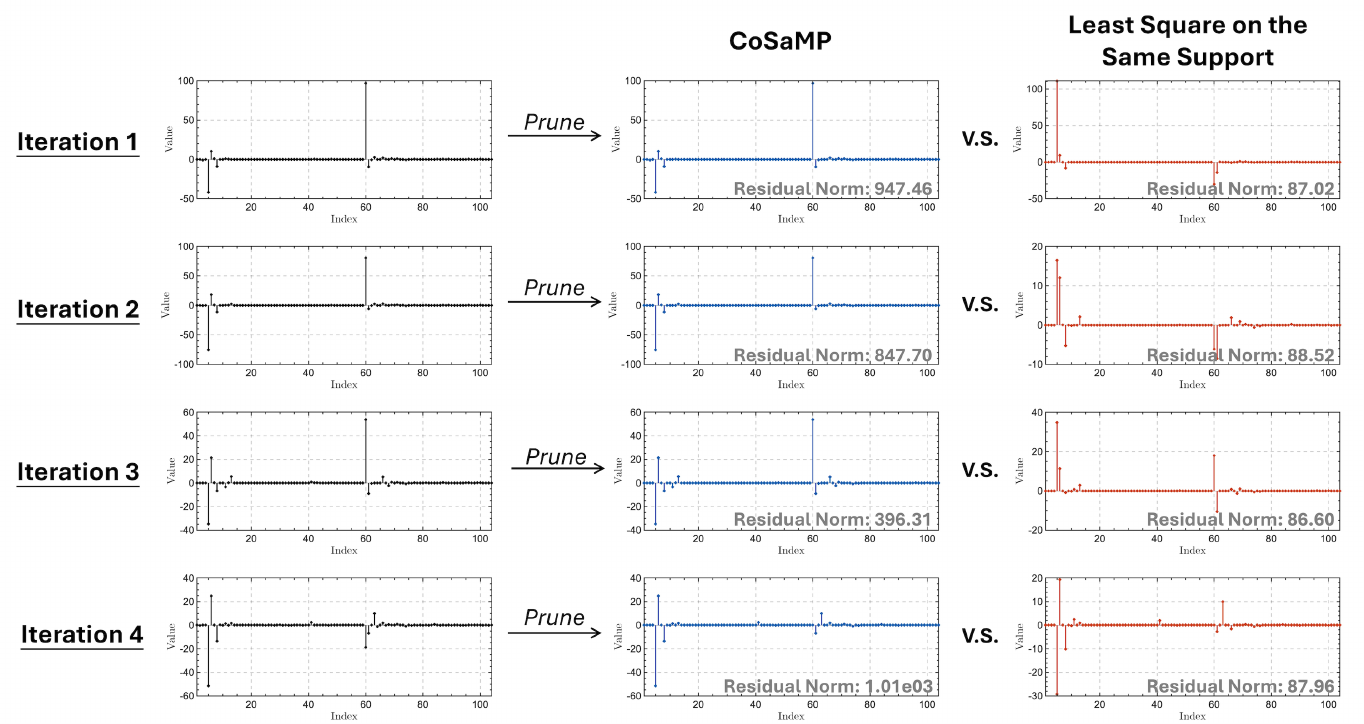}
		\caption{CoSaMP fails when features are highly correlated}
		\label{fig:M1a}
	\end{subfigure}
	\begin{subfigure}[b]{0.48\linewidth}
		\centering
		\includegraphics[width=0.95\linewidth]{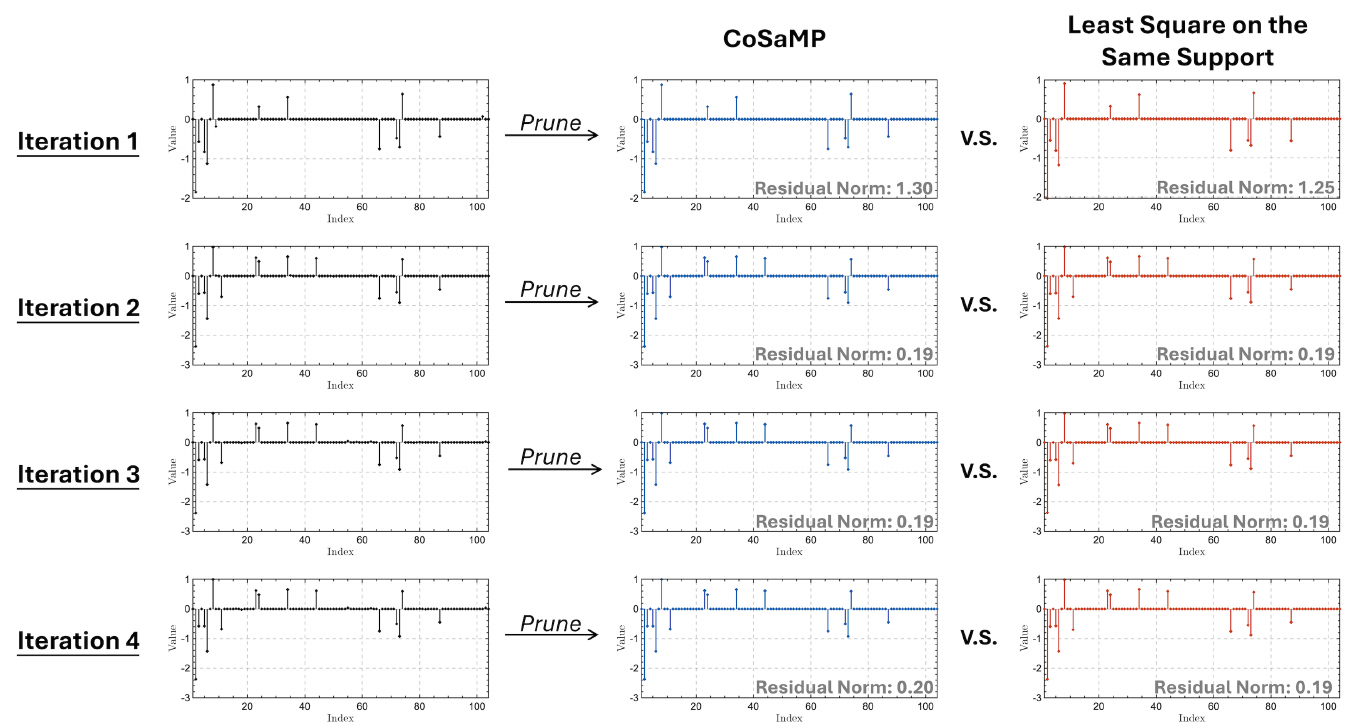}
		\caption{CoSaMP succeeds when features are weakly correlated}
		\label{fig:M1b}
	\end{subfigure}
	\caption{Reasons why CoSaMP fails in sparse regression.}
	
\end{figure}

As shown in \cref{fig:M1a}, on a regression dataset (Boston Housing) with highly correlated features, the pruned support set’s direct coefficients (column 2) differ significantly from those after least squares estimation (column 3), with substantial residuals. CoSaMP, lacking least squares refinement, fails as the residuals grow with each iteration due to high feature correlation. This is evident in the residual curve evolution in \cref{fig:M2a}.

\begin{figure}[H]
	
	\centering
	\begin{subfigure}[b]{0.48\linewidth}
		\centering
		\includegraphics[width=0.95\linewidth]{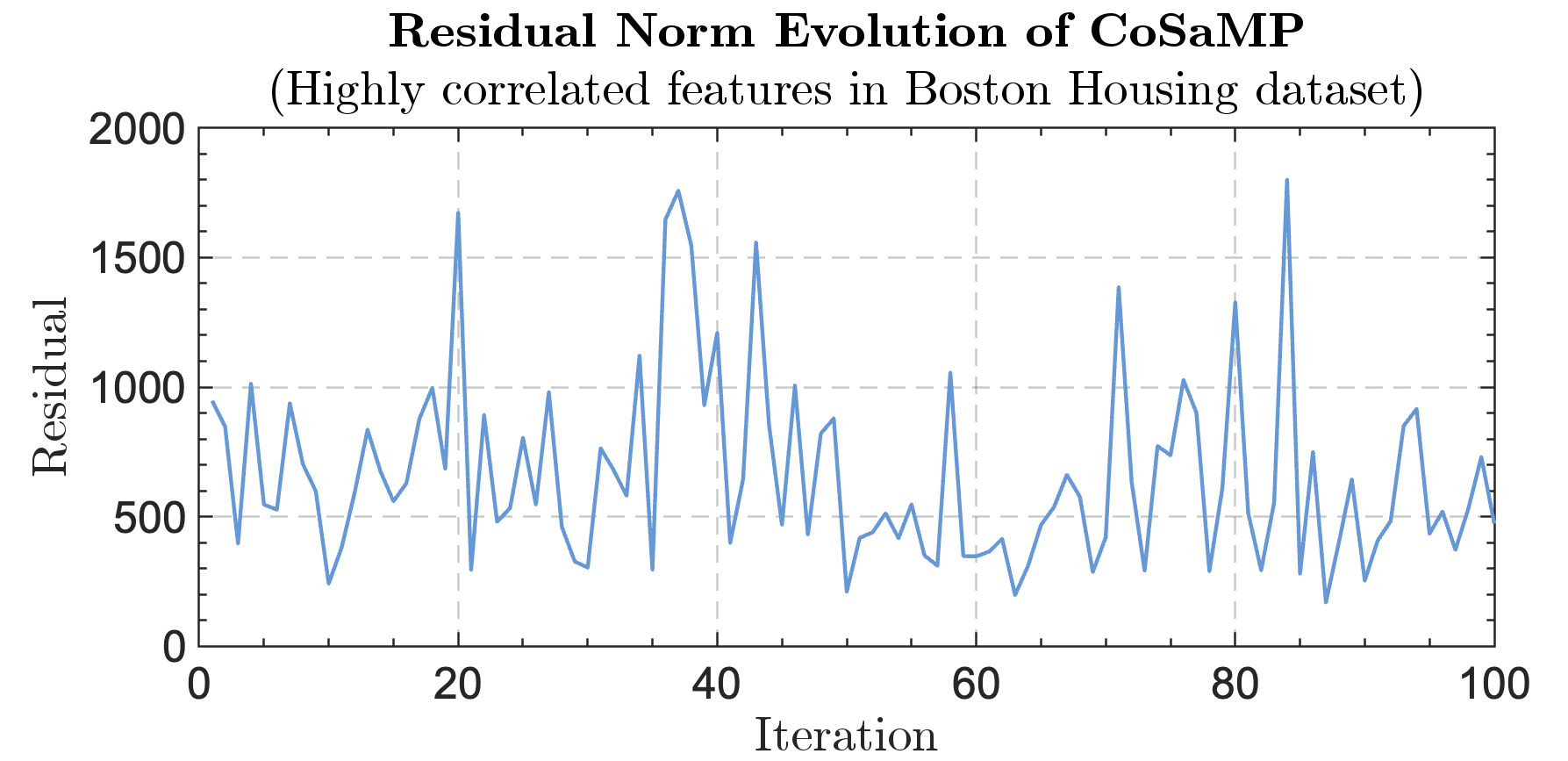}
		\caption{Residual evolution of CoSaMP for highly-correlated features}
		\label{fig:M2a}
	\end{subfigure}
	\begin{subfigure}[b]{0.48\linewidth}
		\centering
		\includegraphics[width=0.95\linewidth]{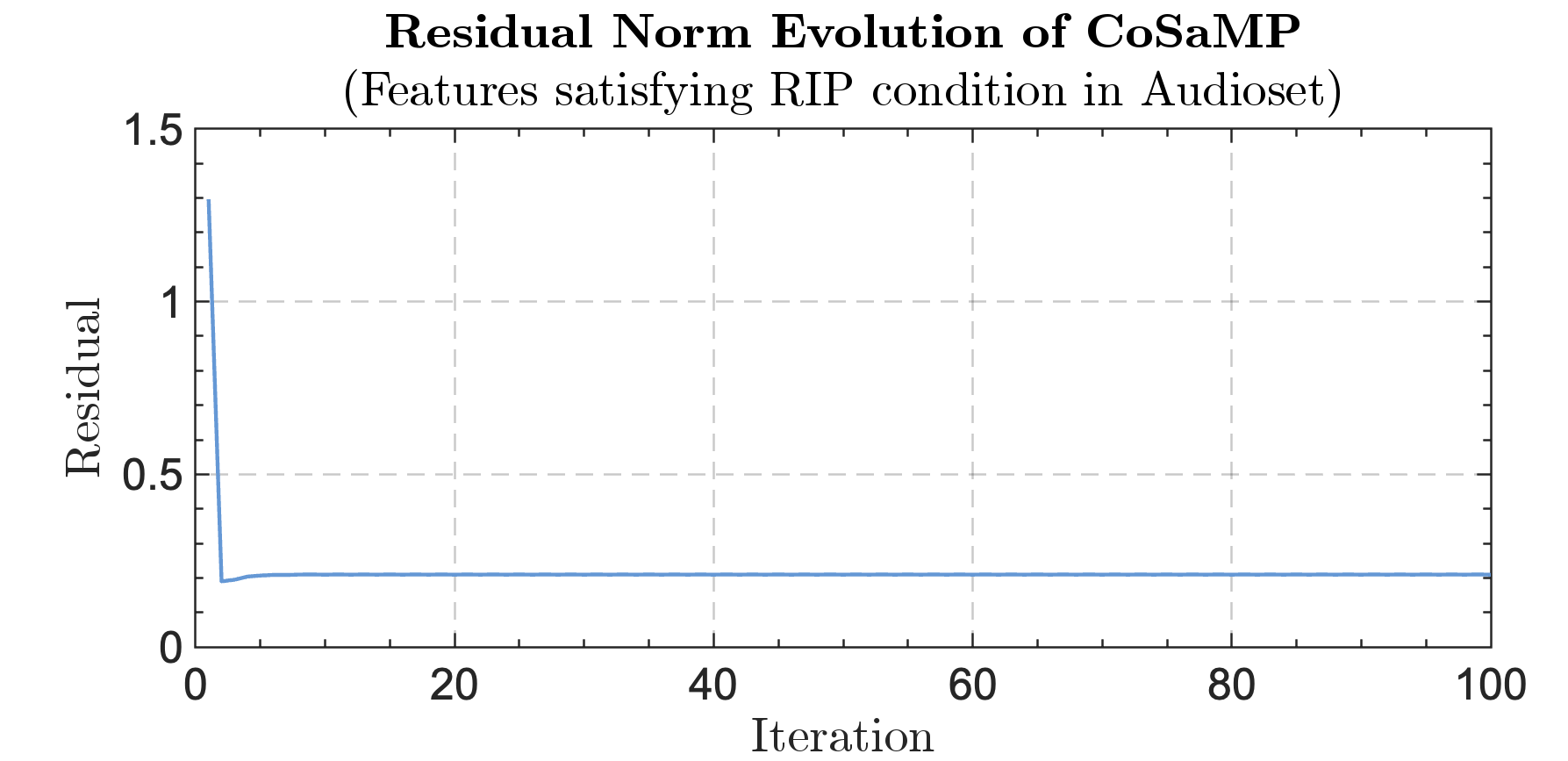}
		\caption{Residual evolution of CoSaMP for weakly-correlated features}
		\label{fig:M2b}
	\end{subfigure}
	\caption{Residual evolution of CoSaMP with different feature correlation cases.}
	
\end{figure}

In contrast, when features are weakly correlated (as shown in \cref{fig:M1b}), the coefficients and residuals after pruning the large support set (column 2) and performing least squares (column 3) are nearly identical, leading to algorithm convergence, as shown in \cref{fig:M2b}.

Therefore, CoSaMP is less suitable for scenarios where features are highly correlated or when $p$ is close to $3K$. The theoretical properties of this algorithm, as established in \cite{NEEDELL2009301}, are also based on the assumption of weak feature correlation.

However, CoSaMP performs well in compressed sensing scenarios, particularly when using NMSE as the evaluation metric for audio data, where it can almost achieve the best results among classical algorithms.

%


\section{Cross-Validation Performance in Prediction}\label{cv1}
We evaluated the best subset selection (BSS) algorithm’s predictive performance on the six datasets using 5-fold cross-validation, where 4 folds were for training and 1 for validation. The prediction error is defined as:
\begin{equation}
	\text{error}_{\text{pred}} = \frac{1}{n} \sum_{i=1}^{n} (y_i - \hat{y}_i)^2.
\end{equation}
The cross-validation score, averaged across the 5 folds, is shown in \cref{cv}. The enhanced algorithms demonstrate superior generalization and highlighting the advantage of the new criteria in selecting key features for predictive tasks.
\begin{figure*}[ht]
	\begin{center}
		\centerline{\includegraphics[width=1\columnwidth]{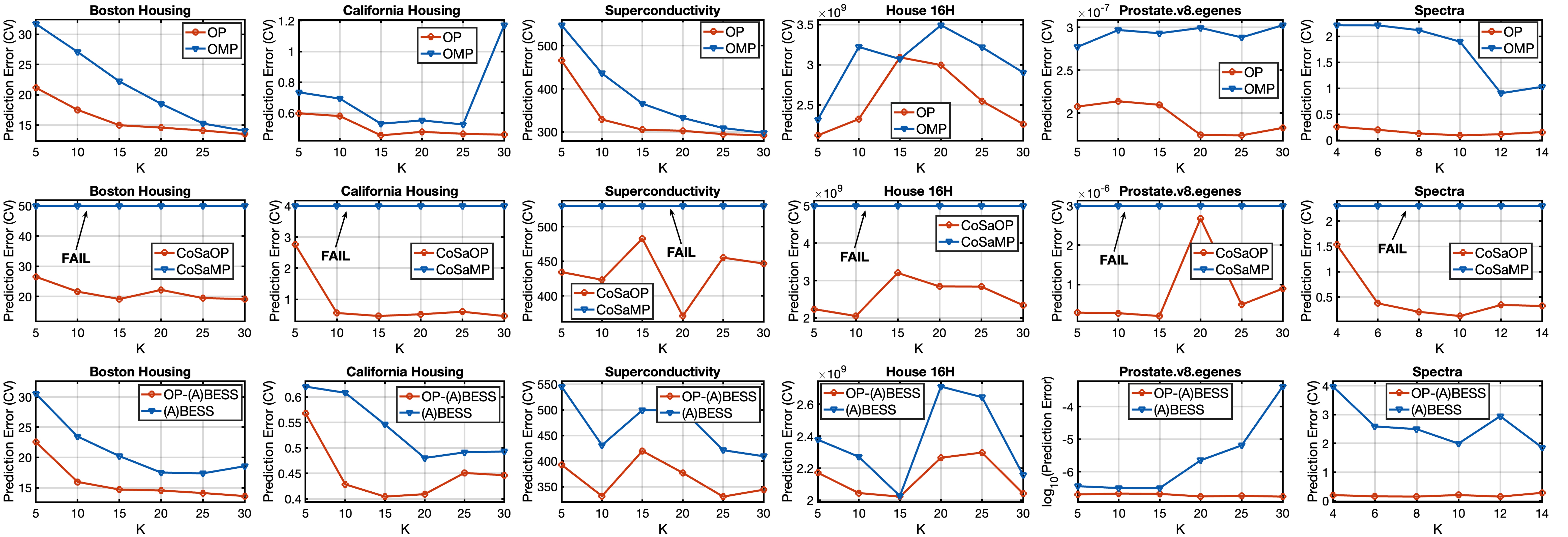}}
		\vspace{-2mm}
		\caption{
			Rows 1–3 present the meta-gains in cross-validation performance in prediction (\(\text{error}_{\text{pred}}\), smaller is better) for the Boston Housing, California Housing, Superconductivity datasets, House 16H, Prostate.v8.egenes, and Spectra datasets, respectively, across three algorithms as the number of selected features \(K\) varies.}
		\label{cv}
	\end{center}
	\vskip -0.3in
\end{figure*}

\section{The Optimal Gradient Pursuit Criteria}\label{OGP2}
\textbf{The Optimal Gradient Pursuit Criteria:}
\begin{equation}\label{OGP}
	j^* = \argmax_{j \in S^c} \frac{\uwave{\vert|\mathbf{X}_{S_{k-1}}^T\mathbf{r}^{k}\vert|_2^2} + \left({\mathbf{r}^k}^T\mathbf{X}_j\right)^2}{\vert|\uwave{\mathbf{X}_{S_{k-1}}\mathbf{X}_{S_{k-1}}^T\mathbf{r}^k} + \mathbf{X}_j\mathbf{X}_j^T\mathbf{r}^k\vert|_2},
\end{equation}
where \textbf{the underlined part only needs to be computed once} and its overall computational complexity is at the same magnitude as the correlation-based selection criterion in Gradient Pursuit. The idea of optimal gradient pursuit can be illustrated in \cref{OGP plot}.

\begin{figure*}[ht]
	\begin{center}
		\centerline{\includegraphics[width=0.7\columnwidth]{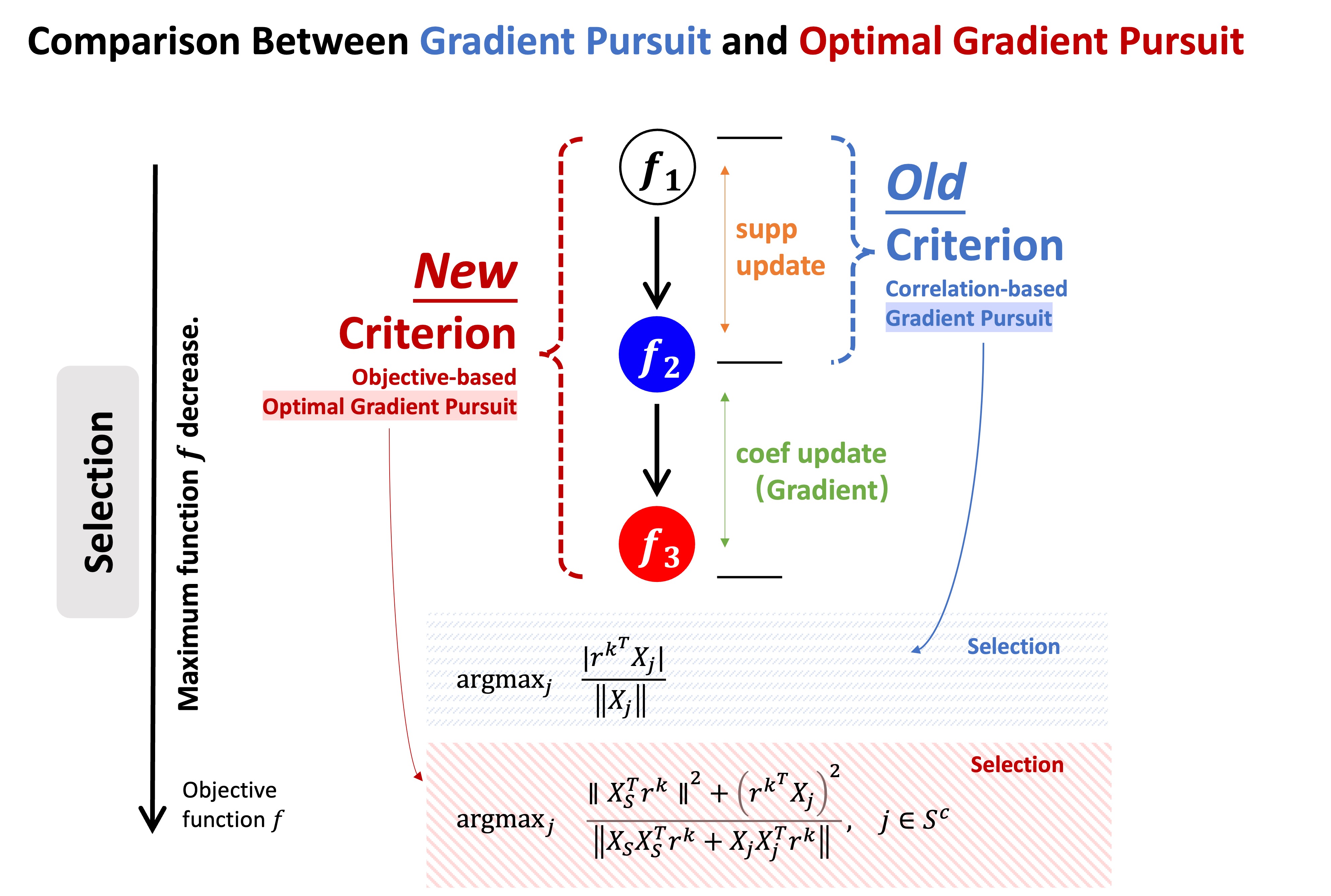}}
		\vspace{-2mm}
		\caption{
			The idea of Optimal Gradient Pursuit.}
		\label{OGP plot}
	\end{center}
	\vskip -0.3in
\end{figure*}

\begin{remark}
	The criterion \eqref{OGP} is the OGP version of the OP feature selection criterion \eqref{select new}. Similarly, by applying techniques analogous to those in the Appendix \ref{OGP3}, we can derive the OGP version of the feature elimination criterion \eqref{eliminate new}, which we omit here for brevity.
\end{remark}

\begin{remark}
	The criterion \eqref{OGP} does not repeatedly select features that have already been chosen. In practice, if adding a new feature at every iteration is not desired, one may consider using
	
	\begin{equation}\label{OGP1}
		j^* = \argmax_{j} \begin{cases}
			\frac{\uwave{\vert|\mathbf{X}_{S_{k-1}}^T\mathbf{r}^{k}\vert|_2^2} + \left({\mathbf{r}^k}^T\mathbf{X}_j\right)^2}{\vert|\uwave{\mathbf{X}_{S_{k-1}}\mathbf{X}_{S_{k-1}}^T\mathbf{r}^k} + \mathbf{X}_j\mathbf{X}_j^T\mathbf{r}^k\vert|_2}, ~j \in S^c \\
			\\
			\frac{\uwave{\vert|\mathbf{X}_{S_{k-1}}^T\mathbf{r}^{k}\vert|_2^2}}{\vert|\uwave{\mathbf{X}_{S_{k-1}}\mathbf{X}_{S_{k-1}}^T\mathbf{r}^k}\vert|_2} + \tau. ~j \in S.
		\end{cases} 
	\end{equation}
	Here, $\tau$ is a chosen threshold. In other words, when the effect of selecting a new feature is not sufficiently significant, the algorithm can instead continue performing gradient updates on the current support set $S$.
\end{remark}

\begin{remark}
	Naturally, the algorithms discussed in the paper have corresponding OGP-accelerated versions.
\end{remark}

\begin{remark}
	The conjugate gradient pursuit and other direction pursuit methods mentioned in \cite{blumensath2008gradient} can also be updated using the idea of Optimal Pursuit.
\end{remark}

\section{Derivation of Optimal Gradient Pursuit Criteria}\label{OGP3}
\begin{proof}
	Considering the algorithmic procedure of gradient pursuit in \cite{blumensath2008gradient}, it is necessary to take the effect of gradient updates into account, which leads to the derivation of the Optimal Gradient Pursuit criterion. The gradient on the subset $S = S_{k-1} \cup \{j\}$ is:
	\begin{equation}
		\mathbf{g}_{S} = -\mathbf{X}_{S}^T\mathbf{r}^{k}.
	\end{equation}
	The exact line search step size along the gradient direction on the support set $S$ is given by:
	\begin{equation}
		\alpha^k = \frac{\mathbf{g}_{S}^T\mathbf{g}_{S}}{\mathbf{g}_{S}^T\mathbf{X}_{S}^T\mathbf{X}_{S}\mathbf{g}_{S}}.
	\end{equation}
	Hence, considering the gradient update into account, we have:
	\begin{align*}
		\mathbf{r}^{k+1} &= \mathbf{y} - \mathbf{X}_S \left(\boldsymbol{\beta}^{k-1}\big|_S - \alpha^k \mathbf{g}_{S}\right) \\
		&= \mathbf{r}^k + \alpha^k\mathbf{X}_{S}\mathbf{g}_{S}.
	\end{align*}
	And $\mathbf{r}^{k+1} \perp \mathbf{X}_S\mathbf{g}_S$ since 
	\begin{align*}
		{\mathbf{r}^{k+1}}^T\mathbf{X}_S\mathbf{g}_S &= {\mathbf{r}^k}^T\mathbf{X}_S\mathbf{g}_S + \alpha^k\mathbf{g}_S^T\mathbf{X}_S^T\mathbf{X}_S\mathbf{g}_S \\
		& = -\mathbf{g}_S^T\mathbf{g}_S + \alpha^k\mathbf{g}_S^T\mathbf{X}_S^T\mathbf{X}_S\mathbf{g}_S  \\
		& = 0.
	\end{align*}
	Hence, $\vert|\mathbf{r}^k\vert|_2^2 - \vert|\mathbf{r}^{k+1}\vert|_2^2 = 
	\vert|\alpha^k\mathbf{X}_{S}\mathbf{g}_{S}\vert|_2^2$. The goal is to maximize the gradient update gains:
	\begin{align*} 
		\vert|\alpha^k\mathbf{X}_{S}\mathbf{g}_{S}\vert|_2 &= \left\|\frac{\mathbf{g}_{S}^T\mathbf{g}_{S}}{\mathbf{g}_{S}^T\mathbf{X}_{S}^T\mathbf{X}_{S}\mathbf{g}_{S}}\mathbf{X}_S\mathbf{g}_S \right\|_2\\
		& = \left\|\frac{{\mathbf{r}^k}^T\mathbf{X}_{S}\mathbf{X}_{S}^T\mathbf{r}^{k}}{\vert|\mathbf{X}_S\mathbf{X}_S^T\mathbf{r}^k\vert|_2^2}\mathbf{X}_S\mathbf{X}_{S}^T\mathbf{r}^{k} \right\|_2 \\
		& = \frac{{\mathbf{r}^k}^T\mathbf{X}_{S}\mathbf{X}_{S}^T\mathbf{r}^{k}}{\vert|\mathbf{X}_S\mathbf{X}_S^T\mathbf{r}^k\vert|_2} \\
		& = \frac{\uwave{\vert|\mathbf{X}_{S_{k-1}}^T\mathbf{r}^{k}\vert|_2^2} + \left({\mathbf{r}^k}^T\mathbf{X}_j\right)^2}{\vert|\uwave{\mathbf{X}_{S_{k-1}}\mathbf{X}_{S_{k-1}}^T\mathbf{r}^k} + \mathbf{X}_j\mathbf{X}_j^T\mathbf{r}^k\vert|_2}.
	\end{align*}
	The underlined part only needs to be computed once. The proof is complete.
\end{proof}

\section{Theoretical Results and Experiments of Optimal Gradient Pursuit}\label{OGP5}
	The algorithmic procedure of feature selection algorithm guided by criterion \eqref{OGP}: Optimal Gradient Pursuit, follows the similar structure as gradient pursuit in \cite{blumensath2008gradient}, with the selection criterion in gradient pursuit being meta-substituted by criterion \eqref{OGP}. We first show that Optimal Gradient Pursuit possesses similarly strong theoretical properties as Gradient Pursuit. The following theorem corresponds to Theorem 3 in \cite{blumensath2008gradient}.
	
	\begin{theorem}\label{OGP thm}
		There exists a constant $c<1$, which only depends on $\mathbf{X}$, such that the residual calculated with Optimal Gradient Pursuit decays as
		\begin{equation}
			\left\|\mathbf{r}^k\right\|_2^2 \leq c\left\|\mathbf{r}^{k-1}\right\|_2^2.
		\end{equation}
	\end{theorem}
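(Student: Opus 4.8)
The plan is to mirror the classical geometric-decay proof for greedy pursuits (Theorem~3 of \cite{blumensath2008gradient}), tailored to the selection rule \eqref{OGP}. The first step is to invoke the exact line-search identity derived in Appendix~\ref{OGP3}: if at an iteration with current support $S_{k-1}$ the index $j$ is appended and an exact-step gradient update is carried out on $S:=S_{k-1}\cup\{j\}$, then the squared residual decreases by exactly
\[
\|\mathbf{r}^{k}\|_2^2-\|\mathbf{r}^{k+1}\|_2^2 \;=\; \frac{\|\mathbf{X}_S^T\mathbf{r}^k\|_2^4}{\|\mathbf{X}_S\mathbf{X}_S^T\mathbf{r}^k\|_2^2}\;=:\;D_{k,j}.
\]
Since $j\notin S_{k-1}$ we have $\|\mathbf{X}_S^T\mathbf{r}^k\|_2^2=\|\mathbf{X}_{S_{k-1}}^T\mathbf{r}^k\|_2^2+({\mathbf{r}^k}^T\mathbf{X}_j)^2$, and $\sqrt{D_{k,j}}$ is exactly the expression that criterion \eqref{OGP} maximizes over $j\in S_{k-1}^c$. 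Hence it suffices to exhibit a constant $c\in(0,1)$ depending only on $\mathbf{X}$ such that $D_k:=\max_{j\in S_{k-1}^c}D_{k,j}\ge (1-c)\|\mathbf{r}^k\|_2^2$ at every iteration; then $\|\mathbf{r}^{k+1}\|_2^2\le c\,\|\mathbf{r}^k\|_2^2$ follows at once (re-indexing, this is the passage from $\mathbf{r}^{k-1}$ to $\mathbf{r}^{k}$ in the statement).

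Next I would bound a single $D_{k,j}$ from below. Because $S$ indexes a column submatrix of $\mathbf{X}$, $\|\mathbf{X}_S\mathbf{X}_S^T\mathbf{r}^k\|_2\le\|\mathbf{X}_S\|_2\,\|\mathbf{X}_S^T\mathbf{r}^k\|_2\le\|\mathbf{X}\|_2\,\|\mathbf{X}_S^T\mathbf{r}^k\|_2$, so $D_{k,j}\ge\|\mathbf{X}_S^T\mathbf{r}^k\|_2^2/\|\mathbf{X}\|_2^2=\big(\|\mathbf{X}_{S_{k-1}}^T\mathbf{r}^k\|_2^2+({\mathbf{r}^k}^T\mathbf{X}_j)^2\big)/\|\mathbf{X}\|_2^2$. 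Maximizing over $j\in S_{k-1}^c$ and using $\|\mathbf{X}_{S_{k-1}}^T\mathbf{r}^k\|_2^2=\sum_{i\in S_{k-1}}({\mathbf{r}^k}^T\mathbf{X}_i)^2\ge\max_{i\in S_{k-1}}({\mathbf{r}^k}^T\mathbf{X}_i)^2$, the numerator bracket dominates $\max_{i\in\{1,\dots,p\}}({\mathbf{r}^k}^T\mathbf{X}_i)^2=\|\mathbf{X}^T\mathbf{r}^k\|_\infty^2$, whence $D_k\ge\|\mathbf{X}^T\mathbf{r}^k\|_\infty^2/\|\mathbf{X}\|_2^2$. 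This is the one genuinely delicate point: since OGP only searches over $S_{k-1}^c$, I must keep the term $\|\mathbf{X}_{S_{k-1}}^T\mathbf{r}^k\|_2^2$ in the numerator (rather than discarding it and being left with $\max_{j\in S^c}({\mathbf{r}^k}^T\mathbf{X}_j)^2$, which is worthless when $\mathbf{r}^k$ happens to be strongly correlated with the current support) in order to recover the full-dictionary $\ell_\infty$ norm.

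The last step turns $\|\mathbf{X}^T\mathbf{r}^k\|_\infty$ into $\|\mathbf{r}^k\|_2$ via completeness of $\mathbf{X}$, i.e.\ that $\mathbf{X}$ has full row rank $n$ — automatic in the compressed-sensing regime $n<p$ with a generic/random $\mathbf{X}$, and implicitly assumed by a statement of the form ``$c$ depends only on $\mathbf{X}$'' (without it, the component of $\mathbf{r}^k$ orthogonal to $\operatorname{range}(\mathbf{X})$ never shrinks and geometric decay cannot hold). Then $\|\mathbf{X}^T\mathbf{r}^k\|_\infty^2\ge\tfrac1p\|\mathbf{X}^T\mathbf{r}^k\|_2^2=\tfrac1p{\mathbf{r}^k}^T(\mathbf{X}\mathbf{X}^T)\mathbf{r}^k\ge\tfrac1p\lambda_{\min}(\mathbf{X}\mathbf{X}^T)\|\mathbf{r}^k\|_2^2$ with $\lambda_{\min}(\mathbf{X}\mathbf{X}^T)>0$. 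Combining the three displays yields $\|\mathbf{r}^{k+1}\|_2^2=\|\mathbf{r}^k\|_2^2-D_k\le\big(1-\lambda_{\min}(\mathbf{X}\mathbf{X}^T)/(p\|\mathbf{X}\|_2^2)\big)\|\mathbf{r}^k\|_2^2$, so the theorem holds with $c=1-\lambda_{\min}(\mathbf{X}\mathbf{X}^T)/(p\|\mathbf{X}\|_2^2)\in(0,1)$, a quantity determined solely by $\mathbf{X}$. Aside from the $S^c$-restriction bookkeeping above, the only remaining thing needing care is checking that the exact-step identity of Appendix~\ref{OGP3} is precisely what \eqref{OGP} optimizes, so that the maximized $D_k$ really is the algorithm's one-step residual reduction.
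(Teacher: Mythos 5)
Your proof is correct, but it takes a genuinely different route from the paper's. The paper proves \cref{OGP thm} by a two-line comparison argument: starting from the same residual and support, the index chosen by criterion \eqref{OGP} maximizes the exact one-step residual reduction over all candidates, and in particular does at least as well as the index Gradient Pursuit would choose; since Theorem~3 of \cite{blumensath2008gradient} gives $\|\mathbf{r}_{\mathrm{GP}}^{k}\|_2^2\le c\,\|\mathbf{r}^{k-1}\|_2^2$ with a bound whose derivation is independent of the iteration history, OGP inherits the same constant $c$. You instead re-derive geometric decay from scratch: you identify the one-step reduction $D_{k,j}=\|\mathbf{X}_S^T\mathbf{r}^k\|_2^4/\|\mathbf{X}_S\mathbf{X}_S^T\mathbf{r}^k\|_2^2$ as the square of the quantity \eqref{OGP} maximizes, lower-bound it by $\|\mathbf{X}^T\mathbf{r}^k\|_\infty^2/\|\mathbf{X}\|_2^2$ (correctly noting that retaining the $\|\mathbf{X}_{S_{k-1}}^T\mathbf{r}^k\|_2^2$ term is what lets the maximum over $S_{k-1}^c$ recover the full-dictionary $\ell_\infty$ norm), and close with $\lambda_{\min}(\mathbf{X}\mathbf{X}^T)>0$. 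What each buys: the paper's argument is shorter and yields exactly the GP constant, at the price of leaning on the external theorem and on the (briefly justified) claim that its per-step bound applies from an arbitrary state; your argument is self-contained, makes the full-row-rank hypothesis explicit rather than implicit, and produces a concrete constant $c=1-\lambda_{\min}(\mathbf{X}\mathbf{X}^T)/(p\,\|\mathbf{X}\|_2^2)$, though the factor $1/p$ from bounding $\|\cdot\|_\infty$ by $\|\cdot\|_2$ makes this constant looser than the dictionary-dependent constant one would get from the sharper quantity $\inf\{\|\mathbf{X}^T\mathbf{r}\|_\infty:\|\mathbf{r}\|_2=1\}$ used in the classical pursuit analyses. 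Both proofs are valid.
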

	
	\begin{proof}
		For the same $\mathbf{r}^{k-1}$, the residual at step $k$ of OGP and GP satisfies:
		\begin{equation*}
			\left\|\mathbf{r}_{\text{OGP}}^k\right\|_2^2 \leq \left\|\mathbf{r}_{\text{GP}}^{k}\right\|_2^2
		\end{equation*}
		by the derivation of OGP criterion \eqref{OGP} in \cref{OGP3}. By Theorem 3 in \cite{blumensath2008gradient},
		\begin{equation*}
			\left\|\mathbf{r}_{\text{GP}}^k\right\|_2^2 \leq c\left\|\mathbf{r}^{k-1}\right\|_2^2.
		\end{equation*}
		The derivation of this theorem is independent of the historical iteration steps. Hence for Optimal Gradient Pursuit, 
		\begin{equation*}
			\left\|\mathbf{r}^k\right\|_2^2 \leq c\left\|\mathbf{r}^{k-1}\right\|_2^2.
		\end{equation*}
		The proof is complete.
	\end{proof}
	
	\textbf{Experiment on Residual Convergence:}
	
	Experimental result on residual convergence of Optimal Gradient Pursuit and Gradient Pursuit is shown in \cref{OGP plot1}. Optimal Gradient Pursuit exhibits better residual convergence compared to Gradient Pursuit on different test cases. This is also consistent with the results presented in the Appendix \ref{OGP3} and Theorem \ref{OGP thm}.
	
	\begin{figure*}[ht]
		\begin{center}
			\centerline{\includegraphics[width=1\columnwidth]{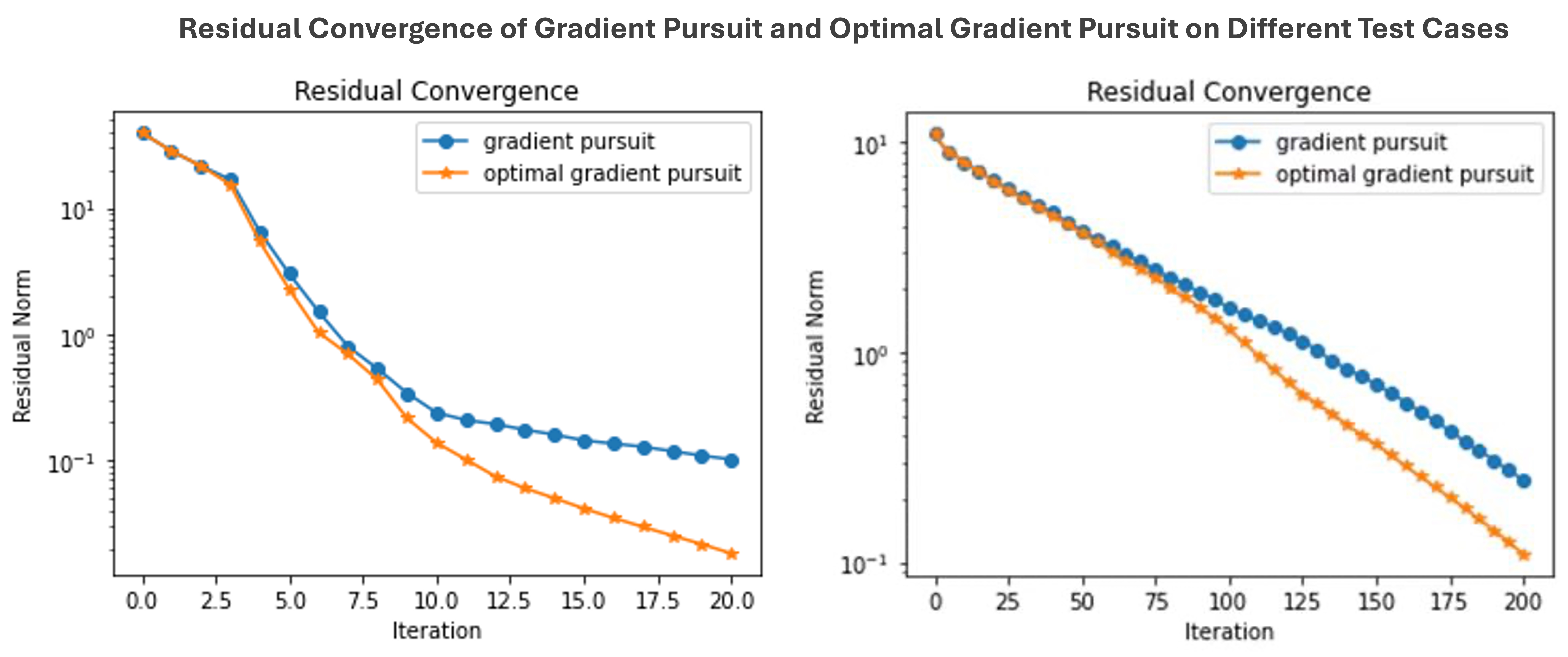}}
			\vspace{-2mm}
			\caption{
				Residual convergence of Optimal Gradient Pursuit and Gradient Pursuit.}
			\label{OGP plot1}
		\end{center}
		\vskip -0.3in
	\end{figure*}
	
	\textbf{Experiment on Computation Time:}
	
	We further compared the runtime of GP and OGP, as shown in \cref{OGP plot2}.
	
	\begin{figure*}[ht]
		\begin{center}
			\centerline{\includegraphics[width=0.5\columnwidth]{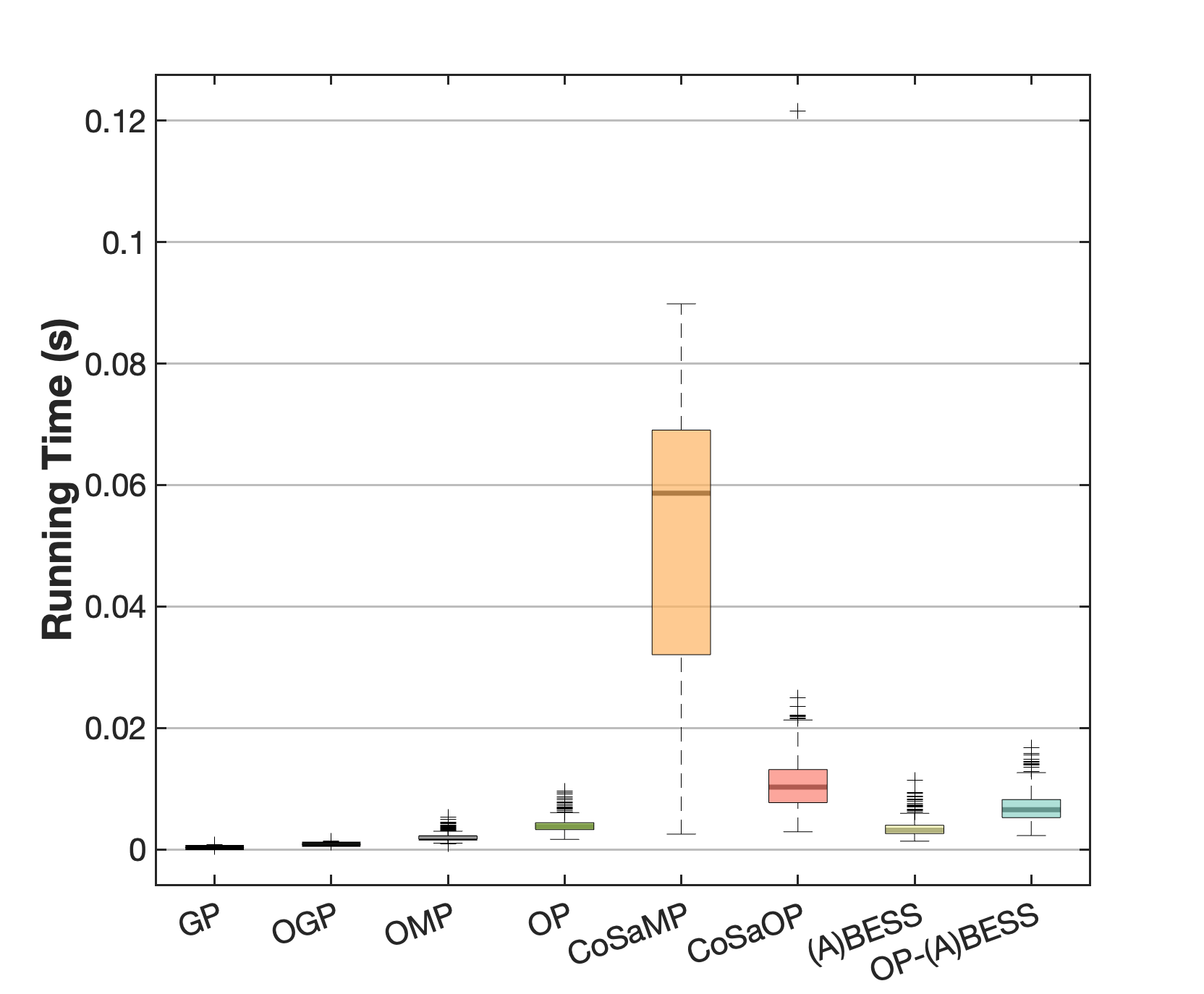}}
			\vspace{-2mm}
			\caption{
				Running time for each algorithm over 500 independent runs.}
			\label{OGP plot2}
		\end{center}
		\vskip -0.3in
	\end{figure*}
	
	Both methods achieve an order-of-magnitude speedup compared to the least squares-based subset selection approach. For ultra-high-dimensional features, OGP provides an efficient acceleration scheme for the optimal pursuit strategy. Specifically, since gradients can be easily computed for general functions, the OGP method can be extended to general objective functions, offering a promising direction for further expanding the applicability of optimal pursuit in future research.
	
\section{Optimal Pursuit for Column Subset Selection}\label{CSS}
	\textbf{Optimal Pursuit Criteria for Column  Subset Selection:}
	
	We can follow the procedure in Section \ref{optimal} to construct the feature selection and elimination subproblems for column subset selection. By solving them using approaches as those in Appendices \ref{App: p1=p2}–\ref{App: eliminate eq}, we obtain the Optimal Pursuit feature selection and elimination criteria for the column subset selection problem as follows:
	\begin{align*}
		&\text{\bfseries Selection:} \quad~~~ \mathop{\argmax}\limits_{j\in S_{k-1}^c} \frac{\vert|{\mathbf{R}^k}^T \mathbf{X}_j\vert|^2}{\mathbf{X}_j^T\left(\mathbf{I}-\mathbf{X}_{S_{k-1}}\left(\mathbf{X}_{S_{k-1}}^T \mathbf{X}_{S_{k-1}}\right)^{-1}\mathbf{X}_{S_{k-1}}^T\right)\mathbf{X}_j}.\\
		&\text{\bfseries Elimination:} ~~\mathop{\argmax}\limits_{j \in S_{k-1}}~~\text{trace}\left(\mathbf{X}^T \mathbf{X}_{S_{k-1}} \left( \mathbf{I} - \mathbf{e}_j \mathbf{e}_j^T \right) 
		\bigg( \mathbf{C}_{k-1} - 
		\frac{\mathbf{C}_{k-1} \mathbf{e}_j \mathbf{e}_j^T \mathbf{C}_{k-1}}{\mathbf{e}_j^T \mathbf{C}_{k-1} \mathbf{e}_j} \bigg) 
		\left(\mathbf{I} - \mathbf{e}_j \mathbf{e}_j^T \right) \mathbf{X}_{S_{k-1}}^T \mathbf{X}\right).
	\end{align*}
	where $\mathbf{R}^k = \mathbf{X} - \mathbf{X}_{S_{k-1}}\mathbf{B}^{k-1}$ serves as the current residual. The definitions of $\mathbf{e}_j$ and $\mathbf{C}_{k-1}$ are the same as those in Theorem \ref{M2}. 
	
	\textbf{Classic Criteria for Column  Subset Selection:}
	
	The classic criteria can also be similarly generalized to column subset selection problem by constructing subproblems as \eqref{P0} and \eqref{Q0} in Section \ref{revisit}. Specifically:
	\begin{align*}
		&\text{\bfseries Selection:} \quad~~~ \mathop{\argmax}\limits_{j\in S_{k-1}^c} \frac{\vert|{\mathbf{R}^k}^T \mathbf{X}_j\vert|^2}{\mathbf{X}_j^T\mathbf{X}_j}.~~~~~~~~~~~~~~~~~~~~~~~~~~~~~~~~~~~~~~~~~~~~~~~~~~~~~~~~~~~~~~~~~~~~~~~~~~~~~~~~~~~~~~~~~~~~~~~~~~~~~~~~~~~~~~~~~~~~\\
		&\text{\bfseries Elimination:} ~~\mathop{\argmin}\limits_{j \in S_{k-1}}~~\vert|\mathbf{X}_j\vert|_2^2\cdot\vert|\mathbf{B}^{k-1}[i,:]\vert|_2^2.
	\end{align*}
	where $\mathbf{B}^{k-1}[i,:]$ is the $i$-th row of $\mathbf{B}^{k-1}$, and \( j \) (in elimination) represents the \( i \)-th element of \( S_{k-1} \) for \( i = 1, 2, \dots, \vert S_{k-1} \vert \).
	
	\begin{remark}
		It is important to note that the results discussed in the main text remain valid for the criteria on column subset selection problem. \textbf{The computational complexity of the Optimal Pursuit feature selection and elimination criteria is of the same order as that of the classical criteria.}
	\end{remark}
	
	\textbf{Experiment on Column Subset Selection:}
	
	The experiment setting is:
	\begin{itemize}
		\item \textbf{Datasets:} We conduct experiments on eight standard grayscale 256×256 image datasets compiled from two sources \footnote{Available at \url{http://dsp.rice.edu/software/DAMP-toolbox} and \url{http://see.xidian.edu.cn/faculty/wsdong/NLR_Exps.htm}}.
		\item  \textbf{Baselines:} We select baseline algorithm for CSS - the leverage score method. This approach relies on the right singular vectors from SVD decomposition; the more singular vectors used, the better the selection performance, but at the cost of more redundant SVD computations.
		\item  \textbf{Evaluation Metrics:} We use $\|\mathbf{X} - \mathbf{X}_S\mathbf{B}\|_F/\|\mathbf{X}\|_F$ as the evaluation metrics.
	\end{itemize}
	
	The experimental results is shown in \cref{CSS1}. In the table, SVD-128/256 refers to CSS based on leverage scores computed using 128/256 singular vectors. In the last column of the table, we provide the bound given by the optimal rank-$K$ approximation from SVD.
	
	\begin{table*}[htbp]
		\centering
		\caption{Column Subset Selection. 
			$\vert|\mathbf{X}-\mathbf{X}_S\mathbf{B} \vert|_F/\vert|\mathbf{X}\vert|_F$ are shown below, with the best in \best{bold} and the second-best \second{underlined}. SVD-128/256 refers to CSS based on leverage scores computed using 128/256 singular vectors (the more singular vectors used, the better the selection performance, but at the cost of more redundant SVD computations.). In the last column of the table, we provide the bound given by the optimal rank-K approximation from SVD.}
		\vspace{2mm}
		\label{tab:results}
		\resizebox{\textwidth}{!}{%
			\begin{tabular}{M{1.7cm} M{1.0cm}|M{1.5cm}M{1.5cm}|M{1.5cm}M{1.5cm}|M{1.5cm}M{1.5cm}|M{2.5cm}M{2.5cm}|M{1.5cm}}
				\specialrule{1pt}{0pt}{0pt} 
				\hline
				\vspace{1.5mm}
				{\bfseries Dataset} & \vspace{1.5mm}{\bfseries $K$} &\vspace{1.5mm} {\bfseries OMP} &\vspace{1.5mm} {\bfseries OP} & \vspace{1.5mm}{\bfseries CoSaMP} & \vspace{1.5mm}{\bfseries CoSaOP} & \vspace{1.5mm}{\bfseries (A)BESS} & \vspace{1.5mm}{\bfseries OP-(A)BESS} &\vspace{1.5mm} {\bfseries \shortstack{Leverage Score\\(SVD-128)}} &\vspace{1.5mm} {\bfseries \shortstack{Leverage Score\\(SVD-256)}} & \vspace{1.5mm}{\bfseries Bound by SVD}\\
				\midrule
				\multirow{5}{*}{\bfseries Mornach} 
				& 3  & 0.3881 & \second{0.3794} & 0.3945 & 0.3979 & 0.3884 & \best{0.3743} & 0.4262 & 0.4400 & 0.3330 \\
				& 5  & 0.3521 & \second{0.3422} & 0.4193 & 0.3820 & 0.3662 & \best{0.3399} & 0.3966 & 0.3814 & 0.2971 \\
				& 10 & 0.2953 & \second{0.2758} & 0.4246 & 0.3275 & 0.3436 & \best{0.2771} & 0.3459 & 0.3079 & 0.2268 \\
				& 15 & 0.2466 & \second{0.2330} & 0.3951 & 0.2468 & 0.3285 & \best{0.2318} & 0.2845 & 0.2770 & 0.1804 \\
				& 20 & 0.2074 & \second{0.1999} & 0.3781 & 0.2150 & 0.2769 & \best{0.1981} & 0.2648 & 0.2419 & 0.1495 \\
				\midrule
				\multirow{5}{*}{\bfseries Barbara} 
				& 3  & 0.2823 & \second{0.2520} & 0.3256 & 0.2571 & 0.2831 & \best{0.2505} & 0.3284 & 0.2981 & 0.2296 \\
				& 5  & 0.2403 & \best{0.2173} & 0.3331 & 0.2303 & 0.2560 & \second{0.2179} & 0.3146 & 0.2610 & 0.1882 \\
				& 10 & 0.1831 & \second{0.1743} & 0.3152 & 0.1896 & 0.2437 & \best{0.1730} & 0.2622 & 0.2197 & 0.1431 \\
				& 15 & 0.1577 & \best{0.1458} & 0.3023 & 0.1576 & 0.2306 & \second{0.1468} & 0.2321 & 0.1847 & 0.1147 \\
				& 20 & 0.1337 & \second{0.1273} & 0.2871 & 0.1405 & 0.2242 & \best{0.1263} & 0.2156 & 0.1585 & 0.0983 \\
				\midrule
				\multirow{5}{*}{\bfseries Boats} 
				& 3  & {0.2589} & \second{0.2453} & 0.3333 & 0.2589 & 0.2741 & \best{0.2435} & 0.3139 & 0.2995 & 0.2077 \\
				& 5  & 0.2219 & \second{0.2062} & 0.3149 & 0.2176 & 0.2676 & \best{0.2045} & 0.2725 & 0.2640 & 0.1688 \\
				& 10 & 0.1602 & \best{0.1575} & 0.2961 & 0.1747 & 0.2327 & \second{0.1578} & 0.2159 & 0.2004 & 0.1293 \\
				& 15 & 0.1336 & \second{0.1330} & 0.2779 & 0.1527 & 0.2187 & \best{0.1311} & 0.2015 & 0.1625 & 0.1050 \\
				& 20 & \second{0.1148} & \second{0.1148} & 0.2460 & 0.1262 & 0.1549 & \best{0.1131} & 0.1810 & 0.1436 & 0.0878 \\
				\midrule
				\multirow{5}{*}{\bfseries House}
				& 3  & \second{0.1983} & \best{0.1940} & 0.2658 & 0.2072 & 0.2097 & \best{0.1940} & 0.2418 & 0.2014 & 0.1677 \\
				& 5  & 0.1671         & \second{0.1618} & 0.2581 & 0.1988 & 0.2020 & \best{0.1617} & 0.2311 & 0.1835 & 0.1378 \\
				& 10 & 0.1286         & \second{0.1219} & 0.2564 & 0.1386 & 0.1878 & \best{0.1195} & 0.1927 & 0.1552 & 0.0977 \\
				& 15 & 0.1019         & \second{0.0969} & 0.2067 & 0.1086 & 0.1793 & \best{0.0963} & 0.1502 & 0.1331 & 0.0762 \\
				& 20 & \second{0.0785} & \second{0.0785} & 0.2331 & 0.0968 & 0.1740 & \best{0.0774} & 0.1332 & 0.1158 & 0.0606 \\
				\midrule
				\multirow{5}{*}{\bfseries Lena} 
				& 3  & 0.2629 & \best{0.2572} & 0.2936 & 0.2951 & 0.2675 & \second{0.2575} & 0.3748 & 0.3722 & 0.2368 \\
				& 5  & 0.2328 & \best{0.2234} & 0.2835 & 0.2430 & 0.2616 & \second{0.2255} & 0.3511 & 0.2977 & 0.1987 \\
				& 10 & 0.1830 & \second{0.1731} & 0.2592 & 0.1925 & 0.2313 & \best{0.1695} & 0.3362 & 0.2330 & 0.1414 \\
				& 15 & \second{0.1457} & \best{0.1445} & 0.2430 & 0.1496 & 0.1821 & 0.1592 & 0.3147 & 0.1795 & 0.1170 \\
				& 20 & 0.1301 & \second{0.1257} & 0.2306 & 0.1321 & 0.1867 & \best{0.1256} & 0.3072 & 0.1710 & 0.0986 \\
				\midrule
				\multirow{5}{*}{\bfseries Parrot} 
				& 3  & 0.2460 & \best{0.2383} & 0.2953 & 0.2634 & 0.2524 & \second{0.2411} & 0.4502 & 0.3177 & 0.2000 \\
				& 5  & 0.2109 & 0.2049& 0.2914 & \second{0.1978} & 0.2498 & \best{0.1965} & 0.3146 & 0.2546 & 0.1660 \\
				& 10 & 0.1598 & \second{0.1547} & 0.2645 & 0.1609 & 0.2452 & \best{0.1502} & 0.2682 & 0.2163 & 0.1237 \\
				& 15 & {0.1377} & \best{0.1239} & {0.2551} & \second{0.1362} & 0.2334 & \best{0.1239} & 0.2606 & 0.1657 & 0.0980 \\
				& 20 & 0.1163 & \best{0.1054} & 0.2485 & 0.1239 & 0.2213 & \second{0.1062} & 0.2186 & 0.1507 & 0.0812 \\
				\midrule
				\multirow{5}{*}{\bfseries Cameraman} 
				& 3  & \second{0.2749} & 0.2767 & 0.3727 & 0.2825 & 0.3041 & \best{0.2747} & 0.4385 & 0.5908 & 0.2433 \\
				& 5  & 0.2456 & \best{0.2443} & 0.3684 & 0.2567 & 0.3019 & \second{0.2449} & 0.3730 & 0.3494 & 0.2091 \\
				& 10 & 0.2017 & \second{0.1953} & 0.3596 & 0.2282 & 0.2958 & \best{0.1945} & 0.3298 & 0.2348 & 0.1639 \\
				& 15 & 0.1729 & \second{0.1691} & 0.3527 & 0.1799 & 0.1835 & \best{0.1680} & 0.3035 & 0.2231 & 0.1347 \\
				& 20 & 0.1548 & \second{0.1488} & 0.3449 & 0.1732 & 0.1805 & \best{0.1480} & 0.2922 & 0.1966 & 0.1143 \\
				\midrule
				\multirow{5}{*}{\bfseries Foreman} 
				& 3  & 0.1923 & \second{0.1911} & 0.2751 & 0.2154 & 0.2203 & \best{0.1892} & 0.2870 & 0.2363 & 0.1465 \\
				& 5  & 0.1473 & \second{0.1457} & 0.2675 & 0.2183 & 0.2190 & \best{0.1438} & 0.2578 & 0.1683 & 0.1154 \\
				& 10 & 0.1060 & \best{0.1000} & 0.2564 & 0.1140 & 0.2059 & \second{0.1017} & 0.2273 & 0.1419 & 0.0771 \\
				& 15 & \best{0.0792} & \second{0.0805} & 0.2484 & 0.1009 & 0.1966 & \best{0.0792} & 0.2221 & 0.1293 & 0.0593 \\
				& 20 & 0.0669 & \second{0.0667} & 0.2463 & 0.1025 & 0.1874 & \best{0.0658} & 0.2175 & 0.0866 & 0.0501 \\
				\specialrule{1pt}{0pt}{0pt} 
			\end{tabular}%
		}
		\label{CSS1}
	\end{table*}
	
	The experimental results show that the enhanced algorithm achieves a significantly better performance than the original algorithm in the CSS task. \textbf{The original algorithm generally outperforms the leverage score methods (SVD-128), while the enhanced algorithm consistently surpasses the leverage score method (SVD-256).}  
	
	Notably, \textbf{OP-(A)BESS achieves SOTA performance on this task, with approximation errors approaching the optimal bound given by SVD.} This further highlights the substantial advantage of the new criteria on the CSS task.
	
\section{Complex Signal Processing}\label{CSP}

In this task, we tested a 128-dimensional complex signal with 20 frequency components, applying the BSS algorithm to estimate the frequency components on an $\boldsymbol{2\times}$ oversampled Fourier domain. We used two evaluation metrics: cosine similarity (Corr) to assess the accuracy of amplitude recovery (higher is better) and Complementary Cumulative Distribution Function (CCDF) to measure frequency estimation error probability (lower is better). 

By the high correlation between nearby frequency components, this also constitutes a test scenario with highly correlated features. To evaluate algorithm performance, the target signal is constructed with closely spaced frequency components, resulting in high correlation among features in the true subset. The frequency domain visualization, radar plot, and the performance of these two metrics are shown in \cref{CSP1}.


\begin{figure*}[ht]
	\begin{center}
		\centerline{\includegraphics[width=1.1\columnwidth]{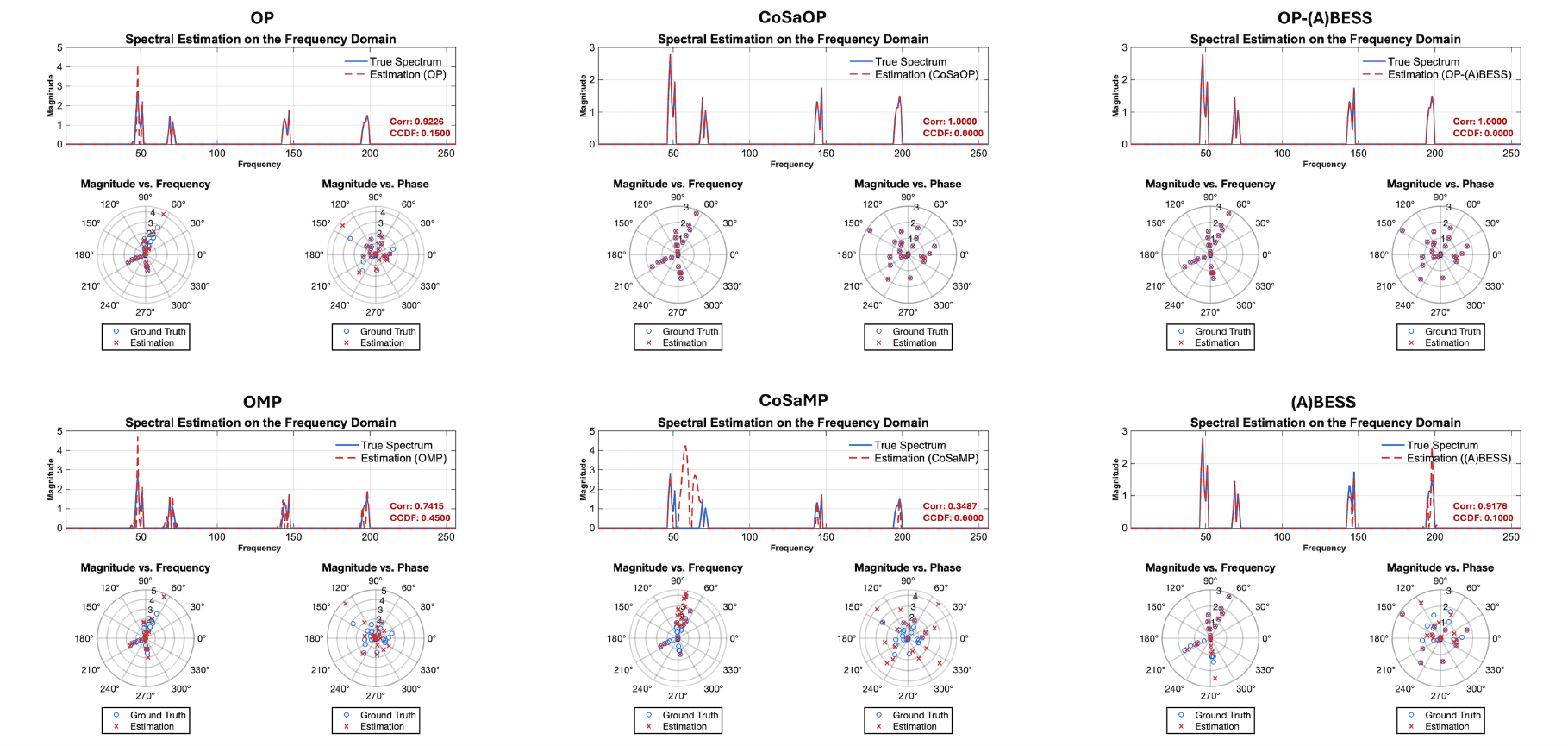}}
		\vspace{-2mm}
		\caption{
			Comparison of Optimal Pursuit enhanced algorithms and classic algorithms on complex signal processing.}
		\label{CSP1}
	\end{center}
	\vskip -0.3in
\end{figure*}

From the experimental results, it is evident that the enhanced algorithm achieves significantly lower frequency estimation error probability and higher correlation in the line spectrum estimation task. Notably, OP-(A)BESS and CoSaOP achieve perfect estimation in this task.  

Since features in the line spectrum estimation problem are also highly correlated, these results further validate the substantial advantage of our proposed criteria in scenarios with high feature correlation.

\end{document}